\newcommand{\longtwoheadrightarrow}{\ensuremath{\relbar\joinrel\twoheadrightarrow}}
\DeclareMathOperator{\lcm}{lcm}
\newcommand{\norm}[1]{\left\lVert#1\right\rVert}
\DeclareMathOperator{\Hom}{Hom}
\DeclareMathOperator{\ord}{ord}
\DeclareMathOperator{\Stab}{Stab}
\DeclareMathOperator{\Ker}{Ker}
\DeclareMathOperator{\codim}{codim}
\DeclareMathOperator{\Gal}{Gal}
\DeclareMathOperator{\Id}{Id}
\DeclareMathOperator{\card}{card}
\DeclareMathOperator{\GL}{GL}
\DeclareMathOperator{\supp}{supp}
\DeclareMathOperator{\conv}{conv}
\DeclareMathOperator{\vol}{vol}
\DeclareMathOperator{\diam}{diam}
\newcommand{\Gm}{\mathbb{G}_{\textrm{m}}}
\newcommand{\Qab}{\mathbb{Q}^{\textrm{ab}}}
\newcommand{\Vtors}{\overline{V_{\textrm{tors}}}}
\newcommand{\Wtors}{\overline{\Wtorss}}
\newcommand{\Vtorss}{V_{\textrm{tors}}}
\newcommand{\Wtorss}{W_{\textrm{tors}}}
\newcommand{\bfeta}{\boldsymbol{\eta}}
\newcommand{\bftau}{\boldsymbol{\tau}}
\newcommand{\bfxi}{\boldsymbol{\xi}}
\newcommand{\bfomega}{\boldsymbol{\omega}}
\newcommand{\bflambda}{\boldsymbol{\lambda}}
\newcommand{\bfa}{\boldsymbol{a}}
\newcommand{\bfe}{\boldsymbol{e}}
\newcommand{\bfg}{\boldsymbol{g}}
\newcommand{\bft}{\boldsymbol{t}}
\newcommand{\bfv}{\boldsymbol{v}}
\newcommand{\bfx}{\boldsymbol{x}}
\newcommand{\bfy}{\boldsymbol{y}}
\theoremstyle{plain}
\newtheorem{theorem}{Theorem}[section]
\newtheorem{proposition}[theorem]{Proposition}
\newtheorem{corollary}[theorem]{Corollary}
\newtheorem{lemma}[theorem]{Lemma}
\newtheorem{conjecture}[theorem]{Conjecture}
\newtheorem*{proposition*}{Proposition}
\newtheorem*{theorem13}{Theorem 1.3}
\newtheorem*{theorem14}{Theorem 1.4}
\theoremstyle{definition}
\newtheorem*{example}{Example}
\newtheorem*{remark}{Remark}
\newtheorem{Remark}[theorem]{Remark}
\begin{document}

\title{The number of maximal torsion cosets in subvarieties of tori}

\author{César Martínez}
\address{Laboratoire de math\'ematiques Nicolas Oresme, CNRS UMR 6139, Universit\'e de Caen. BP 5186, 14032 Caen Cedex, France}
\email{cesar.martinez-metzmeier@unicaen.fr}

\date{\today}
\subjclass[2010]{Primary 11G35; Secondary 14G25.}
\keywords{torsion cosets}

\thanks{This research was partially financed by the CNRS project PICS 6381 ``G\'eom\'etrie diophantienne et calcul formel", and the Spanish project MINECO MTM2012-38122-C03-02.}

\begin{abstract}
We present sharp bounds on the number of maximal torsion cosets in a subvariety of the complex algebraic torus $\Gm^n$.
Our first main result gives a bound in terms of the degree of the defining polynomials.
A second result gives a bound in terms of the toric degree of the subvariety.

As a consequence, we prove the conjectures of Ruppert and of Aliev and Smyth on the number of isolated torsion points of a hypersurface. These conjectures bound this number in terms of the multidegree and the volume of the Newton polytope of a polynomial defining the hypersurface, respectively.
\end{abstract}

\maketitle



\section{Introduction}

Let $\Gm^n=(\mathbb{C}^\times)^n$ be the complex algebraic torus of dimension~$n$.
A torsion point of~$\Gm^n$ is an $n$-tuple of roots of unity.
Given $V$ a subvariety of $\Gm^n$, we call $\Vtorss$ the set of torsion points contained in $V$ and we denote by $\Vtors$ its Zariski closure.

The toric Manin-Mumford conjecture states that
$\Vtors$ is a finite union of torsion cosets, that is translates by torsion points of algebraic subtori of $\Gm^n$.
This was proved by Ihara, Serre and Tate for $\dim (V)=1$ \cite[Theorem 6.1]{Lan83} and by Laurent for higher dimensions \cite[Théorème 2]{Lau84}.

In this article, we focus on finding a sharp upper bound for the number of maximal torsion cosets in $V$ and their degrees.
It was already proved by Laurent \cite{Lau84}
that, if $V$ is defined over a number field $\mathbb{K}$ by a set of polynomials of degree at most $\delta$
and height at most $\eta$,
the number of maximal torsion cosets in $V$ and their degree
is effectively bounded in terms of $n$, $\delta$, $\eta$ and $[\mathbb{K}:\mathbb{Q}]$.
Later, Bombieri and Zannier \cite{BZ95}, following the work of Zhang \cite{Zha95}, showed that both the number of maximal torsion cosets and the degree of their defining polynomials can be bounded just in terms of $n$ and $\delta$.

Furthermore, Schmidt \cite{Sch96} obtained an explicit upper bound for the number of maximal torsion cosets in $V$.
Combined with a result of Evertse \cite{Eve99}, he bounds the number of maximal torsion cosets by
\[
(11\delta)^{n^2}
\left(\begin{matrix}
n + \delta\\
\delta
\end{matrix}\right)^{3
\left(\begin{smallmatrix}
n + \delta\\
\delta
\end{smallmatrix}\right)^2}
\mbox{.}
\]

Using a different approach, Ruppert \cite{Rup93} presented an algorithm to determine the torsion cosets of a variety $V\subset\Gm^n$.
Ruppert's approach treats first the case $\dim(V)=1$
where, given $(d_1,\ldots,d_n)$ the multidegree of $V\subset (\mathbb{P}^1)^n$,
he is able to bound the number of isolated torsion points in $V$ by $22\max (d_i)\min (d_i)$.
Afterwards, he extends his algorithm to some specific varieties in higher dimension,
which leads him to formulate the following conjecture:

\begin{conjecture}[Ruppert]\label{ConjR}
Let $f\in\mathbb{C}[X_1,\ldots,X_n]$ of multidegree $(d_1,\ldots,d_n)$.
The number of isolated torsion points on $Z(f)\subset\Gm^n$ is bounded by $c_n d_1\cdots d_n$,
where $c_n$ is a constant depending only on $n$.
\end{conjecture}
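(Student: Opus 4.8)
The plan is to deduce Conjecture~\ref{ConjR} from the general bound on the number of maximal torsion cosets of a subvariety of $\Gm^n$ (the one phrased in terms of its toric degree), applied to the hypersurface $V=Z(f)$, and then to bound the toric degree of $Z(f)$ by a constant depending only on $n$ times $d_1\cdots d_n$. The first, essentially formal, observation is that the isolated torsion points of $Z(f)$ are exactly the zero-dimensional maximal torsion cosets contained in $Z(f)$: a torsion point $\zeta\in Z(f)$ fails to be isolated precisely when it lies on a positive-dimensional torsion coset contained in $\Vtors\subseteq Z(f)$, i.e.\ when $\{\zeta\}$ is not maximal among the torsion cosets of $V$. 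Hence the number of isolated torsion points of $Z(f)$ is at most the total number of maximal torsion cosets of $V=Z(f)$, and it suffices to apply the main theorem to $V$. (We may assume $f$ squarefree, replacing it by its radical if needed; this only shrinks the Newton polytope that appears below.)

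Next I would bound the toric degree of $Z(f)$. The key point is that for a hypersurface this degree is, up to a factor depending only on $n$, the volume of the Newton polytope $P$ of $f$; and since $f$ has multidegree $(d_1,\ldots,d_n)$, the polytope $P$ lies in the box $\prod_{i=1}^{n}[0,d_i]$, so $\vol(P)\le d_1\cdots d_n$. Combining this estimate for the toric degree of $Z(f)$ with the main theorem — and using that the bound there is \emph{linear} in the toric degree, with constant depending on $n$ alone — produces a bound of the shape $c_n\,d_1\cdots d_n$ for the number of maximal torsion cosets of $Z(f)$, hence for its number of isolated torsion points; this is Conjecture~\ref{ConjR}. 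Carrying out the same computation with $\vol(P)$ in place of $d_1\cdots d_n$ yields the sharper statement conjectured by Aliev and Smyth, from which Ruppert's conjecture in turn follows via $\vol(P)\le d_1\cdots d_n$.

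So the reduction itself is routine, and the difficulty is concentrated in the underlying theorem: one really needs the number of maximal torsion cosets of $V$ to grow at most \emph{linearly} with the toric degree. The companion bound in terms of the total degree $\delta$ of the defining equations is of no use here, since it is super-linear in $\delta$ — for $n\ge 2$ already a polynomial of multidegree $(\lfloor\delta/2\rfloor,\lfloor\delta/2\rfloor)$ can have on the order of $\delta^2$ isolated torsion points — and would only give something like $c_n(d_1+\cdots+d_n)^{k}$ with $k>1$, which is far too large when the $d_i$ are unbalanced, e.g.\ of the form $(D,1,\ldots,1)$. One must also ensure that the constant is independent of the coefficients of $f$, which forces the main bound to be established over $\mathbb{C}$ rather than merely over a number field. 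For $n=2$ a hands-on argument — applying Galois automorphisms and power maps to a putative torsion point on $Z(f)$ and intersecting with $Z(f)$ via B\'ezout, in the spirit of Beukers--Smyth — already produces the right shape, but it does not seem to extend to arbitrary $n$ without the torsion-coset machinery developed here.
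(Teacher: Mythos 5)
Your reduction is essentially the paper's own: Conjecture~\ref{ConjR} is deduced from Theorem~\ref{TEO} (equivalently Corollary~\ref{cor}) applied to $V=Z(f)$ with $\Delta$ the Newton polytope of $f$ or the box $[0,d_1]\times\cdots\times[0,d_n]$, identifying the isolated torsion points with the zero-dimensional part $\Vtorss^0$ of the torsion closure and using $\vol_n(\Delta)\leq d_1\cdots d_n$. The only cosmetic discrepancy is that the paper's main bound is stated in terms of $\vol_n(\Delta)$ for any integral polytope containing $\supp(f)$, rather than literally in terms of the toric degree of $Z(f)$, but for a hypersurface these differ by a factor depending only on $n$, so your argument is correct and matches the paper's route.
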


Beukers and Smyth \cite{BS02} reconsidered the problem for $n=2$ from a similar point of view to Ruppert's,
being able to refine the bound in terms of the Newton polytope.
Given $f\in\mathbb{C}[X,Y]$ a polynomial,
they bound the number of torsion points of~$Z(f)$ by~$22\textrm{vol}_2(\Delta)$,
where $\Delta$ denotes the Newton polytope of $f$.
This leads Aliev and Smyth~\cite{AS12} to strengthen the original conjecture of Ruppert as follows:

\begin{conjecture}[Aliev-Smyth]\label{ConjAS}
Let $f\in\mathbb{C}[X_1,\ldots,X_n]$,
the number of isolated torsion points on $Z(f)\subset\Gm^n$
is bounded by $c_n\vol_n(\Delta)$,
where $c_n$ is a constant depending only on $n$ and $\Delta$ is the Newton polytope of $f$.
\end{conjecture}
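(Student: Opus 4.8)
The plan is to deduce Conjecture~\ref{ConjAS} from our bound on the number of maximal torsion cosets in terms of the toric degree. The starting point is that the isolated torsion points of $Z(f)\subset\Gm^n$ are exactly its maximal torsion cosets of dimension~$0$: in the decomposition of $\overline{Z(f)_{\mathrm{tors}}}$ as a finite union of torsion cosets, a torsion point of $Z(f)$ is isolated precisely when it lies on no torsion coset of positive dimension contained in $Z(f)$, that is, when it is one of the $0$-dimensional pieces. Hence it suffices to bound the total number of maximal torsion cosets, and the real content is to show that for a hypersurface the relevant toric degree is controlled by $\vol_n(\Delta)$.

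First I would reduce to the case $\dim\Delta=n$. If $\Delta$ spans only an $r$-dimensional affine subspace with $r<n$, choose a vertex $\mathbf a\in\Delta\cap\mathbb Z^n$; then $\Delta-\mathbf a$ lies in a rank-$r$ saturated sublattice $L\subset\mathbb Z^n$, so there is a monomial automorphism of $\Gm^n$ carrying $L$ onto $\mathbb Z^r\times\{0\}^{n-r}$. Applying it and dividing $f$ by $X^{\mathbf a}$, which changes neither $Z(f)$ nor whether a given point is isolated, turns $f$ into a Laurent polynomial in $Y_1,\dots,Y_r$ alone, so $Z(f)$ becomes $W\times\Gm^{n-r}$ for some $W\subset\Gm^r$. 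Every point of $Z(f)$ then lies on a translate of $\{1\}^r\times\Gm^{n-r}\subset Z(f)$, hence $Z(f)$ has no isolated torsion point; since $\vol_n(\Delta)=0$ in this case, the asserted inequality holds trivially.

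So assume $\dim\Delta=n$. Let $X_\Sigma$ be the projective toric variety attached to the normal fan $\Sigma$ of $\Delta$, with $\Gm^n\subset X_\Sigma$ as dense torus. The polytope $\Delta$ defines an ample divisor class $D_\Delta$ on $X_\Sigma$ with $(D_\Delta)^n=n!\,\vol_n(\Delta)$, and because $\Delta$ is exactly the Newton polytope of $f$ (so $f$ has a nonzero coefficient at a vertex of every facet of $\Delta$) the closure of $Z(f)$ in $X_\Sigma$ is a divisor linearly equivalent to $D_\Delta$ with no component supported on the boundary. Thus the toric degree of $Z(f)$ is $n!\,\vol_n(\Delta)$, and inserting this into our main theorem on maximal torsion cosets gives the bound $c_n\vol_n(\Delta)$ with $c_n$ depending only on $n$; a fortiori it bounds the number of isolated torsion points, and taking $\Delta=\prod_i[0,d_i]$ recovers Ruppert's Conjecture~\ref{ConjR}. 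The main obstacle I foresee is reconciling the hypotheses of the general toric-degree theorem with this concrete input: $X_\Sigma$ is in general singular, so one must either have a version of the theorem valid over arbitrary complete toric varieties or pass to a projective unimodular refinement of $\Sigma$ while keeping the pulled-back degree bounded by $c(n)\,\vol_n(\Delta)$, essentially a bound on the normalized volume of a controlled triangulation of $\Delta$, which is the one step that is not pure bookkeeping.
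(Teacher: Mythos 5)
There is a genuine gap, and it sits exactly where you flag your ``main obstacle.'' Your argument needs a theorem that bounds the number of maximal torsion cosets of a hypersurface \emph{linearly} in its toric degree $D_\Delta^{\,n}=n!\,\vol_n(\Delta)$ on the (possibly singular) toric variety $X_\Sigma$ of the normal fan of $\Delta$. No such input is available: the degree-based main result (Theorem \ref{teo}) is stated for the standard embedding and gives a bound $c_n\delta^n$ in the degree $\delta$ of the defining polynomials, into which one cannot ``insert'' the toric degree; and the toric-degree statement you want to quote is essentially Theorem \ref{TEO} itself, i.e.\ the thing to be proved. Your proposed fallback --- pass to a projective unimodular refinement of $\Sigma$ while keeping the pulled-back degree bounded by $c(n)\vol_n(\Delta)$ --- cannot work as stated: for a thin polytope such as $[0,1]\times[0,D]\subset\mathbb{R}^2$ (volume $D$, diameter $\approx D$), after any unimodular monomial change of coordinates the defining equation still has degree of order $D$, so Theorem \ref{teo} only yields $c_2D^2$, not $c_2D$; in general no lattice automorphism can make $\delta^n$ comparable to $\vol_n(\Delta)$, so linearity in the volume is not ``pure bookkeeping'' but the whole difficulty. (Your reduction to $\dim\Delta=n$ and the identification of isolated torsion points with the $0$-dimensional part $\Vtorss^0$ are fine.)

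The paper closes this gap by a different device, not involving the normal fan of $\Delta$ at all. Using John's theorem (Theorem \ref{John}), Proposition \ref{pconvex} produces for each $l$ a \emph{non-unimodular} integer matrix $M_l$ and a translation $\bftau_l$ with $M_l\Delta+\bftau_l\subset 2n(l+n\diam_1(\Delta)+n)\mathcal{S}_n$ and $\det(M_l)\sim l^n\,\omega_n\,n^{-n}\vol_n(\Delta)^{-1}$. One then pulls $V=Z(f)$ back under the monomial map $\varphi$ given by $M_l$: the preimage $W$ is cut out by polynomials of degree $O_n(l)$, Theorem \ref{teo} bounds $\deg(\Wtorss^j)$ by $c_{n,j}\,O_n(l)^{n-j}$, and since $\varphi$ has degree $\det(M_l)$, comparing $\deg_\Delta(\Vtorss^j)$ with $\deg(\Wtorss^j)$ via B\'ezout and letting $l\to\infty$ converts the $\delta^n$-type bound into one linear in $\vol_n(\Delta)$ (Theorem \ref{TEO}); the case $j=0$ with $\Delta$ the Newton polytope is the conjecture. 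This covering-degree/limit argument, replacing your appeal to a hypothetical toric-degree theorem, is the missing idea.
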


For a general polynomial $f\in\mathbb{C}[X_1,\ldots,X_n]$ of degree $\delta$, these conjectures imply that the number of isolated torsion points on $Z(f)$ is bounded by
\begin{equation}\label{EQdensecase}
c_n \delta^n\mbox{.}
\end{equation}

Aliev and Smyth \cite{AS12} extended Beukers and Smyth's algorithm to higher dimensions and obtained a bound,
far from the conjectured one.
For $f\in\mathbb{C}[X_1,\ldots,X_n]$ of degree $d$, they bound the number of maximal torsion cosets in $V$ by
\begin{equation}\label{EQAS}
\displaystyle
c_1(n)
d^{c_2(n)}
\mbox{,}
\end{equation}
where $c_1(n)=n^{\frac{3}{2}(2+n)5^n}$ and $c_2(n)=\frac{1}{16}(49\cdot d^{n-2}-4n-9)$.

For sparse representation of polynomials,
Leroux \cite{Ler12} obtained an algorithm to compute the maximal torsion cosets in $V$.
As a consequence, he is able to bound the number of maximal torsion cosets in $V$
in terms of the number of nonzero coefficients of the defining polynomials of $V$.
For dense polynomials the bound has similar order to~(\ref{EQAS}). 

A much better bound follow as a particular case of the study of points of small height on subvarieties of tori by Amoroso and Viada \cite[Corollary 5.4]{AV09}. 
Let $V$ be a subvariety of $\Gm^n$ of codimension $k$ defined by polynomials of degree at most $\delta$,
and let $\Vtorss^j$ be the $j$-equidimensional part of $\Vtors$. They obtain the following bound:
\[
\deg(\Vtorss^j)\leq \big(\delta (200n^5 \log(n^2\delta))^{(n-k)n(n-1)}\big)^{n-j}
\mbox{.}
\]
Note that $\delta$ can be taken by the degree of $V$ in the case that $V$ is a hypersurface
(as in the statements of the conjectures).
Thus, we observe that the number of isolated torsion points of $V$ gives (\ref{EQdensecase}), up to a logarithmic factor.

\mbox{}

In this article we combine the approach of Ruppert and Aliev and Smyth with the methods of Amoroso and Viada  to prove both, Ruppert's and Aliev and Smyth's conjectures. Our first main result is the following:
\begin{theorem}\label{teo}
Let $V\subset\Gm^n$ be a variety of dimension $d$ defined by polynomials of degree at most $\delta$.
Then
\[
\deg(\Vtorss^j)\leq c_n \delta^{n-j}
\]
for every $j=0,\ldots,d$\,, where
$c_n=((2n-1)(n-1)(2^{2n}+2^{n+1}-2))^{nd}$.
\end{theorem}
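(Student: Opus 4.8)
The plan is to prove the estimate by a double induction — on the dimension $n$ of the ambient torus and, for fixed $n$, on the codimension of an auxiliary section — after reducing the statement about the equidimensional parts of $\Vtors$ to a statement about isolated torsion points. The first observation is that the toric Manin–Mumford theorem (Ihara–Serre–Tate, Laurent) already tells us that every irreducible component of $\Vtors$ is a torsion coset $\zeta H$. In particular the top piece $\Vtorss^d$ is a union of $d$-dimensional components of $V$ itself, so the Bézout inequality gives $\deg(\Vtorss^d)\le\deg V\le\delta^{\,n-d}$, which is far below $c_n\delta^{\,n-d}$; hence all the work lies in the ranges $0\le j<d$.

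For $0\le j<d$ I would fix a component $W=\zeta H$ of $\Vtorss^j$, with $\dim H=j$, and choose an auxiliary subtorus $L$ of codimension $j$ in sufficiently general position with respect to $H$ (so that $L\cap H$ is finite) and of \emph{bounded arithmetic complexity}. Intersecting $V$ with an appropriate torsion translate $\tau L$ and transporting $\tau L\cong\Gm^{\,n-j}$ back to a coordinate torus, $W$ becomes an isolated torsion point of a subvariety of $\Gm^{\,n-j}$ cut out by polynomials whose degrees are controlled by $\delta$ and by the complexity of $L$. Running over a complete enough finite family of such $L$ (which exists, again with complexities bounded in terms of $n$, exactly as in the obstruction-index arguments of Amoroso–Viada), the bound for $\deg(\Vtorss^j)$ is reduced to the case $j=0$ in dimension $n-j$. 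Keeping careful track of how the complexities of the $L$'s multiply the degrees is where the recursive shape of the constant $c_n=((2n-1)(n-1)(2^{2n}+2^{n+1}-2))^{nd}$ is built up.

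It then remains to treat isolated torsion points. After replacing $\mathbb{C}$ by $\overline{\mathbb{Q}}$, let $\zeta$ be an isolated torsion point of $V=Z(f_1,\dots,f_r)$ of exact order $N$, and pick a prime $p$ coprime to $N$ — in the applications one can take $p=2$, which accounts for the $2^{\,n}$- and $2^{2n}$-type factors in $c_n$. Since $\zeta^{p}$ is a Galois conjugate of $\zeta$, a reduction argument modulo a prime above $p$ (the Frobenius behaves as the $p$-power map on torsion of order prime to $p$) forces $\zeta$ to be a common zero of the $f_i$ \emph{and} of their ``$p$-twists'' $f_i\circ[p]$ — equivalently, of the conjugated polynomials $f_i(\varepsilon_1X_1,\dots,\varepsilon_nX_n)$ when $p=2$. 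The torsion points for which this enlarged system has strictly smaller dimension are bounded by a Bézout estimate for polynomials of degree $\le p\delta$; the remaining ones lie on a proper subvariety, cut out by polynomials of controlled degree, to which the inductive hypothesis applies. The quantitative input from the Amoroso–Viada circle of ideas — a uniform lower bound for the essential minimum — is what guarantees that only boundedly many twists (equivalently, boundedly many primes $p$) are needed, and this is precisely what removes the logarithmic factor and the dependence on the field of definition that appear in the earlier bounds recalled in the introduction.

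The main obstacle is the degree bookkeeping. Each $p$-twist multiplies degrees by $p$, and each section by an auxiliary subtorus $L$ multiplies them by a factor depending on the complexity of $L$; these factors accumulate through both layers of the induction (over $n$, and over the codimension $d-j$ of the section). The delicate part is to choose the primes and the subtori simultaneously so that these factors telescope into exactly $((2n-1)(n-1)(2^{2n}+2^{n+1}-2))^{nd}$, rather than into a weaker bound with an extra logarithm, an extra power of $\delta$, or a worse exponential dependence on $n$; everything else in the argument is a careful but routine combination of Manin–Mumford, Bézout, and the essential-minimum estimate.
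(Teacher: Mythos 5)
There is a genuine gap, in fact two. First, your reduction of the positive-dimensional case to isolated torsion points by slicing each component $\zeta H$ of $\Vtorss^j$ with an auxiliary subtorus $L$ of codimension $j$ of ``bounded arithmetic complexity'' is unsubstantiated and essentially circular: the subtori $H$ that occur are exactly the unknowns of the theorem, and producing a single finite family of torsion translates $\tau L$, of degree bounded only in terms of $n$ and $\delta$, that meets every such coset properly and in torsion points is not something the obstruction-index arguments of Amoroso--Viada provide. The paper does not slice at all; it runs an induction on codimension (Theorem \ref{t22}): for an irreducible $W$ contained in $V$ and not contained in any torsion coset of $V$, one produces a hypersurface $Z$ of degree $\leq\theta$ with $\overline{W_{\mathrm{tors}}}\subset W\cap Z\subsetneq W$, builds a descending chain of varieties whose components containing $W$ have strictly increasing codimension, and then controls $\deg(\Vtorss^j)$ for all $j$ simultaneously via B\'ezout together with Philippon's inequality $\sum_i\theta^{i-r}\deg(X^i)\leq\theta^{n-r}$. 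Your ``degree bookkeeping'' paragraph acknowledges the difficulty but supplies no mechanism that replaces this chain construction.

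Second, the analytic input you invoke is the wrong one. You appeal to a uniform lower bound for the essential minimum to guarantee that ``only boundedly many twists are needed'' and to remove the logarithmic factor; but Dobrowolski-type essential-minimum bounds are precisely the source of the $\log$ factors in Amoroso--Viada, and the whole point of the paper is to avoid them. What replaces them is purely Galois-theoretic and interpolation-theoretic: the generalization of the Beukers--Smyth lemma (Lemma \ref{lemma21} and Proposition \ref{proposition21}), which handles the field of definition via Kronecker--Weber and the stabilizer of $V$, shows that $\Vtors$ lies in a union $V'$ of torsion translates $\varphi^{-1}(\bfeta)V$, of conjugates $V^{\sigma}$, and of preimages $[2]^{-1}(\cdot)$, with $V\cap V'\subsetneq V$. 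The step you are missing entirely is the Hilbert-function argument (Lemma \ref{lemmaHilbert}, using Chardin's upper bound and Chardin--Philippon's lower bound) that converts this union into a \emph{single} hypersurface $Z$ of degree $\leq k(2^{2n}+2^{n+1}-2)(2d+1)\delta_0(V)$, i.e.\ linear in $\delta_0(V)$ with a constant depending only on $n$; without it, multiplying defining equations of all $2^r$ translates and of the $[2]^{-1}$-pullbacks makes the degrees blow up along the induction and the constant $c_n=((2n-1)(n-1)(2^{2n}+2^{n+1}-2))^{nd}$ cannot be reached. Your Frobenius-mod-$p$ heuristic for isolated points is in the right spirit (it is the $p=2$ Galois trick), but as written it neither handles the stabilizer (needed so that the translates are genuinely distinct and the intersection is proper) nor yields the linear-in-$\delta$ auxiliary hypersurface on which the paper's induction rests.
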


Applied to a general hypersurface of degree $\delta$, this proves the bound in (\ref{EQdensecase}).

\mbox{}

Let $\Delta\subset \mathbb{R}^n$ be a convex polytope with integer vertex and let $W\subset\Gm^n$ be a variety of dimension $d$. We define the toric degree as
\[
\deg_{\Delta}(W)=\card(W\cap Z)
\mbox{,}
\]
where $Z$ is a variety of codimension $d$ given by $d$ general polynomials $f_1,\ldots,f_d$ with Newton poytope $\Delta$.
Using John's theorem \cite[Theorem III]{Joh48}, we are able to translate this result to prove the conjectures.
Our second main result is the following:
\begin{theorem}\label{TEO}
Let $V\subset\Gm^n$ be a variety of dimension $d$ and defined by polynomials with support in the convex polytope with integer vertex $\Delta$.
Then
\[
\deg_{\Delta}(\Vtorss^j)\leq \widetilde{c}_{n,j}\vol_n(\Delta)
\]
for every $j=0,\ldots,d$\,, where
\[
\widetilde{c}_{n,j}=((2n-1)(n-1)(2^{2n}+2^{n+1}-2))^{(n-1)(n-j)} 2^n n^{2n}\omega_n^{-1}
\mbox{,}
\]
with $\omega_n$ representing the volume of the $n$-sphere.
\end{theorem}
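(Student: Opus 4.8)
The plan is to deduce Theorem~\ref{TEO} from the \emph{method} of Theorem~\ref{teo}, using John's theorem as the bridge between the combinatorial data of $\Delta$ and its volume. First I would reduce to the case where $\Delta$ is $n$-dimensional: if it is not, then after a unimodular change of variables the defining polynomials of $V$ depend, up to a common monomial factor, only on $m<n$ of the coordinates, so $V$ is the preimage of a subvariety of $\Gm^m$ under a coordinate projection; since $\vol_n(\Delta)=0$ in that case while $\deg_\Delta(\Vtorss^j)$ equals the corresponding toric degree computed in $\Gm^m$ (where $\Delta$ projects to a genuinely lower-dimensional polytope — this vanishing of $\deg_\Delta$ for flat $\Delta$ being a consequence of Bernstein's theorem), the statement reduces to smaller $n$ and an induction closes it. So assume $\dim\Delta=n$.

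The geometric heart of the argument is the following use of John's theorem. Let $E$ be the maximal-volume ellipsoid inscribed in $\Delta$, with centre $c$; then $E\subseteq\Delta\subseteq c+n(E-c)$, hence $\vol_n(E)\le\vol_n(\Delta)\le n^n\vol_n(E)$. Applying Minkowski's second theorem to the symmetric body $E-c$ with respect to $\mathbb{Z}^n$, and then replacing a system of vectors realising the successive minima by an honest lattice basis, one obtains a basis of $\mathbb{Z}^n$; after the corresponding unimodular change of coordinates (which preserves both $\deg_\Delta$ and $\vol_n$) and a translation by a lattice point, $\Delta$ is contained in an axis-parallel box $\prod_{i=1}^n[0,N_i]$ whose volume $N_1\cdots N_n$ is at most $2^n n^{2n}\omega_n^{-1}\vol_n(\Delta)$ — the factors coming from the dilation $\Delta\subseteq c+n(E-c)$, from comparing the ellipsoid with an inscribed box and with the Euclidean ball (this is where $\omega_n$ enters), and from the loss incurred in passing from successive-minima vectors to a basis. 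In these coordinates $V$ is defined by polynomials of multidegree at most $(N_1,\dots,N_n)$, and $\deg_\Delta(\Vtorss^j)\le\deg_{\prod_i[0,N_i]}(\Vtorss^j)$ by monotonicity of mixed volumes in the Newton body.

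It then remains to bound $\deg_{\prod_i[0,N_i]}(\Vtorss^j)$ by $\big((2n-1)(n-1)(2^{2n}+2^{n+1}-2)\big)^{(n-1)(n-j)}\,N_1\cdots N_n$, and this is where I expect the real difficulty to lie: one has to rerun the proof of Theorem~\ref{teo} in the multihomogeneous setting, i.e.\ with $(\mathbb{P}^1)^n$, its multidegrees, and Bézout's theorem in $(\mathbb{P}^1)^n$ replacing $\mathbb{P}^n$, the total degree, and the classical Bézout inequality. The scheme should be unchanged — a descending induction on $j$ in which a maximal torsion coset of dimension $j$ that is not a component of $V$ is shown to be swept out by torsion points of controlled degree over $\mathbb{Q}$, combined with a Dobrowolski–Amoroso–David type lower bound for heights and an arithmetic Bézout inequality — but the bookkeeping of degrees is now multihomogeneous, and the constant produced for the $j$-graded part is (at most) $\big((2n-1)(n-1)(2^{2n}+2^{n+1}-2)\big)^{(n-1)(n-j)}$, the refined form of the constant $c_n$ of Theorem~\ref{teo} (which corresponds to the extremal case $j=0$, $d=n-1$, bounded crudely). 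Since $\prod_i[0,N_i]$ is a product of intervals, $\deg_{\prod_i[0,N_i]}(W)$ for $W$ of dimension $j$ is the sum, over the maximal torsion cosets $g_\alpha H_\alpha\subseteq W$ of dimension exactly $j$, of $j!\,\vol_j\big(\pi_{H_\alpha}(\prod_i[0,N_i])\big)$, where $\pi_{H_\alpha}$ is the projection of $\mathbb{R}^n$ onto the character group of $H_\alpha$; hence everything adds up over the components of $\Vtorss^j$ and the multihomogeneous bound applies termwise.

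Assembling the two steps gives $\deg_\Delta(\Vtorss^j)\le\big((2n-1)(n-1)(2^{2n}+2^{n+1}-2)\big)^{(n-1)(n-j)}\,N_1\cdots N_n\le\widetilde c_{n,j}\vol_n(\Delta)$, which is the assertion. The only genuinely new ingredient relative to Theorem~\ref{teo} is the John/geometry-of-numbers reduction of the second step; the rest is a multihomogeneous transcription of the existing argument, and the main obstacle is to carry out this transcription while keeping every constant explicit — in particular, verifying that after replacing $\mathbb{P}^n$ by $(\mathbb{P}^1)^n$ no hidden dependence on the total degree $\delta$ or on the shape of $\Delta$ survives, so that the final bound depends on $\Delta$ only through $\vol_n(\Delta)$.
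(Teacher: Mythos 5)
Your first (geometric) step broadly matches the spirit of the paper, but your second step hides an unproved theorem, and that is where the proposal breaks down. You reduce to bounding $\deg_{\prod_i[0,N_i]}(\Vtorss^j)$ by $\big((2n-1)(n-1)(2^{2n}+2^{n+1}-2)\big)^{(n-1)(n-j)}N_1\cdots N_n$ and then say this follows by ``rerunning the proof of Theorem \ref{teo} in the multihomogeneous setting.'' Nothing of the sort is available: Theorem \ref{teo} only controls $\deg(\Vtorss^j)$ in terms of the \emph{total} degree of defining equations, so applied to a box it yields $(N_1+\cdots+N_n)^{n-j}$, not $N_1\cdots N_n$, and these differ badly for skew boxes. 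A genuine multigraded version would require redoing the whole machinery (the Chardin and Chardin--Philippon Hilbert-function bounds, Lemma \ref{lemmaHilbert}, Theorem \ref{t21}, and both induction theorems) with multidegrees in $(\mathbb{P}^1)^n$, and verifying that the constants survive; you do not do this, and your sketch even invokes Dobrowolski--Amoroso--David height lower bounds and arithmetic B\'ezout, which are not the methods of this paper (Theorem \ref{teo} here is proved by Hilbert-function interpolation and Galois conjugation, with no height inequality). In addition, your lattice-reduction claim that after a unimodular change of coordinates $\Delta\subset\prod_i[0,N_i]$ with $N_1\cdots N_n\leq 2^n n^{2n}\omega_n^{-1}\vol_n(\Delta)$ is asserted, not proved; the losses in passing from successive-minima vectors to an actual basis are not accounted for, and it is not clear the reverse-engineered constant is attainable for all $n$.

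The paper avoids the multihomogeneous transcription entirely, and this is the point you are missing. It takes the \emph{circumscribed} John ellipsoid $E=MB_n+\bfv\supset\Delta$ (so $\frac{1}{n}E\subset\Delta$ and $\omega_n\det(M)\leq n^n\vol_n(\Delta)$), sets $M_l$ to be an integer approximation of $lM^{-1}$, and pulls $V$ back along the monomial map $\varphi:\bfx\mapsto\bfx^{M_l}$. Proposition \ref{pconvex} shows $M_l\Delta+\bftau_l$ lies in $2n(l+n\diam_1(\Delta)+n)\mathcal{S}_n$, so $W=\varphi^{-1}(V)$ is defined by polynomials of bounded \emph{total} degree and Theorem \ref{teo} applies to $W$ as it stands. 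The comparison back to $\deg_\Delta(\Vtorss^j)$ is then a B\'ezout count: generic $f_1,\ldots,f_j$ with support in $\Delta$ pull back to polynomials of the same bounded degree, each fibre of $\varphi$ has $\det(M_l)$ points, whence $\det(M_l)\deg_\Delta(\Vtorss^j)\leq c_{n,j}\bigl(2n(l+n\diam_1(\Delta)+n)\bigr)^{n}$, and letting $l\to\infty$ kills the additive terms and produces exactly the factor $2^n n^{2n}\omega_n^{-1}\vol_n(\Delta)$. If you want to salvage your route, you would have to actually prove the multigraded analogue of Theorem \ref{teo} with explicit constants, which is a substantial new piece of work rather than a transcription.
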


Note that $\deg_\Delta(\Vtorss^0)=\deg(\Vtorss^0)$ and so we obtain the following result as a particular case to Theorem \ref{TEO}.
\begin{corollary}\label{cor}
Let $f\in\overline{\mathbb{Q}}[X_1,\ldots,X_n]$ and let $\Delta\subset\mathbb{R}^n$ be a convex body such that $\supp(f)\subset\Delta$.
Then the number of isolated torsion points on the hypersurface $Z(f)\subset\Gm^n$
is bounded by
\[
\widetilde{c}_n\vol_n(\Delta)\mbox{,}
\]
where $\widetilde{c}_n=((2n-1)(n-1)(2^{2n}+2^{n+1}-2))^{n(n-1)} 2^n n^{2n}\omega_n^{-1}$.
\end{corollary}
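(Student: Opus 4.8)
The plan is to obtain Corollary~\ref{cor} as the special case $j=0$ of Theorem~\ref{TEO}, after a harmless reduction on the polytope. First I would set $V:=Z(f)\subset\Gm^n$ and dispose of the degenerate cases: if $f$ is a monomial then $V\cap\Gm^n=\emptyset$, and if $f\equiv0$ then $V=\Gm^n$ and $\Vtors$ has no zero-dimensional component; in either case there are no isolated torsion points and the claimed bound holds trivially. Otherwise $V$ is a hypersurface, of dimension $d=n-1\ge0$, so the index $j=0$ lies in the admissible range $0,\ldots,d$ of Theorem~\ref{TEO}; and since $\overline{\mathbb{Q}}\subset\mathbb{C}$ the polynomial $f$ defines a genuine subvariety of the complex torus, so the theorem applies.

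Next I would replace the convex body $\Delta$ by $\Delta':=\conv(\supp(f))$, the Newton polytope of $f$. As $\supp(f)\subset\mathbb{Z}^n$, the polytope $\Delta'$ has integer vertices, so Theorem~\ref{TEO} is applicable to $V$ with the polytope $\Delta'$; and as $\Delta$ is convex and contains $\supp(f)$, we have $\Delta'\subseteq\Delta$ and hence $\vol_n(\Delta')\le\vol_n(\Delta)$. I would then identify the quantity to be bounded: an isolated torsion point of $V$ is by definition a zero-dimensional irreducible component of $\Vtors$, that is, a point of $\Vtorss^0$; since each such point contributes $1$ to the degree, the number of isolated torsion points on $Z(f)$ equals $\deg(\Vtorss^0)$. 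By the remark preceding the corollary, $\deg(\Vtorss^0)=\deg_{\Delta'}(\Vtorss^0)$ (a zero-dimensional subvariety is cut out by no equations, so its toric degree with respect to any polytope is just its cardinality).

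Putting these together with Theorem~\ref{TEO} applied to $V$, $\Delta'$ and $j=0$ yields
\[
\deg(\Vtorss^0)=\deg_{\Delta'}(\Vtorss^0)\le\widetilde{c}_{n,0}\,\vol_n(\Delta')\le\widetilde{c}_{n,0}\,\vol_n(\Delta),
\]
and substituting $j=0$ into the formula for $\widetilde{c}_{n,j}$ gives $\widetilde{c}_{n,0}=\widetilde{c}_n$, the constant in the statement. I do not expect any real obstacle here: the corollary is a formal specialization of Theorem~\ref{TEO}, and the only points requiring (minor) care are the passage from an arbitrary convex body to an integer polytope and the identification of $\deg_\Delta$ with $\deg$ in dimension zero, both already signalled in the surrounding text. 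The genuine work is carried out in Theorems~\ref{teo} and~\ref{TEO}.
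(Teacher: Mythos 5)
Your proposal is correct and follows essentially the same route as the paper: the corollary is obtained as the case $j=0$ of Theorem~\ref{TEO}, using $\deg_{\Delta}(\Vtorss^0)=\deg(\Vtorss^0)$ and the observation that one may pass to the Newton polytope $\conv(\supp(f))\subseteq\Delta$. Your explicit handling of the integer-vertex reduction and the degenerate cases is only a slightly more careful write-up of what the paper leaves implicit.
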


Given $f$ a polynomial of multidegree $(d_1,\ldots,d_n)$, we can take $\Delta=[0,d_1]\times\cdots\times [0,d_n]$ which proves Ruppert's conjecture (Conjecture \ref{ConjR}). Moreover, taking $\Delta$ as the Newton polytope of $f$ proves Aliev and Smyth's conjecture (Conjecture \ref{ConjAS}).

\vspace*{.5cm}

To discard the logarithmic error term in \cite[Theorem 1.2]{AV09},
we reformulate the main theorems of Amoroso and Viada so that they suit our particular case of torsion subvarieties.

To do so, first we extend the argument introduced by Beukers and Smyth in \cite{BS02} to a more algebraic setting.
In Proposition \ref{proposition21} we get, for any irreducible subvariety $V$ of $\Gm^n$,
another variety $V'\subset\Gm^n$ with the same dimension and similar degree,
such that $\Vtors$ lies in the intersection $V\cap V'\subsetneq V$.
Moreover, this $V'$ can be obtained explicitly from our initial $V$.

Next, in Theorem \ref{t21}, we use the Hilbert function to consider, instead of the subvariety~$V'$,
a hypersurface $Z$ satisfying $\Vtors\subset V\cap Z\subsetneq V$.
To do that we rely on both an upper and a lower bound for the Hilbert function,
the upper bound being a result of Chardin \cite{Cha88} and
the lower bound a result of Chardin and Philippon \cite{CP99}.
By using this bounds, we obtain Lemma \ref{lemmaHilbert}, which serves as a bridge between $V'$ and $Z$ and is, therefore, the key element in our proof of Theorem \ref{t21}.

Afterwards, we present two induction theorems, Theorem \ref{t22} and Theorem \ref{teo},
which are the analogues of \cite[Theorems 2.2 and 1.2]{AV09} in our case.

Finally, we use John's theorem and Proposition \ref{pconvex} to include a transformation of any convex body $\Delta$ into a homothecy of the standard $n$-simplex of comparable $n$-volume. By doing this, we are able to translate Theorem \ref{teo} and obtain Theorem \ref{TEO}. As a consequence of this, we obtain Corollary \ref{cor}, which proves both conjectures.

\subsection*{Acknowledgments}

I thank Francesco Amoroso and Martín Sombra for their advice, corrections and patience.
I also thank Éric Ricard for calling my attention to John's theorem.

\section{Preliminaries}

\subsection{Homomorphisms and subgroups of algebraic tori}
Let $\Gm^n=(\mathbb{C}^{\times})^n$ be the \emph{multiplicative group} or \emph{algebraic torus} of dimension $n$.
A point $(x_1,\ldots,x_n)\in\Gm^n$ is alternatively denoted by $\bfx$.
In particular,
$\mathbf{1}=(1,\ldots,1)$ represents the identity element.
Given $\bfx\in\Gm^n$ and $\boldsymbol{\lambda}=(\lambda_1,\ldots,\lambda_n)\in\mathbb{Z}^n$ we denote
\[
\bfx^{\boldsymbol{\lambda}}=x_1^{\lambda_1}\cdots x_n^{\lambda_n}
\mbox{.}
\]
Moreover, given $S\subset\Gm^n$ any subset we denote
\[
\bfx\cdot S=\lbrace \bfx\cdot\bfy \mid\; \bfy\in S\rbrace
\mbox{.}
\] 
If the context is clear, we write simply $\bfx S$.

We call \emph{homomorphism} an algebraic group homomorphism
$\varphi:\Gm^{n_1}\rightarrow\Gm^{n_2}$.
There is a bijection between integer matrices $\mathcal{M}_{n_2\times n_1}(\mathbb{Z})$ and homomorphisms $\Hom(\Gm^{n_1},\Gm^{n_2})$ defined as follows. Let $M\in\mathcal{M}_{n_2\times n_1}(\mathbb{Z})$ and let $\bflambda_1,\ldots,\bflambda_{n_2}\in\mathbb{Z}^{n_1}$ be the row vectors of~$M$, then
\begin{align*}
\varphi_M:\Gm^{n_1} & \longrightarrow \Gm^{n_2}\\
\bfx &\longmapsto (\bfx^{\bflambda_1},\ldots,\bfx^{\bflambda_{n_2}})
\end{align*} 
defines the corresponding homomorphism.
In particular, for any $l\in\mathbb{Z}$, we define the \emph{multiplication} by $l$ as the following endomomorphism:
\begin{align*}
[l]:\Gm^n &\longrightarrow \Gm^n\\
(x_1,\ldots,x_n) &\longmapsto (x_1^l,\ldots ,x_n^l)
\end{align*}
wich corresponds to the diagonal matrix $l\cdot\Id\in\mathcal{M}_{n\times n}(\mathbb{Z})$.

\vspace*{2mm}

We denote by $\zeta_k$ a primitive $k$-th root of unity, for any $k\in\mathbb{N}_{>0}$, and by
\[
\mu_{k}=\lbrace \zeta\in \Gm \mid\; \zeta^k=1\rbrace
\]
the subgroup of $k$-th roots of unity.
In particular, we denote by
\[
\mu_{\infty}=\bigcup_{k\in\mathbb{N}_{>0}}\mu_k
\]
the subgroup of roots of unity in $\Gm$.
Therefore,
\[
\mu_{\infty}^n
=\lbrace \boldsymbol{\xi}\in\Gm^n \mid\;
[k]\boldsymbol{\xi}=\mathbf{1}\mbox{ for some } k\in\mathbb{N}_{>0}\rbrace
\]
is the subgroup of the \emph{torsion points} of $\Gm^n$
and $\mu_k^n=\lbrace \bfxi\in\Gm^n\mid\; [k]\bfxi=\mathbf{1}\rbrace$ is the subgroup of $k$-torsion points of $\Gm^n$.
For any subvariety $V\subset\Gm^n$, we denote by $\Vtorss=V\cap\mu_{\infty}^n$ the set of torsion points on $V$
and by $\Vtors$ its Zariski closure in $\Gm^n$.
We call $\Vtors$ the \emph{torsion subvariety} of $V$.

By \emph{torsion coset} we understand a subvariety $\bfomega H\subset\Gm^n$,
where $H$ is an irreducible algebraic subgroup of $\Gm^n$ and
$\bfomega$ a torsion point.
Let $V$ be a subvariety of $\Gm^n$,
then we say that a torsion coset $\bfomega H$ is maximal in $V$ if
it is maximal by inclusion.
By the toric version of the Manin-Mumford conjecture (Laurent Theorem), $\Vtors$ is the union of torsion cosets in $V$,
hence we can write
\[
\Vtors=\bigcup_{
\substack{
\bfomega H\subset V\\
\mbox{\scriptsize torsion coset}
}
}
\bfomega H
\mbox{.}
\]
In fact, it is enough to take the maximal torsion cosets in $V$ in the index of the union.

Let $\Lambda$ be a subgroup of $\mathbb{Z}^n$.
We denote by $\Lambda^{\textrm{sat}}=(\Lambda\otimes_\mathbb{Z}\mathbb{R})\cap\mathbb{Z}^n$ the \emph{saturation} of $\Lambda$
and we call $[\Lambda^{\textrm{sat}}:\Lambda]$ the \emph{index} of $\Lambda$.
In particular, we say that $\Lambda$ is \emph{saturated} if $[\Lambda^{\textrm{sat}}:\Lambda]=1$.
For any subgroup $\Lambda$, we define the algebraic subgroup of $\Gm^n$ associated to $\Lambda$ as follows
\[
H_\Lambda=
\lbrace \bfx\in\Gm^n \mid\; \bfx^{\boldsymbol{\lambda}}=1 \mbox{, } \boldsymbol{\lambda}\in \Lambda\rbrace\mbox{.}
\]

The following result allows us to understand the relation between subgroups of $\mathbb{Z}^n$ and algebraic subgroups of $\Gm^n$.

\begin{theorem}\label{theoremAlgebraicSubgroups}
The map $\Lambda\mapsto H_\Lambda$ is a bijection between subgroups of $\mathbb{Z}^n$ and algebraic subgroups of $\Gm^n$.
A subgroup $H_\Lambda$ is irreducible if and only if $\Lambda$ is saturated.
Moreover, for any two subgroups $\Lambda$ and $\Lambda'$ we have $H_{\Lambda} \cdot H_{\Lambda'}=H_{\Lambda\cap \Lambda'}$.
\end{theorem}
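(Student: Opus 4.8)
The plan is to make explicit the classical duality between $\Gm^n$ and its character lattice $\mathbb{Z}^n$, reducing everything to diagonal subgroups by means of the Smith normal form. Given a subgroup $\Lambda\subseteq\mathbb{Z}^n$ of rank $r$, I first choose a $\mathbb{Z}$-basis $\bff_1,\ldots,\bff_n$ of $\mathbb{Z}^n$ and positive integers $d_1\mid\cdots\mid d_r$ such that $d_1\bff_1,\ldots,d_r\bff_r$ is a basis of $\Lambda$. The map $\bfx\mapsto(\bfx^{\bff_1},\ldots,\bfx^{\bff_n})$ is the automorphism $\varphi_F$ of $\Gm^n$ attached to the matrix $F\in\GL_n(\mathbb{Z})$ with rows $\bff_i$; in the coordinates $u_i=\bfx^{\bff_i}$ one reads off directly that
\[
H_\Lambda=\mu_{d_1}\times\cdots\times\mu_{d_r}\times\Gm^{\,n-r}\mbox{,}
\]
and, writing a character as $\bfx^{\boldsymbol{\mu}}=\prod_j u_j^{b_j}$ with $\boldsymbol{\mu}=\sum_j b_j\bff_j$, that $\bfx^{\boldsymbol{\mu}}$ is identically $1$ on $H_\Lambda$ exactly when $d_j\mid b_j$ for $j\le r$ and $b_j=0$ for $j>r$, i.e. exactly when $\boldsymbol{\mu}\in\Lambda$. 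Hence $\Lambda$ is recovered from $H_\Lambda$ as the group of characters of $\Gm^n$ vanishing on it, so the map $\Lambda\mapsto H_\Lambda$ is injective.

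For surjectivity, let $H\subseteq\Gm^n$ be an algebraic subgroup and set $\Lambda=\{\boldsymbol{\mu}\in\mathbb{Z}^n:\bfx^{\boldsymbol{\mu}}=1\text{ for all }\bfx\in H\}$; clearly $H\subseteq H_\Lambda$, and it is enough to prove that $I(H)$ is generated by the binomials $\bfx^{\bflambda}-1$, $\bflambda\in\Lambda$, for then $I(H)=I(H_\Lambda)$ and $H=H_\Lambda$. The Laurent ring $\mathbb{C}[\Gm^n]=\bigoplus_{\boldsymbol{\mu}\in\mathbb{Z}^n}\mathbb{C}\,\bfx^{\boldsymbol{\mu}}$ carries the translation action of $H$, with $\bfx^{\boldsymbol{\mu}}$ an eigenvector, and $H$ acts on $\bfx^{\boldsymbol{\mu}}$ and $\bfx^{\boldsymbol{\nu}}$ by the same character precisely when $\boldsymbol{\mu}-\boldsymbol{\nu}\in\Lambda$; grouping monomials by class modulo $\Lambda$ thus yields the isotypic decomposition $\mathbb{C}[\Gm^n]=\bigoplus_{[\boldsymbol{\mu}]}R_{[\boldsymbol{\mu}]}$. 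Since $H\cdot H=H$, the ideal $I(H)$ is $H$-stable, and using Dedekind's linear independence of the pairwise distinct characters attached to the $R_{[\boldsymbol{\mu}]}$ one checks that every $H$-stable subspace — in particular $I(H)$ — is the direct sum of its intersections with the $R_{[\boldsymbol{\mu}]}$. A nonzero element of $I(H)\cap R_{[\boldsymbol{\mu}]}$, divided by one of its (invertible) monomials, takes the form $g=\sum_k c_k\bfx^{\bflambda_k}$ with $\bflambda_k\in\Lambda$ and $\bflambda_0=\mathbf{0}$; evaluating $g$ on $H$, where each $\bfx^{\bflambda_k}$ equals $1$, forces $\sum_k c_k=0$, so $g=\sum_k c_k(\bfx^{\bflambda_k}-1)$ lies in the binomial ideal. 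This gives $I(H)\subseteq(\bfx^{\bflambda}-1:\bflambda\in\Lambda)\subseteq I(H_\Lambda)\subseteq I(H)$, hence $H=H_\Lambda$. This homogeneity of $I(H)$ for the $\mathbb{Z}^n/\Lambda$-grading — the assertion that a closed subgroup of a torus imposes only monomial constraints — is the one genuinely substantive point; I isolate it as the statement that every $H$-stable subspace of $\mathbb{C}[\Gm^n]$ is graded, proved through linear independence of characters so as not to invoke the structure theory being established (alternatively one may simply cite the duality for diagonalizable groups).

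It remains to settle the two complements. With the normal form in hand, $H_\Lambda\cong\mu_{d_1}\times\cdots\times\mu_{d_r}\times\Gm^{\,n-r}$ is connected — equivalently irreducible as a variety — if and only if every $d_i=1$; and since $\Lambda^{\mathrm{sat}}=\mathbb{Z}\bff_1\oplus\cdots\oplus\mathbb{Z}\bff_r$ in these coordinates, one has $[\Lambda^{\mathrm{sat}}:\Lambda]=d_1\cdots d_r$, so irreducibility is exactly saturation. Finally, $H_\Lambda\cdot H_{\Lambda'}$ is the image of the homomorphism $H_\Lambda\times H_{\Lambda'}\to\Gm^n$, hence an algebraic subgroup; a character $\bfx^{\boldsymbol{\mu}}$ is trivial on it if and only if it is trivial on both $H_\Lambda$ and $H_{\Lambda'}$, i.e. if and only if $\boldsymbol{\mu}\in\Lambda\cap\Lambda'$, so by the bijection just proved $H_\Lambda\cdot H_{\Lambda'}=H_{\Lambda\cap\Lambda'}$.
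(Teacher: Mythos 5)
Your proof is correct. Note, though, that the paper does not actually prove this statement: it simply cites \cite[Proposition 3.2.7 and Theorem 3.2.19]{BG06}, so what you have written is essentially a self-contained rendition of the standard duality argument that the cited reference supplies. Your route — Smith normal form to put $H_\Lambda$ in the form $\mu_{d_1}\times\cdots\times\mu_{d_r}\times\Gm^{n-r}$, recovery of $\Lambda$ as the group of characters trivial on $H_\Lambda$, and the key point that $I(H)$ is generated by the binomials $\bfx^{\bflambda}-1$, proved by showing via linear independence of characters that every $H$-stable subspace of the Laurent ring is graded for the $\mathbb{Z}^n/\Lambda$-grading — is exactly the substantive content hidden behind the citation, and you correctly isolate it as the one genuinely nontrivial step. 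The treatment of the two complements is also fine: the index computation $[\Lambda^{\mathrm{sat}}:\Lambda]=d_1\cdots d_r$ gives the irreducibility criterion, and the product formula follows from your bijection once one knows $H_\Lambda\cdot H_{\Lambda'}$ is closed; that closedness (image of a homomorphism of algebraic groups, or of a product of closed subgroups, is a closed subgroup) is the only ingredient you invoke without proof, and it is standard, so citing it is appropriate. The trade-off is the expected one: the paper's citation is shorter, while your argument makes the mechanism explicit and keeps the development self-contained.
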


\begin{proof}
See \cite[Proposition 3.2.7 and Theorem 3.2.19]{BG06}.
\end{proof}

\begin{corollary}\label{homomorphism}
Let $H$ be a subgroup of $\Gm^n$ of dimension $n-r$,
then there exists a surjective homomorphism
\[
\varphi : \Gm^n \longtwoheadrightarrow \Gm^{r}
\]
such that $\Ker(\varphi)=H$.
\end{corollary}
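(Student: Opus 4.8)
The statement to prove is Corollary \ref{homomorphism}: given a subgroup $H\subset\Gm^n$ of dimension $n-r$, there is a surjective homomorphism $\varphi:\Gm^n\twoheadrightarrow\Gm^r$ with $\Ker(\varphi)=H$. The natural approach is to pass through the dictionary of Theorem \ref{theoremAlgebraicSubgroups}, which matches subgroups of $\Gm^n$ with subgroups of $\mathbb{Z}^n$. First I would reduce to the irreducible case: if $H$ is not irreducible, its identity component $H^0$ is, and a homomorphism with kernel $H^0$ suffices provided we then quotient the target by the image of the (finite) component group; but it is cleaner to observe that the problem only asks for existence, so I would directly take $\Lambda\subset\mathbb{Z}^n$ the subgroup with $H=H_\Lambda$ and replace $\Lambda$ by its saturation $\Lambda^{\textrm{sat}}$ if we want the irreducible hypothesis — actually the statement as given does not require $H$ irreducible, so I must be careful and work with $\Lambda$ itself.

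Here is the concrete construction. Let $\Lambda\subset\mathbb{Z}^n$ be the subgroup with $H=H_\Lambda$, and let $r'=\operatorname{rank}\Lambda$; since $\dim H_\Lambda=n-\operatorname{rank}\Lambda$ we have $r'=r$. Choose a basis $\bflambda_1,\ldots,\bflambda_r$ of $\Lambda^{\textrm{sat}}$ (the saturation, so that $\mathbb{Z}^n/\Lambda^{\textrm{sat}}$ is free), and let $M\in\mathcal{M}_{r\times n}(\mathbb{Z})$ be the matrix with these rows. Then $\varphi_M:\Gm^n\to\Gm^r$, $\bfx\mapsto(\bfx^{\bflambda_1},\ldots,\bfx^{\bflambda_r})$, is a homomorphism whose kernel is $\{\bfx:\bfx^{\bflambda_i}=1,\ i=1,\ldots,r\}=H_{\Lambda^{\textrm{sat}}}=H_\Lambda^0$. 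If one genuinely needs $\Ker\varphi=H_\Lambda$ on the nose (not just its identity component), one instead picks generators $\bflambda_1,\ldots,\bflambda_r$ of $\Lambda$ and extends the $r\times n$ matrix to be part of a $\GL_n(\mathbb{Z})$ change of coordinates of the free rank-$r$ quotient, which is possible exactly because... — and this is the delicate point — one needs the quotient $\mathbb{Z}^n/\Lambda$ to contain a free direct summand of rank $r$ onto which $\Lambda$ maps with full column rank; this holds by the stacked (Smith) normal form: write $U\Lambda$-basis$\,V$ in Smith form with diagonal entries $d_1\mid\cdots\mid d_r$, and read off the homomorphism.

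For surjectivity of $\varphi_M$: the image of $\varphi_M$ is the algebraic subgroup of $\Gm^r$ cut out by the relations among the columns, equivalently by $\{\bfm\in\mathbb{Z}^r : \bfm^t M = 0\}$; since the rows of $M$ are $\mathbb{Z}$-linearly independent (they form a basis of a rank-$r$ lattice), this relation lattice is trivial, so the image is all of $\Gm^r$ by the bijection in Theorem \ref{theoremAlgebraicSubgroups} applied in dimension $r$ (the subgroup attached to the zero lattice is $\Gm^r$ itself). Alternatively, surjectivity is automatic because $\varphi_M$ is a homomorphism of algebraic groups between irreducible varieties whose image has dimension $n-\dim\Ker\varphi_M = n-(n-r)=r=\dim\Gm^r$, hence is dense, hence (being a subgroup, thus closed) all of $\Gm^r$.

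**Main obstacle.** The only real subtlety is bookkeeping around irreducibility: deciding whether to prove $\Ker\varphi=H$ exactly or merely up to connected component, and in the exact case producing the matrix $M$ completing a partial basis — this is where Smith normal form (equivalently, the structure theorem for finitely generated abelian groups) does the work. Everything else is a routine application of the subgroup–lattice dictionary of Theorem \ref{theoremAlgebraicSubgroups} and a dimension count.
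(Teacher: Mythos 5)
Your construction is correct in substance, but it takes a different route from the paper's. The paper stays on the subgroup side of the dictionary: it writes $H=H_\Lambda$, passes to the orthogonal lattice $\Lambda^{\perp}$, which is saturated, so that $H_{\Lambda^{\perp}}$ is an irreducible subgroup isomorphic to $\Gm^{r}$, uses $H_{\Lambda}\cdot H_{\Lambda'}=H_{\Lambda\cap\Lambda'}$ to get $\Gm^{n}=H\cdot H_{\Lambda^{\perp}}$, and obtains $\varphi$ as the projection onto $H_{\Lambda^{\perp}}\simeq\Gm^{r}$. You instead work on the character side: you build $\varphi$ explicitly as a monomial map $\varphi_M$ attached to a matrix $M$ whose rows span (a lattice commensurable with) $\Lambda$, and you prove surjectivity by a rank/dimension count. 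Both are applications of Theorem \ref{theoremAlgebraicSubgroups}; yours is more explicit and avoids interpreting the projection when $H\cap H_{\Lambda^{\perp}}$ is a nontrivial finite group, at the price of invoking the standard facts $\dim H_\Lambda=n-\operatorname{rank}\Lambda$ and that the image of a homomorphism of algebraic groups is closed.

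One correction: the step you single out as the delicate point is not delicate, and your proposed fix is misstated. If $\bflambda_1,\ldots,\bflambda_r$ is any generating set (for instance a basis) of $\Lambda$ itself and $M$ is the matrix with these rows, then
\[
\Ker(\varphi_M)=\lbrace \bfx\in\Gm^n \mid\; \bfx^{\bflambda_i}=1,\ i=1,\ldots,r\rbrace=H_\Lambda
\]
exactly, since every $\bflambda\in\Lambda$ is an integer combination of the $\bflambda_i$; no saturation and no completion of $M$ inside $\GL_n(\mathbb{Z})$ is needed. In fact the completion you describe is impossible in general: the rows of a matrix in $\GL_n(\mathbb{Z})$ form a basis of $\mathbb{Z}^n$, so if the first $r$ rows generated $\Lambda$ then $\Lambda$ would be a direct summand, i.e.\ saturated. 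Likewise the sentence about $\mathbb{Z}^n/\Lambda$ containing a free summand onto which $\Lambda$ maps with full rank is not meaningful as written, since $\Lambda$ maps to zero in that quotient. Your Smith normal form fallback does work (it amounts to choosing a convenient basis of $\Lambda$ after composing with an automorphism of $\Gm^n$), but it is unnecessary. Note also that both of your surjectivity arguments use only the $\mathbb{Z}$-linear independence of the rows of $M$, so they apply verbatim to this choice of $M$, and the corollary follows with kernel equal to $H$ on the nose.
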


\begin{proof}
By Theorem \ref{theoremAlgebraicSubgroups}, there exists a unique lattice $\Lambda\subset\mathbb{Z}^n$ such that
\[
H=H_\Lambda=
\lbrace \bfx\in\Gm^n \mid\; \bfx^{\boldsymbol{\lambda}}=1 \mbox{, } \boldsymbol{\lambda}\in \Lambda\rbrace\mbox{.}
\]
Take the saturated subgroup
$\Lambda^{\perp}=\lbrace \bfx\in\mathbb{Z}^n \mid\; \langle \bfx , \bfy\rangle =0 \mbox{ for all } \bfy\in\Lambda\rbrace$,
so $H_{\Lambda^{\perp}}$ is irreducible, that is $H_{\Lambda^{\perp}}\simeq\Gm^r$.
Also by Theorem \ref{theoremAlgebraicSubgroups}, we have that $\Gm^n=H_{\lbrace 0\rbrace}=H\cdot H_{\Lambda^{\perp}}$
and $\varphi$ can be obtained as the following composition of homomorphisms:
\[
\varphi:
\Gm^n
=
H \cdot H_{\Lambda^{\perp}}
\longtwoheadrightarrow
H_{\Lambda^{\perp}}
\xlongrightarrow{\simeq}
\Gm^r
\mbox{.}
\]
\end{proof}

Let $V$ be a variety in $\Gm^n$, we define the \emph{stabilizer} of $V$ as
\[
\Stab(V)=\lbrace \boldsymbol{\xi}\in\Gm^n \mid\; \boldsymbol{\xi}V=V \rbrace
\mbox{.}
\]
In particular, $\Stab(V)$ is an algebraic subgroup of $\Gm^n$.
By means of Corollary \ref{homomorphism} we are able to identify $V$, via a homomorphism, to a variety with trivial stabilizer.
The following result is a direct consequence of Corollary \ref{homomorphism} and illustrates some useful properties of this homomorphism.

\begin{corollary}\label{remark}
Let $V\subset \Gm^n$ be a variety. Then there exists a homomorphism
$\varphi:\Gm^n\rightarrow\Gm^r$ such that $r=\codim(\Stab(V))$ and $\Ker(\varphi)=\Stab(V)$.
Moreover, $\varphi$ satisfies
\begin{itemize}
\item[(i)]
$\Stab(\varphi(V))=\lbrace \mathbf{1}\rbrace$;
\item[(ii)]
$\varphi^{-1}(\varphi(V))=V$; 
\item[(iii)]
$\varphi^{-1}(\bfeta) V= \bfeta_0 V$, for every $\bfeta\in\Gm^r$ and for any $\bfeta_0\in\varphi^{-1}(\bfeta)$.
\end{itemize}
\end{corollary}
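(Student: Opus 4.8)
The plan is to deduce everything from Corollary \ref{homomorphism} applied to the algebraic subgroup $H=\Stab(V)$, together with two elementary facts: the identity $\Stab(V)\cdot V=V$ (every $\bfxi\in\Stab(V)$ satisfies $\bfxi V=V$, and $\mathbf{1}\in\Stab(V)$, so the union of the translates $\bfxi V$, $\bfxi\in\Stab(V)$, is exactly $V$), and the standard set-theoretic relation $\varphi^{-1}(\varphi(S))=\Ker(\varphi)\cdot S$ valid for any subset $S\subset\Gm^n$.

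First I would produce $\varphi$: since $\Stab(V)$ is an algebraic subgroup of $\Gm^n$ of dimension $n-r$ with $r=\codim(\Stab(V))$, Corollary \ref{homomorphism} provides a surjective homomorphism $\varphi\colon\Gm^n\longtwoheadrightarrow\Gm^r$ with $\Ker(\varphi)=\Stab(V)$. Then (ii) follows by taking $S=V$ above: $\varphi^{-1}(\varphi(V))=\Ker(\varphi)\cdot V=\Stab(V)\cdot V=V$. For (iii), given $\bfeta\in\Gm^r$ and $\bfeta_0\in\varphi^{-1}(\bfeta)$, the fibre $\varphi^{-1}(\bfeta)$ is the coset $\bfeta_0\Ker(\varphi)=\bfeta_0\Stab(V)$, so $\varphi^{-1}(\bfeta)\,V=\bfeta_0\,\Stab(V)\,V=\bfeta_0 V$.

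For (i), I would take $\bfeta\in\Stab(\varphi(V))$ and, using surjectivity of $\varphi$, lift it to $\bfxi\in\Gm^n$ with $\varphi(\bfxi)=\bfeta$. Then $\varphi(\bfxi V)=\bfeta\,\varphi(V)=\varphi(V)$, hence $\bfxi V\subset\varphi^{-1}(\varphi(V))=V$ by (ii). To upgrade this inclusion to an equality --- the only step that is not pure formalism --- note that $(\bfxi\cdot)$ is an automorphism of $\Gm^n$, so the sets $\bfxi^k V$ form a descending chain of closed subsets, which stabilizes by Noetherianity; applying a suitable power of $(\bfxi^{-1}\cdot)$ then gives $\bfxi V=V$. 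Therefore $\bfxi\in\Stab(V)=\Ker(\varphi)$ and $\bfeta=\varphi(\bfxi)=\mathbf{1}$, i.e. $\Stab(\varphi(V))=\lbrace\mathbf{1}\rbrace$.

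I do not expect a real obstacle: the substance is contained in Corollary \ref{homomorphism}, and the rest is coset bookkeeping plus the one-line Noetherian argument for $\bfxi V\subset V\Rightarrow\bfxi V=V$. The one thing to keep an eye on is that $\varphi(V)$ here must be understood as the (constructible) image rather than its Zariski closure, but since all the identities above are proved at the level of sets and $\Stab(V)\cdot V=V$ is already a closed set, this causes no difficulty.
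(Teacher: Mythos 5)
Your proof is correct and follows exactly the route the paper intends: the paper states this corollary without proof as a ``direct consequence'' of Corollary \ref{homomorphism}, and your argument supplies precisely the routine verifications --- applying that corollary to $H=\Stab(V)$, the identities $\varphi^{-1}(\varphi(S))=\Ker(\varphi)\cdot S$ and $\Stab(V)\cdot V=V$ for (ii) and (iii), and the Noetherian stabilization of the chain $\bfxi^k V$ to upgrade $\bfxi V\subset V$ to $\bfxi V=V$ in (i). No gaps.
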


An extra remark should be made regarding the relation between the torsion cosets and the stabilizer of $V$.
For any torsion coset $\bfomega H$ in $V$,
we have that $\Stab(V)\cdot\bfomega H$ is a union of torsion cosets in $V$.
In particular, every maximal torsion coset in $V$ has dimension at least $\dim(\Stab(V))$.

\subsection{Hilbert function}
Let $V$ be a variety in $\Gm^n$. We define the \emph{degree of definition} of $V$, $\delta(V)$, as the minimal degree $\delta$ such that $V$ is the intersection of hypersurfaces of degree at most $\delta$. We also define the \emph{degree of incomplete definition} of $V$, $\delta_0(V)$, as the minimal degree $\delta_0$ such that there exists an intersection $X$ of hypersurfaces of degree at most $\delta_0$ such that any irreducible component of $V$ is a component of $X$.
As a direct consequence of the definition, for any equidimensional variety $V$, we have the following inequalities
\[
\delta_0(V)\leq \delta(V)\leq \deg(V)\mbox{.}
\]

Let $V$ be a subvariety of $\Gm^n$ and
let the closure of $V$ in $\mathbb{P}^n$ be defined by the homogeneous radical ideal $I$ in
$\overline{\mathbb{Q}}\left[\bfx \right]$.
For $\nu\in\mathbb{N}$,
we denote by $H(V;\nu)$ the Hilbert function~$\dim(\overline{\mathbb{Q}}[\bfx]/I)_\nu$.

The following upper bound for the Hilbert function,
is a theorem of Chardin \cite{Cha88}.
\begin{theorem}\label{HilbertUpperBound}
Let $V\subseteq\Gm^n$ be an equidimensional variety of dimension $d=n-k$
and let $\nu\in\mathbb{N}$. Then
\[
H(V;\nu)\leq 
\begin{pmatrix}
\nu + d\\ d
\end{pmatrix}
\deg(V)
\mbox{.}
\]
\end{theorem}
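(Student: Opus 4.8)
This is a theorem of Chardin \cite{Cha88}; for completeness I indicate the strategy I would follow, which is an induction on $d$ by generic hyperplane sections. Write $A=\overline{\mathbb{Q}}[\bfx]/I$ for the homogeneous coordinate ring of the projective closure of $V$, so that $H(V;\nu)=\dim_{\overline{\mathbb{Q}}}A_\nu$. When $d=0$ the variety $V$ consists of $\deg(V)$ reduced points of $\mathbb{P}^n$, and $H(V;\nu)$ is nondecreasing in $\nu$, equal to $1$ for $\nu=0$ and eventually equal to $\deg(V)$; hence $H(V;\nu)\le\deg(V)=\deg(V)\binom{\nu}{0}$ for all $\nu\ge 0$.

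For the inductive step, choose a linear form $\ell$ lying in none of the finitely many minimal primes of $A$; this is possible for a generic $\ell$ since $d\ge 1$, so that each such prime defines a positive-dimensional subvariety. As $V$ is reduced and equidimensional, these minimal primes are precisely the associated primes of $A$, so $\ell$ is a non-zero-divisor on $A$, and the exact sequence $0\to A(-1)\xrightarrow{\ell}A\to A/\ell A\to 0$ yields $\dim(A/\ell A)_\nu=H(V;\nu)-H(V;\nu-1)$, whence
\[
H(V;\nu)=\sum_{i=0}^{\nu}\dim_{\overline{\mathbb{Q}}}(A/\ell A)_i .
\]
By Bertini's theorem (we work in characteristic $0$) a generic hyperplane meets $V$ transversally, so the reduced section $V':=(V\cap\{\ell=0\})_{\mathrm{red}}$ is equidimensional of dimension $d-1$ and of degree $\deg(V)$. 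Since $I(V')\supseteq I+(\ell)$, there is a surjection $A/\ell A\twoheadrightarrow A'$ onto the coordinate ring of $V'$; writing $\dim(A/\ell A)_i=H(V';i)+\varepsilon_i$ with $\varepsilon_i\ge 0$, the inductive hypothesis $H(V';i)\le\deg(V)\binom{i+d-1}{d-1}$ together with the identity $\sum_{i=0}^{\nu}\binom{i+d-1}{d-1}=\binom{\nu+d}{d}$ gives
\[
H(V;\nu)\le\deg(V)\binom{\nu+d}{d}+\sum_{i=0}^{\nu}\varepsilon_i .
\]

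The hard part is therefore to dispose of the error term $\sum_i\varepsilon_i$, which is genuinely present: when $V$ is not Cohen--Macaulay the scheme-theoretic section $\operatorname{Proj}(A/\ell A)$ can acquire embedded points supported at the irrelevant ideal, the standard example being two skew lines in $\mathbb{P}^3$, for which \emph{no} linear section $A/\ell A$ is a reduced ring. The kernel of $A/\ell A\twoheadrightarrow A'$ equals $H^0_{\mathfrak{m}}(A/\ell A)$, and the long exact sequence in local cohomology attached to the displayed short exact sequence (using $H^0_{\mathfrak{m}}(A)=0$, which holds because $A$ is reduced of positive dimension) identifies it with $\ker\bigl(\ell\colon H^1_{\mathfrak{m}}(A)(-1)\to H^1_{\mathfrak{m}}(A)\bigr)$, so that $\varepsilon_i\le\dim H^1_{\mathfrak{m}}(A)_{i-1}$. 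One must then check that the accumulated defect $\sum_i\varepsilon_i$ is absorbed by the slack in $\sum_{i\le\nu}H(V';i)\le\deg(V)\binom{\nu+d}{d}$ -- equivalently, that the deficiency module $H^1_{\mathfrak{m}}(A)$ is controlled sharply enough by $\deg(V)$. Making this last bookkeeping precise is exactly the content of \cite{Cha88}, which I would invoke for the final estimate rather than reproduce in detail.
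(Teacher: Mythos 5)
The paper gives no proof of this statement at all: it is quoted as a theorem of Chardin, with the reference \cite{Cha88} standing in for the argument, and your proposal ultimately does the same, invoking Chardin for the decisive estimate on the error term. Your preliminary sketch is sound as far as it goes and correctly pinpoints the genuine obstruction to the naive hyperplane-section induction (the irrelevant-ideal-supported defect in $A/\ell A$, as in the two-skew-lines example), so deferring that bookkeeping to \cite{Cha88} puts you exactly on the same footing as the paper.
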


On the other hand, as a consequence of a result of Chardin and Phillipon \cite[Corollaire 3]{CP99}
on Castelnuovo's regularity, we have the following lower bound for the Hilbert function:
\begin{theorem}\label{HilbertLowerBound}
Let $V\subseteq\Gm^n$ be an equidimensional variety of dimension $d=n-k$
and $m=k(\delta_0(V)-1)$.
Then, for any integer $\nu>m$, we have
\[
H(V;\nu)\geq
\begin{pmatrix}
\nu+d-m\\ d
\end{pmatrix}
\deg(V)
\mbox{.}
\]
\end{theorem}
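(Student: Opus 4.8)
The plan is to deduce the estimate from a bound on the Castelnuovo--Mumford regularity of the projective closure $\overline V\subseteq\mathbb{P}^n$, combined with a descent on the dimension by general hyperplane sections. Write $R=\overline{\mathbb{Q}}[\bfx]$, let $I=I_V$ be the radical ideal of $\overline V$, and put $e=\deg(V)$.

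The input I would take from \cite[Corollaire 3]{CP99} is the regularity bound: for an equidimensional $V$ of codimension $k$, $\operatorname{reg}(\overline V)\le k(\delta_0(V)-1)+1=m+1$. I would then induct on $d$. Since $I$ is radical, the irrelevant maximal ideal is not associated to $R/I$, so a general linear form $h$ is a non-zero-divisor on $R/I$; by Bertini in characteristic zero the section $V\cap\{h=0\}$ is again reduced, equidimensional of dimension $d-1$, and of degree $e$. From the exact sequence $0\to(R/I)(-1)\xrightarrow{\,h\,}R/I\to R/(I+(h))\to0$ and the inclusion $I+(h)\subseteq I_{V\cap\{h=0\}}$ I get, for every $\nu$,
\[
H(V;\nu)-H(V;\nu-1)=\dim\big(R/(I+(h))\big)_\nu\ \ge\ H\big(V\cap\{h=0\};\nu\big),
\]
and summing this from $1$ to $\nu$, together with $H(V;0)=1$, gives $H(V;\nu)\ge\sum_{i=0}^{\nu}H(V\cap\{h=0\};i)$. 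Iterating $d$ times reaches a general linear section $W$, which is a reduced set of $e$ points, and yields
\[
H(V;\nu)\ \ge\ \sum_{i=0}^{\nu}\binom{\nu-i+d-1}{d-1}\,H(W;i).
\]

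Finally, regularity does not increase along these sections (killing a degree-one non-zero-divisor cannot raise it, and passing from $I+(h)$ to its saturation $I_{V\cap\{h=0\}}$ cannot raise it either), so $\operatorname{reg}(W)\le\operatorname{reg}(\overline V)\le m+1$ and hence $H(W;i)=e$ for all $i\ge m$. Discarding the non-negative terms with $i<m$ and using the hockey-stick identity,
\[
H(V;\nu)\ \ge\ e\sum_{i=m}^{\nu}\binom{\nu-i+d-1}{d-1}\ =\ e\sum_{t=0}^{\nu-m}\binom{t+d-1}{d-1}\ =\ \binom{\nu+d-m}{d}\,e,
\]
which is the assertion (it even holds for $\nu\ge m$). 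I expect the only genuinely delicate point to be the very first step: all the depth sits in the regularity estimate of \cite{CP99}, and one has to be careful that the invariant occurring there really is the degree of incomplete definition $\delta_0(V)$ recalled above and that the regularity convention makes the relevant cohomology vanish precisely on the range $\nu\ge m$ (hence the statement for $\nu>m$); the Bertini and saturation inputs, the induction, and the combinatorial identity are routine.
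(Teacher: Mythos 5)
Your proof is correct, and it uses exactly the input the paper relies on: the paper states this theorem without proof, as a direct consequence of the regularity bound of Chardin--Philippon \cite[Corollaire 3]{CP99}, and your hyperplane-section argument (general linear forms as non-zero-divisors, the exact sequence, non-increase of regularity under sections and saturation, and the hockey-stick summation) is the standard derivation of the Hilbert-function lower bound from that regularity estimate, with the invariant in \cite{CP99} indeed being the degree of incomplete definition $\delta_0(V)$ as defined here. So this matches the paper's intended route; no gaps beyond the convention checks you already flagged.
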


In order to use these results in this article, we need effective upper bounds for $\delta_0(V')$ when $V'$ is a specific type of equidimensional variety.
Let us recall first an easy lemma for $\delta$.

\begin{lemma}\label{lemma36}
Let $X_1,\ldots,X_t$ be subvarieties of $\Gm^n$. Then
\[
\delta\big(\bigcup_{i=1}^t X_i\big)\leq \sum_{i=1}^t\delta(X_i)
\mbox{.}
\]
\end{lemma}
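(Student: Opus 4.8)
The plan is to exploit the fact that the total degree of a product of polynomials is the sum of their degrees. First I would unwind the definition of $\delta(X_i)$: for each $i$ there is a family of polynomials $(f_{i,j})_{j\in J_i}$ in $\overline{\mathbb{Q}}[X_1,\dots,X_n]$, each of degree at most $\delta(X_i)$, such that $X_i=\bigcap_{j\in J_i}\big(Z(f_{i,j})\cap\Gm^n\big)$. Then, ranging over all tuples $\mathbf{j}=(j_1,\dots,j_t)\in J_1\times\cdots\times J_t$, I would form the product polynomial
\[
g_{\mathbf{j}}=f_{1,j_1}\cdot f_{2,j_2}\cdots f_{t,j_t}\mbox{,}
\]
whose degree is at most $\sum_{i=1}^t\deg(f_{i,j_i})\le\sum_{i=1}^t\delta(X_i)$.

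The key step is the set-theoretic identity
\[
\bigcup_{i=1}^t X_i=\bigcap_{\mathbf{j}\in J_1\times\cdots\times J_t}\big(Z(g_{\mathbf{j}})\cap\Gm^n\big)\mbox{.}
\]
The inclusion $\subseteq$ is immediate: if $\bfx\in X_i$ for some $i$, then $f_{i,j_i}(\bfx)=0$ for every choice of $j_i$, hence every product $g_{\mathbf{j}}$ vanishes at $\bfx$. For the reverse inclusion I would argue by contraposition: if $\bfx$ lies in no $X_i$, then for each $i$ one can choose an index $j_i\in J_i$ with $f_{i,j_i}(\bfx)\ne0$, and then the product $g_{\mathbf{j}}$ attached to $\mathbf{j}=(j_1,\dots,j_t)$ does not vanish at $\bfx$, so $\bfx$ does not belong to the right-hand intersection. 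This exhibits $\bigcup_{i=1}^t X_i$ as an intersection of hypersurfaces of degree at most $\sum_{i=1}^t\delta(X_i)$, and the claimed bound $\delta\big(\bigcup_{i=1}^t X_i\big)\le\sum_{i=1}^t\delta(X_i)$ follows directly from the definition of the degree of definition.

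I do not expect any genuine obstacle here; the argument is purely formal. The only points worth a word of care are that the index sets $J_i$ may be infinite, so that the intersection over all tuples $\mathbf{j}$ is over a possibly infinite family — this is harmless, since the definition of $\delta$ allows arbitrary families of hypersurfaces (and by Noetherianity one could in any case reduce to finitely many) — and that ``hypersurface of degree at most $\delta$'' is to be read with respect to the total degree of a defining polynomial in $\overline{\mathbb{Q}}[X_1,\dots,X_n]$, equivalently the degree of its projective closure; this is precisely what makes the degrees add under the product.
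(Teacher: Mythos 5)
Your argument is correct and is essentially the paper's: the paper reduces to $t=2$ and defines $X_1\cup X_2$ by all products $f_ig_j$ of defining polynomials, which is exactly your product-over-tuples construction (done directly for general $t$ rather than by iterating the case $t=2$). The set-theoretic verification and the remark on degrees adding under products match the intended proof.
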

\begin{proof}
It is enough to prove it for $t=2$.
Let $X_1$ be defined by the polynomials $f_1,\ldots,f_r$ with $\deg(f_i)\leq\delta(X_1)$ and
equivalently let $X_2$ be defined $g_1,\ldots,g_s$ with $\deg(g_i)\leq\delta(X_2)$.
Then $X_1\cup X_2$ is defined by the polynomials $f_ig_j$ for $1\leq i\leq r$ and $1\leq j\leq s$.
\end{proof}

In general, this result is not true if we use $\delta_0$ instead of $\delta$.
To have a similar lemma for $\delta_0$, we must therefore consider more specific varieties.
The following is a variation of \cite[Lemma 2.5.]{AV12}.

\begin{lemma}\label{lemmadeltas}
Let $V$ be an irreducible subvariety of $\Gm^n$ defined over $\mathbb{K}$.
Let $T\subset\mu_\infty^n\times\Gal(\mathbb{K}/\mathbb{Q})$ be a finite family with $t$ elements.
Then
\[
\delta_0\big(
\bigcup_{(\bfg,\phi)\in T}
\bfg V^{\phi}
\big)
\leq
t\delta_0(V)
\mbox{.}
\]
\end{lemma}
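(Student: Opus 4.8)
The plan is to exhibit $V$ as an irreducible component of a low-degree intersection and then let the whole family $T$ act on the defining equations by multiplying them together: the degrees add up to at most $t=\card(T)$ times the original bound, while $\bigcup_{(\bfg,\phi)\in T}\bfg V^{\phi}$ is captured, with the correct components, inside the resulting zero locus. No induction on $\card(T)$ is used (and, as already observed, a naive one is not available for $\delta_0$): one treats the full family at once.

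Set $\delta_0=\delta_0(V)$ and fix an intersection $X=Z(f_1,\dots,f_r)$ of hypersurfaces of degree at most $\delta_0$ having the irreducible variety $V$ as a component. The first --- and most delicate --- point is to arrange that $X$ has no component of dimension larger than $\dim V$. That $X$ has no component of codimension one follows from the minimality of $\delta_0$: if $V$ is not itself a hypersurface, a common irreducible factor $h$ of $f_1,\dots,f_r$ could be cancelled, producing equations of strictly smaller degree still having $V$ (distinct from $Z(h)$) as a component. Excluding components of the intermediate dimensions uses the irreducibility of $V$ as well; for instance one takes for $X$ the intersection of all hypersurfaces of degree at most $\delta_0$ through $V$ and exploits that $V$ is a component of it. Granting this, replacing $f_1,\dots,f_r$ by $\codim(V)$ generic $\overline{\mathbb Q}$-linear combinations keeps $V$ as a component and, by Bertini's theorem together with the absence of a fixed component, renders $X$ equidimensional of dimension $\dim V$.

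Now enumerate $T=\{(\bfg_1,\phi_1),\dots,(\bfg_t,\phi_t)\}$. For each $i$ the variety $\bfg_iV^{\phi_i}$ is a component of $X_i:=\bfg_iX^{\phi_i}$, which is cut out by the polynomials $f_{i,l}(\bfx):=f_l^{\phi_i}(\bfg_i^{-1}\bfx)$, $l=1,\dots,r$; neither the torsion translation nor the Galois action alters the set of monomials occurring, so $\deg f_{i,l}\le\delta_0$ and each $X_i$ is again equidimensional of dimension $\dim V$. For every $\bflambda=(l_1,\dots,l_t)\in\{1,\dots,r\}^t$ set $F_{\bflambda}=\prod_{i=1}^{t}f_{i,l_i}$, so $\deg F_{\bflambda}\le t\delta_0$. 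A point lies in $Z(F_{\bflambda})$ for all $\bflambda$ exactly when, for every choice of indices, one of the factors vanishes at it, i.e. exactly when it lies in some $X_i$; hence $\bigcap_{\bflambda}Z(F_{\bflambda})=\bigcup_{i=1}^{t}X_i$, an intersection of hypersurfaces of degree at most $t\delta_0$ containing $\bigcup_{(\bfg,\phi)\in T}\bfg V^{\phi}$.

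Finally one checks that every irreducible component of $\bigcup_{(\bfg,\phi)\in T}\bfg V^{\phi}$ is a component of $\bigcup_{i=1}^{t}X_i$. Such a component is $\bfg_{i_0}V^{\phi_{i_0}}$ for some $i_0$; it is a component of $X_{i_0}$, and since each $X_i$ --- hence their union --- has all components of dimension at most $\dim V$, no irreducible subvariety of $\bigcup_iX_i$ strictly contains $\bfg_{i_0}V^{\phi_{i_0}}$. So $\bfg_{i_0}V^{\phi_{i_0}}$ is a component of $\bigcup_iX_i$, giving $\delta_0\big(\bigcup_{(\bfg,\phi)\in T}\bfg V^{\phi}\big)\le t\delta_0=t\,\delta_0(V)$. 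The step I expect to be the real obstacle is the preliminary reduction controlling the dimensions of the components of $X$: without it a translated conjugate of an extraneous higher-dimensional component of $X$ could swallow one of the $\bfg_iV^{\phi_i}$, and the degree-$\le t\delta_0$ intersection produced above would fail to exhibit it as a component; once that is secured, the product-of-equations trick and the dimension count are routine.
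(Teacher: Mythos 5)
The core of your argument --- translate and conjugate the defining equations, multiply one equation from each member of the family, and note that irreducible varieties of the same dimension cannot properly contain one another --- is sound and is essentially the paper's Lemma \ref{lemma36} mechanism. But everything hinges on your preliminary reduction, and that step is not proved; as you yourself suspect, it is the real obstacle, and it is not a consequence of the definition of $\delta_0$. Your common-factor argument only excludes components of codimension one. For the intermediate dimensions you offer no argument: taking $X$ to be the intersection of \emph{all} hypersurfaces of degree at most $\delta_0(V)$ through $V$ guarantees that $V$ is a component of $X$ (and this is the smallest admissible $X$), but the definition of $\delta_0$ is a condition along $V$ only and says nothing about the other components of this base locus; a priori $X$ may contain an extraneous component $W$ with $\dim W>\dim V$, namely whenever every polynomial of degree at most $\delta_0(V)$ vanishing on $V$ happens to vanish on $W$. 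Since every admissible $X$ contains this minimal one, such a $W$ would then sit inside \emph{every} choice of $X$, and the subsequent Bertini step cannot remove it: generic linear combinations of the $f_i$ still vanish on the whole base locus, so $Z(h_1,\dots,h_c)$ inherits any high-dimensional component of $X$. Once such a component exists, a translated conjugate of it can indeed swallow some $\bfg_i V^{\phi_i}$, which is exactly the failure mode you identify; so as written the proof has a genuine gap.

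The paper closes this gap by a different device that avoids any dimension control on $X$. It keeps an arbitrary $X$ realizing $\delta_0(V)$, forms the subgroup $\langle T\rangle\subset\mu_\infty^n\times\Gal(\mathbb{K}/\mathbb{Q})$, lets $S$ be the set of $(\bfg,\phi)\in\langle T\rangle$ for which $\bfg V^{\phi}$ is contained in $X$ without being a component, and replaces $X$ by $\widetilde{X}=X\cap\bigcap_{(\bfg,\phi)\in S}\bfg^{-1}X^{\phi^{-1}}$, which still has $V$ as a component and satisfies $\delta(\widetilde{X})=\delta(X)=\delta_0(V)$. An induction on powers of $(\bfg,\phi)$, using that these elements have finite order (so that $(\mathbf{1},\Id)$ would end up in $S$, contradicting that $V$ is a component of $X$), shows that no $\bfg V^{\phi}$ with $(\bfg,\phi)\in\langle T\rangle$ is embedded in $\widetilde{X}$; consequently each $\bfg V^{\phi}$, $(\bfg,\phi)\in T$, is a genuine component of $Y=\bigcup_{(\bfg,\phi)\in T}\bfg\widetilde{X}^{\phi}$, and Lemma \ref{lemma36} gives $\delta(Y)\le t\delta_0(V)$. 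If you want to salvage your route, you would have to prove your equidimensionality claim for some admissible $X$, which is precisely what this intersection-and-finite-order argument is designed to make unnecessary.
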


\begin{proof}
We say that an irreducible variety $W\subset\Gm^n$ is imbedded in a variety $X\subset\Gm^n$ if $W\subset X$ but $W$ is not an irreducible component of $X$.

By definition of $\delta_0(V)$, there exists a variety $X$ such that $V$ is an irreducible component of $X$ and $\delta_0(V)=\delta(X)$.

Let $\langle T\rangle \subset\mu_\infty^n\times\Gal(\mathbb{K}/\mathbb{Q})$ be the group generated by $T$
and let $S=\lbrace (\bfg,\phi)\in \langle T\rangle \mid\; \bfg V^\phi$
is imbedded in
$X\rbrace$. Consider
\[
\widetilde{X}=X\cap\big(\bigcap_{(\bfg,\phi)\in S}\bfg^{-1}X^{\phi^{-1}}\big)
\mbox{.}
\]
We have that $V$ is an irreducible component of $\widetilde{X}$ and $\delta(\widetilde{X})=\delta(X)=\delta_0(V)$.
Moreover, no $\bfg V^\phi$ is imbedded in $\widetilde{X}$, for $(\bfg,\phi)\in \langle T\rangle$.
Assume by contradiction that there is a $\bfg V^\phi$ imbedded in $\widetilde{X}$.
Since $\widetilde{X}\subset X$, $\bfg V^\phi$ is imbedded in $X$ and so $(\bfg,\phi)\in S$.
By induction, we suppose $(\bfg^n,\phi^n)\in S$ for some $n\geq 1$.
Then $\widetilde{X}\subset \bfg^{-n}X^{\phi^{-n}}$ and so $\bfg V^\phi$ is imbedded in $\bfg^{-n}V^{\phi^{-n}}$;
which implies $(\bfg^{n+1},\phi^{n+1})\in S$.
Therefore, $(\bfg^n,\phi^n)\in S$ for every $n\in\mathbb{N}_{>0}$.
In particular, taking $n=\lcm(\ord(\bfg),\ord(\phi))$ we will have $(\mathbf{1},\Id)\in S$ which is a contradiction.

Next we define
\[
Y=\bigcup_{(\bfg,\phi)\in T}
\bfg \widetilde{X}^{\phi}
\mbox{.}
\]
Then $\bigcup_{(\bfg,\phi)}\bfg V^{\phi}\subset Y$ and $\delta(Y)\leq t\delta(\widetilde{X})=t\delta_0(V)$ by Lemma \ref{lemma36}.
Moreover, no $\bfg V^{\phi}$ is imbedded in $Y$, for $(\bfg,\phi)\in T$.
Assume by contradiction that there is a $(\bfg,\phi)\in T$ such that $\bfg V^{\phi}$ is imbedded in $Y$.
Then, there exists some $(\bfg_0,\phi_0)\in T$ such that $\bfg V^{\phi}$ is imbedded in $\bfg_0\widetilde{X}^{\phi_0}$.
Thus $\bfg_0^{-1}\bfg V^{\phi_0^{-1}\phi}$ is imbedded in $\widetilde{X}$ and, since $(\bfg_0^{-1}\bfg,\phi_0^{-1}\phi)\in G$, this contradicts the definition of $\widetilde{X}$.
\end{proof}

Let $V\subset\Gm^n$ be any subvariety.
We say that $V$ is \emph{minimally defined} over $\mathbb{K}$,
if $\mathbb{K}$ is the minimal Galois extension of $\mathbb{Q}$ such that $V$ is defined over $\mathbb{K}$.

If $\mathbb{K}$ is an abelian extension, by the Kronecker-Weber theorem, we have
that $\mathbb{K}$ is contained in a cyclotomic extension of $\mathbb{Q}$.
In fact, there is a unique minimal cyclotomic extension $\mathbb{Q}(\zeta_N)$ containing $\mathbb{K}$
\cite[Theorem 4.27(v)]{Nar04}.
If $N\equiv 2\pmod 4$, then $\mathbb{Q}(\zeta_N)=\mathbb{Q}(\zeta_{N/2})$.
Therefore, we can always choose $N\not\equiv 2\pmod 4$.

The following lemma is a key ingredient in the proof of Theorem \ref{t21}.

\begin{lemma}\label{lemmaHilbert}
Let $V\subseteq\Gm^n$ be an irreducible variety of dimension $d=n-k$,
minimally defined over $\mathbb{K}$.
Let $\phi\in\Gal(\mathbb{K}/\mathbb{Q})$
and let $\bfe\in\mu_\infty^n$.
\begin{itemize}
\item[(a)]
If $\bfe V^\phi\neq  V$,
then there exists a homogeneous polynomial $F$ of degree at most $2k(2d+1)\delta_0(V)$
such that $F\equiv 0$ in $\bfe V^\phi$ and $F\not\equiv 0$ in $V$.
\item[(b)]
If $[2]^{-1}(\bfe V^\phi)\neq V$,
then there exists a homogeneous polynomial $G$ of degree at most $2^n k(2d+1)\delta_0(V)$
such that $G\equiv 0$ in $[2]^{-1}(\bfe V^\phi)$ and $G\not\equiv 0$ in $V$.
\end{itemize}
\end{lemma}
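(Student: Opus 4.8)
The plan is to produce the required form, in both cases, by a dimension count with the Hilbert function. Write $Y$ for the equidimensional variety $\bfe V^\phi$ in~(a), resp. $[2]^{-1}(\bfe V^\phi)$ in~(b); in both cases $\dim Y=d$. The hypothesis says exactly that $V$ is \emph{not} an irreducible component of $Y$, so $V\cup Y$ is equidimensional of dimension $d$ with $\deg(V\cup Y)=\deg(V)+\deg(Y)$, and a nonzero homogeneous form of degree $\nu$ vanishing on $Y$ but not on $V$ exists as soon as
\[
H(V\cup Y;\nu)>H(Y;\nu)\mbox{,}
\]
since then the degree-$\nu$ part of the homogeneous ideal of $Y$ strictly contains that of the ideal of $V\cup Y$, hence meets the complement of the ideal of $V$. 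I bound the right-hand side from above by Chardin's Theorem~\ref{HilbertUpperBound}, $H(Y;\nu)\le\binom{\nu+d}{d}\deg(Y)$, and the left-hand side from below by the Chardin--Philippon Theorem~\ref{HilbertLowerBound}, $H(V\cup Y;\nu)\ge\binom{\nu+d-m}{d}\bigl(\deg(V)+\deg(Y)\bigr)$ for every integer $\nu>m:=k(\delta_0(V\cup Y)-1)$. Thus it suffices to find $\nu>m$ in the prescribed range with
\[
\binom{\nu+d}{d}\Big/\binom{\nu+d-m}{d}<1+\frac{\deg(V)}{\deg(Y)}\mbox{,}
\]
and since $\binom{\nu+d}{d}/\binom{\nu+d-m}{d}=\prod_{i=1}^{d}\frac{\nu+i}{\nu+i-m}\le\bigl(1+\tfrac{m}{\nu+1-m}\bigr)^{d}$, this reduces to controlling $m$, an upper bound for $\deg(Y)$ and a lower bound for $\deg(V)/\deg(Y)$.

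In part~(a) take $Y=\bfe V^\phi$. Translation by a torsion point and the Galois action preserve dimension, irreducibility and degree, so $\deg(Y)=\deg(V)$ and we need the ratio to be $<2$. Lemma~\ref{lemmadeltas} applied to the two-element family $\{(\mathbf 1,\Id),(\bfe,\phi)\}$ gives $\delta_0(V\cup Y)\le 2\delta_0(V)$, hence $m\le 2k\delta_0(V)-k<\mu:=2k\delta_0(V)$. Choosing $\nu=2k(2d+1)\delta_0(V)=(2d+1)\mu>m$, each factor satisfies $\frac{\nu+i}{\nu+i-m}<\frac{\nu+i}{\nu+i-\mu}<1+\tfrac{1}{2d}$, so the product is $<(1+\tfrac1{2d})^{d}<e^{1/2}<2$, producing a form $F$ of degree $\nu\le 2k(2d+1)\delta_0(V)$. (If $d=0$ then $V$ and $\bfe V^\phi$ are distinct points and a linear form suffices.)

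In part~(b) take $Y=[2]^{-1}(\bfe V^\phi)$. Since $[2]\colon\Gm^n\to\Gm^n$ is finite and flat (and étale over $\Gm^n$), $Y$ is reduced and equidimensional of dimension $d$; and since $[2]^{*}\mathcal O_{\mathbb P^n}(1)=\mathcal O_{\mathbb P^n}(2)$, flat pull-back of cycles shows that $[2]^{-1}W$ has degree $2^{k}\deg(W)$ for every $W$ of codimension $k$, whence $\deg(Y)\le 2^{k}\deg(\bfe V^\phi)=2^{k}\deg(V)$ and $\deg(V)/\deg(Y)\ge 2^{-k}$; so it is enough to make the ratio $<1+2^{-k}$. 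Pulling back by $[2]$ an intersection of hypersurfaces of degree $\le\delta_0(V)$ witnessing $\delta_0(\bfe V^\phi)$ only doubles degrees, and flatness of $[2]$ shows the components of $[2]^{-1}(\bfe V^\phi)$ are again components of the pulled-back intersection; together with (a variant of) Lemma~\ref{lemmadeltas} this yields $\delta_0(V\cup Y)\le 2\delta_0(V)$, hence again $m<\mu:=2k\delta_0(V)$. Now $d\ge 1$ forces $k=n-d\le n-1$, so $\nu:=2^{n}k(2d+1)\delta_0(V)\ge 2^{k}(2d+1)\mu$ satisfies $\nu+1-\mu>2^{k+1}d\,\mu$, and each factor is $<1+\tfrac1{2^{k+1}d}$; hence the product is $<(1+\tfrac1{2^{k+1}d})^{d}\le e^{1/2^{k+1}}\le 1+2^{-k}$, the last step using $\log(1+t)\ge t-\tfrac{t^{2}}{2}\ge\tfrac t2$ for $t\in[0,1]$. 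This gives a form $G$ of degree $\nu\le 2^{n}k(2d+1)\delta_0(V)$. (If $d=0$ one takes $G$ to be a product of at most $2^{n}$ linear forms, one through each point of $[2]^{-1}(\bfe V^\phi)$ and none through $V$.)

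The binomial-ratio estimates are routine. The genuine obstacle is controlling $\delta_0$ of the auxiliary varieties: $\delta_0$ of a union is \emph{not} dominated by the sum of the $\delta_0$'s in general, so one must exploit the precise shape of $V\cup\bfe V^\phi$, resp. $V\cup[2]^{-1}(\bfe V^\phi)$ — unions of translates, Galois conjugates and étale $[2]$-pull-backs of a single irreducible variety — in order to apply Lemma~\ref{lemmadeltas} and obtain a bound of the correct order (and, for~(b), small enough to keep the exponent $2^{n}$). One also has to use the hypotheses in the sharp form ``$V$ is not an irreducible component of $Y$'' rather than merely ``$Y\neq V$'', which is what the count requires; in~(b) this non-containment is exactly where equidimensionality of $[2]^{-1}(\bfe V^\phi)$ enters.
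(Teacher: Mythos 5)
Part (a) of your proposal is correct and is essentially the paper's own argument: compare $H(\bfe V^\phi;\nu)$ with $H(V\cup \bfe V^\phi;\nu)$, bound $\delta_0(V\cup\bfe V^\phi)\leq 2\delta_0(V)$ by Lemma \ref{lemmadeltas} applied to $T=\{(\mathbf{1},\Id),(\bfe,\phi)\}$, and choose $\nu$ so that the binomial ratio is $<2$.

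Part (b) has a genuine gap, and it sits exactly at the point you defer to ``(a variant of) Lemma \ref{lemmadeltas}''. Your comparison variety is $V\cup[2]^{-1}(\bfe V^\phi)$, and you need $\delta_0\bigl(V\cup[2]^{-1}(\bfe V^\phi)\bigr)\leq 2\delta_0(V)$. Lemma \ref{lemmadeltas} does not give this: it applies only to unions of torsion translates of Galois conjugates of the \emph{single} irreducible variety $V$, whereas your union involves two distinct irreducible varieties ($V$ and a component of the $[2]$-preimage), and its proof mechanism breaks for the operation $[2]^{-1}$: the argument saturates $X$ under the finite group generated by the pairs $(\bfg,\phi)$, using that these operations are invertible, of finite order, and preserve $\delta$, while $[2]^{-1}$ has none of these properties (each application doubles the defining degree and the process never cycles back), so the embedded-component obstruction — e.g.\ $V$ embedded in a component of the pulled-back witness $[2]^{-1}(\bfe\widetilde{X}^\phi)$, or a component of $[2]^{-1}(\bfe V^\phi)$ embedded in the witness for $V$ — is not removed by any routine variant. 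Even ignoring embeddings, the natural union bound (Lemma \ref{lemma36} applied to a witness for $V$ together with $[2]^{-1}$ of a witness for $\bfe V^\phi$) gives $3\delta_0(V)$, not $2\delta_0(V)$, so the constant you assert is unsubstantiated as well. The paper avoids this entirely: in (b) it does not compare with $V\cup[2]^{-1}(\bfe V^\phi)$ but with $W'=E\cdot V$, where $E=\{\bfe_0\in\mu_\infty^n:\bfe_0^2\in\Stab(V)\}$, a union of at most $2^n$ torsion translates of $V$, so Lemma \ref{lemmadeltas} applies verbatim and yields $m\leq 2^nk\delta_0(V)$ (the source of the factor $2^n$ in the bound); the needed strict inequality $H([2]^{-1}(\bfe V^\phi);\nu)<H(W';\nu)$ then comes from the degree gap $2^r$ versus $2^k$ with $r=\codim(\Stab(V))>k$, and finally the polynomial $\widetilde{G}$ obtained this way is translated by a suitable $\bfe_0\in E$ (using $\bfe_0\in\Stab([2]^{-1}(\bfe V^\phi))$ and $[2]\bfe_0\in\Stab(V)$) to produce $G$ vanishing on $[2]^{-1}(\bfe V^\phi)$ but not on $V$. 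If you want to keep your direct route, you must actually prove a bound on $\delta_0$ of a union mixing $V$ with $[2]$-preimages, which is not a cosmetic variant of Lemma \ref{lemmadeltas}. A minor further point: in (b) you read the hypothesis as ``$V$ is not a component of $[2]^{-1}(\bfe V^\phi)$''; since the preimage is reducible this is strictly stronger than the stated $[2]^{-1}(\bfe V^\phi)\neq V$, and it is needed for $\deg(V\cup Y)=\deg(V)+\deg(Y)$, so it should be stated and justified in the setting where the lemma is applied.
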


\begin{proof}
\mbox{}
\begin{itemize}
\item[(a)]
Since $V$ is an irreducible variety, $\bfe V^\phi$ is also irreducible and of the same degree.
By Theorem \ref{HilbertUpperBound} we get, for any $\nu\in\mathbb{N}$,
\[
H(\bfe V^\phi;\nu)\leq
	\begin{pmatrix}
		\nu+d\\ d
	\end{pmatrix}
\deg(V)
\mbox{.}
\]

On the other hand, let $V'=V\cup \bfe V^\phi$.
This is a $d$-equidimensional variety of degree $2\deg(V)$.
Thereby, using Theorem \ref{HilbertLowerBound} we have, for any $\nu>m$,
\[
H(V';\nu)\geq
	\begin{pmatrix}
		\nu+d-m\\d
	\end{pmatrix}
2\deg(V)
\mbox{,}
\]
where $m=k(\delta_0(V')-1)$.
In particular, $m\leq 2k\delta_0(V)$ due to Lemma \ref{lemmadeltas}.

Taking $\nu=m(2d+1)$ we obtain
\[
\begin{pmatrix}
	\nu+d\\ d
\end{pmatrix}
\begin{pmatrix}
	\nu+d-m\\ d
\end{pmatrix}^{-1}
\leq
\left(
	1+\frac{m}{\nu-m}
\right)^d
=
\left(
	1+\frac{1}{2d}
\right)^d
\leq
\mathrm{e}^{1/2}
<2
\mbox{.}
\]
Hence, $H(\bfe V^\phi;\nu)<H(V';\nu)$.

This means that there exists a homogeneous polynomial $F$ of degree $\nu$ such that
$F\equiv 0$ on $\bfe V^\phi$ and $F\not\equiv 0$ on $V'=\bfe V^\phi \cup V$.
In particular $F\not\equiv 0$ on $V$.
Moreover, $\deg(F)=\nu\leq 2k(2d+1)\delta_0(V)$,
which proves (a).

\vspace*{2mm}

\item[(b)]
Let $W=[2]^{-1}(\bfe V^\phi)$.
This is a $d$-equidimensional variety of degree $2^k\deg(V)$.
By Theorem \ref{HilbertUpperBound} we get, for any $\nu\in\mathbb{N}$,
\[
H(W;\nu)\leq
	\begin{pmatrix}
		\nu+d\\ d
	\end{pmatrix}
2^k\deg(V)
\mbox{.}
\]

On the other hand,
let $E=\lbrace \bfe_0\in\mu_\infty^n \mid\; \bfe_0^2\in\Stab(V)\rbrace$
and let $W'=E\cdot V$.
This is also a $d$-equidimensional variety of degree $2^r\deg(V)$, for some $k<r\leq n$.
That is because $E/\Stab(V)\simeq\mu_2^r$
(see Corollary \ref{remark}, with $r=\codim(\Stab(V))$ and $E=\varphi^{-1}(\mu_2^r)$).
Thereby, using Proposition \ref{HilbertLowerBound} we have, for any $\nu>m$,
\[
H(W';\nu)\geq
	\begin{pmatrix}
		\nu+d-m\\d
	\end{pmatrix}
2^r\deg(V)
\mbox{,}
\]
where $m=k(\delta_0(W')-1)$.
In particular, $m\leq 2^{n}k\delta_0(V)$ due to Lemma \ref{lemmadeltas}.

Taking $\nu=m(2d+1)$, we obtain:
\[
\begin{pmatrix}
	\nu+d\\ d
\end{pmatrix}
\begin{pmatrix}
	\nu+d-m\\ d
\end{pmatrix}^{-1}
\leq
\mathrm{e}^{1/2}
<2^{r-k}
\mbox{.}
\]
Hence, $H(W;\nu)<H(W',\nu)$.

This means that there exists a homogeneous polynomial $\widetilde{G}$ of degree $\nu$ such that
$\widetilde{G}\equiv 0$ on $W=[2]^{-1}(\bfe V^\phi)$ and
$\widetilde{G}\not\equiv 0$ on $W'=E\cdot V$.
In particular, there exists $\bfe_0\in E$ such that
$\widetilde{G}\not\equiv 0$ on $\bfe_0 V$.

Let $G(x)=\widetilde{G}(\bfe_0^{-1} x)$.
We have that $G\equiv 0$ on $\bfe_0[2]^{-1}(\bfe V)$.
By definition of $E$, $\bfe_0\in\Stab([2]^{-1}(\bfe V))$.
Hence, $G\equiv 0$ on $[2]^{-1}(\bfe V)$.
We also have that
$G\not\equiv 0$ on $[2]\bfe_0\cdot V$.
Since $[2]\bfe_0\in\Stab(V)$, this implies  $G\not\equiv 0$ on $V$.
Moreover, $\deg(G)=\nu\leq 2^nk(2d+1)$,
which proves (b).
\end{itemize}
\end{proof}

\section{Algebraic interpolation}
We generalize \cite[Lemma 1]{BS02} to general dimension $n$ and to any abelian extension of $\mathbb{Q}$ with the following result.
\begin{lemma}\label{lemma21}
Let $V\subset\Gm^n$ be a variety defined over $\mathbb{Q}(\zeta_N)$, with $N\not\equiv 2\pmod{4}$,
and let $\bfomega\in V$ be a torsion point.
\begin{itemize}

\item[1.]
If $4\nmid N$,
then one of the following is true:
\begin{itemize}
\item[(a)]
there exists $\bfeta\in\mu_2^n\setminus\lbrace \mathbf{1}\rbrace$ such that $\bfeta\cdot \bfomega\in V$;
\item[(b)]
there exists $\bfeta\in\mu_2^n$ such that $\bfeta\cdot [2]\bfomega\in V^{\sigma}$,
where $\sigma\in\Gal(\mathbb{Q}(\zeta_N)/\mathbb{Q})$ maps $\zeta_N\mapsto\zeta_N^2$.
\end{itemize}

\item[2.]
If $N=4N'$,
then one of the following is true:
\begin{itemize}
\item[(c)]
there exists $\bfeta\in\mu_2^n\setminus\lbrace \mathbf{1}\rbrace$ such that $\bfeta\cdot \bfomega\in V$;
\item[(d)]
there exists $\bfeta\in\mu_2^n$ such that $\bfeta\cdot \bfomega\in V^{\tau}$,
where $\tau\in\Gal(\mathbb{Q}(\zeta_N)/\mathbb{Q})$ maps $\zeta_N\mapsto\zeta_N^{1+2N'}$.
\end{itemize}

\end{itemize}
\end{lemma}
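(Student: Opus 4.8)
The plan is to exploit the Galois action on the torsion point $\bfomega$. Write $M=\ord(\bfomega)=2^{e}M'$ with $M'$ odd, put $L=\lcm(N,M)$, and for every integer $a$ coprime to $L$ let $\psi_{a}\in\Gal(\mathbb{Q}(\zeta_{L})/\mathbb{Q})$ be the automorphism $\zeta_{L}\mapsto\zeta_{L}^{a}$. Since every coordinate of $\bfomega$ is an $L$-th root of unity, $\psi_{a}(\bfomega)=[a]\bfomega$; and since $\psi_{a}$ restricts on $\mathbb{Q}(\zeta_{N})$ to $\zeta_{N}\mapsto\zeta_{N}^{a}$ (which depends only on $a\bmod N$, and equals $\sigma$ when $a\equiv 2$, resp.\ $\tau$ when $a\equiv 1+2N'$), we get $[a]\bfomega\in V^{\psi_{a}|_{\mathbb{Q}(\zeta_{N})}}$. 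Hence the lemma will follow once I exhibit, in each case, an integer $a$ coprime to $L$ with the prescribed residue modulo $N$ (namely $1$ for (a) and (c), $2$ for (b), $1+2N'$ for (d)) such that $[a]\bfomega$ differs from $[1]\bfomega$ — resp.\ from $[2]\bfomega$ in case (b) — by an element of $\mu_{2}^{n}$, this difference being moreover nontrivial in cases (a) and (c). Throughout I will use the elementary equivalences $[a]\bfomega\,[j]\bfomega^{-1}\in\mu_{2}^{n}\iff M\mid 2(a-j)$ and $[a]\bfomega=\bfomega\iff M\mid a-1$.

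For \emph{Case~1} ($N$ odd) I would split on $e$. If $e\le 1$, take $a=2+\lcm(N,M')$: it is odd, coprime to $L$ (every odd prime dividing $L$ divides $\lcm(N,M')$, hence divides $a-2$), congruent to $2$ modulo $N$, and $M=2^{e}M'\mid 2(a-2)=2\lcm(N,M')$ because $e\le 1$ and $M'\mid a-2$; this gives (b), with $\bfeta=[\lcm(N,M')]\bfomega$. If $e\ge 2$, take $a=1+2^{e-1}\lcm(N,M')$: it is odd, coprime to $L$, congruent to $1$ modulo $N$, satisfies $M\mid 2(a-1)$ but $M\nmid a-1$ (the $2$-adic valuation of $a-1$ being $e-1<e$); this gives (a).

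For \emph{Case~2} ($N=4N'$) set $f=v_{2}(N')$. If $e\le f+2$, use the Chinese Remainder Theorem to pick $a$ with $a\equiv 1+2N'\pmod{4N'}$ and $2^{\max(e-1,0)}M'\mid a-1$: these are compatible precisely because $\gcd(4N',2^{\max(e-1,0)}M')$ divides $2N'$ when $e\le f+2$, any such $a$ is automatically coprime to $L$ (the two congruences force $a\equiv 1$ modulo every odd prime of $L$, and $a$ odd), and $M\mid 2(a-1)$; this gives (d). If $e\ge f+3$, take $a=1+\lcm(4N',2^{e-1}M')$: since $e-1\ge v_{2}(4N')$ this $a$ is coprime to $L$, congruent to $1$ modulo $N$, and satisfies $M\mid 2(a-1)$ but $M\nmid a-1$; this gives (c). As the ranges $e\le f+2$ and $e\ge f+3$ partition $\{e\ge 0\}$, one alternative always applies.

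I expect the one genuinely delicate point to be the $2$-adic bookkeeping in Case~2: conclusion (c) forces $v_{2}(a-1)=e-1$ while also demanding $4N'\mid a-1$, so it is reachable only when $e-1\ge v_{2}(4N')$; conclusion (d) forces $v_{2}(a-1)=v_{2}(2N')=f+1$, so the divisibility $2^{e-1}\mid a-1$ it needs is possible only when $e\le f+2$. The fact that these two regimes overlap just enough to cover every value of $e$ is exactly what makes the dichotomy in part~2 work (and an analogous, but easier, remark governs part~1). Everything else reduces to verifying congruences and prime-to-$L$ conditions, which is routine. As a consistency check one should note that when $4\nmid M$ conclusions (a) and (c) are genuinely impossible: any prime-to-$L$ integer $a\equiv 1\pmod N$ with $M\mid 2(a-1)$ then automatically satisfies $M\mid a-1$, i.e.\ $[a]\bfomega=\bfomega$; so the stated alternatives are sharp.
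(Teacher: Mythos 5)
Your proposal is correct and takes essentially the same route as the paper: both proofs act on $\bfomega$ by an automorphism of the compositum cyclotomic field $\mathbb{Q}(\zeta_{\lcm(N,\ord(\bfomega))})$ chosen to restrict to $\Id$, $\sigma$ or $\tau$ on $\mathbb{Q}(\zeta_N)$, and split into cases according to the $2$-adic valuation of $\ord(\bfomega)$ relative to $N$ (your explicit exponents $a$ coincide, up to the CRT packaging in case (d), with the paper's choices $1+2M'$, $2+M'$, $2$, $1+2M'$). No gaps; the $2$-adic bookkeeping you flag is handled correctly.
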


Note that the case $\mathbb{K}=\mathbb{Q}$ is included in case 1,
corresponding to $N=1$
(so $\sigma$ is the identity).

\begin{proof}
Let $l$ be the order of $\bfomega$, in particular $\bfomega\in\mathbb{Q}(\zeta_l)$.

\vspace*{2mm}

1.\quad
By hypothesis, $N$ is odd.
Let $M=\lcm (N,l)$.
We distinguish $3$ cases.

\begin{itemize}
\item[(i)]
If $l=4l'$, then $M=4M'$.
In particular, we have $\gcd(1+2M',M)=1$.
Therefore, we can take a Galois automorphism $\tau\in\Gal(\mathbb{Q}(\zeta_M)/\mathbb{Q})$
mapping $\zeta_M\mapsto\zeta_M^{1+2M'}$.
Since  $2M'\equiv 2l'\pmod l$,
we have that $\tau$ maps $\zeta_l\mapsto\zeta_l^{1+2l'}$.
On the other hand, $N$ is odd so $N|M'$
and $\zeta_N$ is invariant by the action of $\tau$.
Hence $V^\tau=V$ and $[1+2l']\bfomega\in V$.
Choosing $\bfeta=[2l']\bfomega\in\mu_2^n\setminus\lbrace\mathbf{1}\rbrace$, (a) holds.

\item[(ii)]
If $l=2l'$ with $2\nmid l'$,
then $M=2M'$ with $2\nmid M'$.
In particular, we have $\gcd(2+M',M)=1$.
Therefore, we can extend $\sigma$ to a Galois automorphism in $\Gal(\mathbb{Q}(\zeta_M)/\mathbb{Q})$,
mapping $\zeta_M\mapsto\zeta_M^{2+M'}$
(this extends $\sigma$ because $N$ is odd and so $N\mid M'$).
Since $M'\equiv l'\pmod l$,
we have that $\sigma$ maps $\zeta_l\mapsto\zeta_l^{2+l'}$.
Hence $[2+l']\bfomega\in V^\sigma$.
Choosing $\bfeta=[l']\bfomega\in\mu_2^n\setminus\lbrace \mathbf{1}\rbrace$, (b) holds.

\item[(iii)]
If $2\nmid l$, then $2\nmid M$.
We have that $\sigma$ can be extended to a Galois automorphism in $\Gal(\mathbb{Q}(\zeta_M)/\mathbb{Q})$
mapping $\zeta_M\mapsto \zeta_M^2$.
In particular, $\sigma$ maps $\zeta_l\mapsto\zeta_l^2$.
Hence $[2]\bfomega\in V^\sigma$.
Choosing $\bfeta=\mathbf{1}$, (b) holds.
\end{itemize}

\vspace*{2mm}

2.\quad
Let $M=4M'=\lcm(N,l)$, and
$\widetilde{\tau}$ be an automorphism in $\Gal(\mathbb{Q}(\zeta_M)/\mathbb{Q})$
mapping $\zeta\mapsto\zeta_M^{2M'+1}$. 
We distinguish $2$ cases.
\begin{itemize}

\item[(i)]
If $N\mid 2M'$, then $l\nmid 2M'$ (otherwise, we would have $\lcm(N,l)=2M'$).
Since $2M'\equiv 2l'\pmod l$,
we have that $\widetilde{\tau}$ maps $\zeta_l\mapsto\zeta_l^{1+2l'}$.
On the other hand, $2M'\equiv 0\pmod N$
and so $\widetilde{\tau}$ maps $\zeta_N\mapsto\zeta_N$.
Hence $V^{\widetilde{\tau}}=V$ and $[1+2l']\bfomega\in V$.
Choosing $\bfeta=[2l']\bfomega\in\mu_2^n\setminus\lbrace\mathbf{1}\rbrace$, (c) holds.

\item[(ii)]
If $N\nmid 2M'$, then $2N'\equiv 2M'\pmod N$.
We have that $\widetilde{\tau}$ maps $\zeta_N\mapsto\zeta_N^{1+2N'}$ thus
$\widetilde{\tau}_{\mid \mathbb{Q}(\zeta_N)}=\tau$.
Hence $[2M'+1]\bfomega=[2l'+1]\bfomega\in V^{\tau}$.
Choosing $\bfeta=[2M']\bfomega\in\mu_2^n$, (d) holds.
\end{itemize}

\end{proof}

As a consequence of this lemma, for any irreducible variety $V$ we can find another variety $V'$ containing the torsion subvariety of $V$ but not containing $V$.

\begin{proposition}\label{proposition21}
Let $V\subset\Gm^n$ be an irreducible variety,
minimally defined over $\mathbb{K}$ such that $\Vtors\neq V$.
Let $r=\codim (\Stab(V))$ and let $\varphi :\Gm^n\rightarrow\Gm^r$ be a homomorphism\footnote{This homomorphism exists by Corollary \ref{homomorphism}.} such that $\Stab(V)=\Ker(\varphi )$.

\begin{itemize}
\item[1.]
If $\mathbb{K}$ is abelian and $\mathbb{Q}(\zeta_N)$ is a cyclotomic extension of $\mathbb{K}$,
with $4\nmid N$.
Then 
\[
\Vtors\subset V'=
\displaystyle\bigcup_{\bfeta\in\mu_2^r\setminus\lbrace \mathbf{1}\rbrace} (\varphi^{-1}(\bfeta) V)
\cup
\displaystyle\bigcup_{\bfeta\in\mu_2^r} [2]^{-1}(\varphi^{-1}(\bfeta) V^\sigma)
\mbox{,}
\]
where $\sigma\in\Gal(\mathbb{Q}(\zeta_N)/\mathbb{Q})$, mapping $\zeta_N\mapsto\zeta_N^2$.
Moreover $V'\cap V\subsetneq V$.

\item[2.]
If $\mathbb{K}$ is abelian and $\mathbb{Q}(\zeta_N)$ is its minimal cyclotomic extension,
with $N=4N'$.
Then 
\[
\Vtors\subset V'=
\displaystyle\bigcup_{\bfeta\in\mu_2^r\setminus\lbrace \mathbf{1}\rbrace} (\varphi^{-1}(\bfeta) V)
\cup
\displaystyle\bigcup_{\bfeta\in\mu_2^r} (\varphi^{-1}(\bfeta) V^\tau)
\mbox{,}
\]
where $\tau\in\Gal(\mathbb{Q}(\zeta_N)/\mathbb{Q})$, mapping $\zeta_N\mapsto\zeta_N^{1+2N'}$.
Moreover $V'\cap V\subsetneq V$.

\item[3.]
If $\mathbb{K}$ is not abelian.
Then $\Vtors\subset V\cap V^\varsigma \subsetneq V$,
for any $\varsigma\in\Gal(\mathbb{K}/\Qab\cap\mathbb{K})$
such that $\varsigma\neq\Id$.
\end{itemize}
\end{proposition}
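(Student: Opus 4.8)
The plan is to deduce this proposition from Lemma \ref{lemma21} by a compactness/finiteness argument together with the properties of $\varphi$ recorded in Corollary \ref{remark}. First I would observe that $\Vtorss$ is dense in $\Vtors$, so it suffices to show that every torsion point $\bfomega\in V$ lies in the proposed set $V'$; since $V'$ is Zariski closed, its closure $\Vtors$ will then be contained in $V'$ as well. Fix such a torsion point $\bfomega\in V$ and let $\bfeta'=\varphi(\bfomega)\in\Gm^r$, a torsion point of $\Gm^r$.

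For cases 1 and 2, the idea is to apply Lemma \ref{lemma21} not to $V$ directly but to $\varphi(V)\subset\Gm^r$: note that $\varphi(V)$ is defined over the same field $\mathbb{K}$ (hence over $\mathbb{Q}(\zeta_N)$) and $\bfeta'=\varphi(\bfomega)\in\varphi(V)$ is a torsion point. In case 1 ($4\nmid N$), Lemma \ref{lemma21}.1 gives either $(a)$ some $\bfeta\in\mu_2^r\setminus\{\mathbf 1\}$ with $\bfeta\cdot\bfeta'\in\varphi(V)$, or $(b)$ some $\bfeta\in\mu_2^r$ with $\bfeta\cdot[2]\bfeta'\in\varphi(V)^\sigma=\varphi(V^\sigma)$ (the automorphism $\sigma$ commutes with $\varphi$ since $\varphi$ is defined over $\mathbb{Q}$). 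In subcase $(a)$, applying $\varphi^{-1}$ and using Corollary \ref{remark}(ii)–(iii): from $\bfeta\bfeta'\in\varphi(V)$ we get $\varphi^{-1}(\bfeta\bfeta')\subset\varphi^{-1}(\varphi(V))=V$, and since $\bfomega\in\varphi^{-1}(\bfeta')$ we have $\varphi^{-1}(\bfeta\bfeta')=\varphi^{-1}(\bfeta)\cdot\bfomega\ni$ a point showing $\bfomega\in\varphi^{-1}(\bfeta)^{-1}V=\varphi^{-1}(\bfeta^{-1})V=\varphi^{-1}(\bfeta)V$ (as $\bfeta\in\mu_2^r$). In subcase $(b)$, $[2]\bfomega\in\varphi^{-1}(\bfeta)V^\sigma$ by the same reasoning, hence $\bfomega\in[2]^{-1}(\varphi^{-1}(\bfeta)V^\sigma)$. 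Either way $\bfomega\in V'$. Case 2 is entirely parallel, using Lemma \ref{lemma21}.2 with $\tau$ in place of $\sigma$ and no $[2]^{-1}$ needed. For case 3, when $\mathbb{K}$ is not abelian, one picks any $\varsigma\in\Gal(\mathbb{K}/\Qab\cap\mathbb{K})$, $\varsigma\neq\Id$; every torsion point $\bfomega\in V$ has coordinates in $\Qab$, hence is fixed by $\varsigma$, so $\bfomega\in V^\varsigma$ as well, giving $\Vtorss\subset V\cap V^\varsigma$ and then $\Vtors\subset V\cap V^\varsigma$ by taking closures.

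It remains to prove the strict inclusions $V'\cap V\subsetneq V$ (cases 1,2) and $V\cap V^\varsigma\subsetneq V$ (case 3). For case 3, if $V\subset V^\varsigma$ then by dimension and irreducibility $V=V^\varsigma$, so $V$ is defined over the fixed field of $\varsigma$, contradicting minimality of $\mathbb{K}$ unless $\varsigma=\Id$. For cases 1 and 2 the obstruction is showing $V$ is not contained in $V'$: each piece of $V'$ is of the form $\varphi^{-1}(\bfeta)V$ with $\bfeta\neq\mathbf 1$, or $[2]^{-1}(\varphi^{-1}(\bfeta)V^\sigma)$, or $\varphi^{-1}(\bfeta)V^\tau$. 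If $V$ were contained in the union, then by irreducibility $V$ would sit inside one of these pieces, and — comparing dimensions, which are equal — $V$ would \emph{equal} one of them. Containment $V=\varphi^{-1}(\bfeta)V$ with $\bfeta\in\mu_2^r\setminus\{\mathbf 1\}$ forces $\varphi^{-1}(\bfeta)\subset\Stab(V)=\Ker(\varphi)$, so $\bfeta=\mathbf 1$, a contradiction. Containment $V=\varphi^{-1}(\bfeta)V^\tau$ would make $V$ and $V^\tau$ differ by an element of $\Stab(V)$, so $V^\tau=V$, again contradicting minimality of $\mathbb{K}$ (as $\tau\neq\Id$ on $\mathbb{Q}(\zeta_N)\supset\mathbb{K}$, using that $\mathbb{Q}(\zeta_N)$ is the \emph{minimal} cyclotomic field over $\mathbb{K}$ in case 2, and a slightly more careful argument in case 1). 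The case $V=[2]^{-1}(\varphi^{-1}(\bfeta)V^\sigma)$ is the main obstacle: here one compares degrees — $\deg([2]^{-1}W)=2^{\codim W}\deg W$ — so this equality forces $2^k\deg(V)=\deg(V)$, impossible since $k=\codim V\geq 1$ (the hypothesis $\Vtors\neq V$ guarantees $V\neq\Gm^n$, so $k\geq 1$). This degree count is the crux and disposes of the last piece cleanly.
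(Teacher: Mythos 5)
Your reduction of the containment $\Vtorss\subset V'$ to Lemma \ref{lemma21} applied to $\varphi(V)$, and your treatment of case 3 and of the pieces $\varphi^{-1}(\bfeta)V$, follow the paper. The genuine gap is exactly at the step you call the crux. From $V\subset[2]^{-1}(\varphi^{-1}(\bfeta)V^\sigma)$ and equality of dimensions you conclude $V=[2]^{-1}(\varphi^{-1}(\bfeta)V^\sigma)$ and then contradict the degree formula $\deg([2]^{-1}W)=2^{\codim W}\deg W$. But $[2]^{-1}(\varphi^{-1}(\bfeta)V^\sigma)$ is in general a \emph{reducible} equidimensional variety; an irreducible $V$ of the same dimension contained in it is only one of its irreducible components, so no equality of varieties (and no identity $2^k\deg V=\deg V$) follows. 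What the containment actually gives is $[2]V=\bfe_0 V^\sigma$ for some $\bfe_0\in\varphi^{-1}(\bfeta)$, and excluding this is a real task: it is the content of the paper's argument, not something the degree of the full preimage settles. The paper rules it out by noting that any $\bfe_0\in\varphi^{-1}(\mu_2^r)$ satisfies $\bfe_0^2\in\Stab(V)=\Stab(V^\sigma)$, hence stabilizes $[2]^{-1}(\varphi^{-1}(\bfeta)V^\sigma)$; so if $V$ were contained in this preimage, all $2^r$ distinct translates $\varphi^{-1}(\bfeta_0)V$, $\bfeta_0\in\mu_2^r$, would be too, forcing its degree to be at least $2^r\deg(V)$, while by Hindry's lemma it equals $2^k\deg(V)$ with $k<r$. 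You would need this (or an equivalent rigidity argument showing $[2]V$ cannot be a torsion translate of $V^\sigma$) to close the proof.

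A smaller but related slip occurs in your case 2: from $V=\varphi^{-1}(\bfeta)V^\tau=\bfe_0V^\tau$ with $\bfeta\in\mu_2^r$ you infer that $V$ and $V^\tau$ ``differ by an element of $\Stab(V)$, so $V^\tau=V$.'' For $\bfeta\neq\mathbf{1}$ the translator $\bfe_0$ is precisely \emph{not} in $\Stab(V)=\Ker(\varphi)$, so all you get is that $V^\tau$ is a nontrivial torsion translate of $V$, and minimality of $\mathbb{K}$ (or of $N$) by itself does not exclude that; this case needs a separate argument rather than the appeal to $V^\tau\neq V$.
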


Note that the $V'$ in the proposition are finite unions of varieties.
That is because, using Corollary \ref{remark}(iii), for each $\bfeta\in\mu_2^r$ it suffices to take just one preimage $\bfeta_0\in\varphi^{-1}(\bfeta)$ instead of the the whole $\varphi^{-1}(\bfeta)$.

\begin{proof}
\mbox{}
1.\quad
Let $\Vtorss=\lbrace \bfomega'\in V \mid \bfomega' \mbox{ torsion point}\rbrace$.
It is enough to see that $\Vtorss\subset V'$ to prove $\Vtors\subset V'$.
To show this, we take $\bfomega'\in \Vtorss$ and we have that $\varphi(\bfomega')$ is a torsion point in $\varphi(V)$.
Since $\varphi(V)$
is defined over $\mathbb{Q}(\zeta_N)$ with $N$ odd,
we can apply point 1 in Lemma \ref{lemma21} to $\varphi(V)$, hence one of the following is true:

\begin{itemize}
\item[(a)]
There exists $\bfeta\in\mu_2^r\setminus\lbrace\mathbf{1}\rbrace$ such that
$\varphi(\bfomega')\in \bfeta \varphi(V)$.
Hence $\bfomega'\in\varphi^{-1}(\bfeta) V$.
By definition of $\varphi$ we have that
$\varphi^{-1}(\bfeta)\not\in\Stab(V)$ and so
$(\varphi^{-1}(\bfeta) V)\cap V\subsetneq V$.

\item[(b)]
There exists $\bfeta\in\mu_2^r$ such that
$[2](\varphi(\bfomega'))\in \bfeta \varphi(V)^\sigma$.
So $\bfomega'\in[2]^{-1}(\varphi^{-1}(\bfeta) V^\sigma)$.
Moreover, we have that $[2]^{-1}(\varphi^{-1}(\bfeta) V^\sigma)\cap V\subsetneq V$.
We prove this by contradiction.
Assume $V\subset [2]^{-1}(\varphi^{-1}(\bfeta) V^\sigma)$:
this means that for every $\bfeta_0\in\mu_2^r$ we have $\varphi^{-1}(\bfeta_0)\in\Stab([2]^{-1}(\varphi^{-1}(\bfeta) V^\sigma)$ and therefore
\[
\bigcup_{\bfeta_0\in\mu_2^r}\varphi^{-1}(\bfeta_0) V\subset [2]^{-1}(\varphi^{-1}(\bfeta) V^\sigma)
\mbox{.}
\]
Since $\varphi^{-1}(\mu_2^r)\cap\Stab(V)=\lbrace\mathbf{1}\rbrace$ (see (a) above) and all the translates of $V$ have the same stabilizer,
we have that that $\deg(\bigcup\varphi^{-1}(\bfeta_0) V)=2^r\deg(V)$.
On the other hand, we have $\deg([2]^{-1}(\varphi^{-1}(\bfeta) V^\sigma))=2^k\deg(V)$, see \cite[Lemme~6(i)]{Hin88}.
Since $k<r$, this leads to a contradiction.
\end{itemize}

Thereby $\bfomega'\in V'$ for every $\bfomega'\in \Vtorss$ and thus $\Vtors\subset V'$.
Moreover, $V'\cap V\subsetneq V$.

2.\quad
The proof follows similar to the previous one, using point 2 in Lemma \ref{lemma21}.
Note that in this case
we need the minimality of $N$ to guarantee that $V^\tau\neq V$ and therefore that $V\cap V'\neq V$.

3.\quad
Every torsion coset is defined over $\Qab$ and $\Qab$ is invariant by $\varsigma$.
Hence, for every torsion coset $\bfomega H\subset V$,
we have that $\bfomega H\subset V^{\varsigma}$ and $V\neq V^\varsigma$,
due to the minimality of $\mathbb{K}$.
\end{proof}

\begin{Remark}\label{rmk}
We can consider $\mathbb{K}=\mathbb{C}$ as the field of definition of $V$.
Then it follows equivalently to the case when $\mathbb{K}$ is not abelian to prove that
\[
\Vtors\subset V\cap V^\varsigma\subsetneq V
\mbox{,}
\]
for any $\varsigma\in\Gal(\mathbb{C}/\Qab)$ such that $\varsigma\neq \Id$.
\end{Remark}

The following theorem is a specialization of \cite[Theorem 1.2]{AV09} to torsion subvarieties.
Keeping the notation of Proposition \ref{proposition21},
note that $\varphi^{-1}(\bfeta)^2$ lies in the stabilizer of $V$ for any $\bfeta\in\mu_\infty^r$.
This is a fundamental observation so that we can use Lemma \ref{lemmaHilbert} in the proof of the theorem.

\begin{theorem}\label{t21}
Let $V\subset \Gm^n$ be an irreducible variety of dimension $d$ and codimension $k$.
We assume that $V$ is not a torsion coset.
Let
\[
\theta_0=\theta_0(V)=k(2^{2n}+2^{n+1}-2)(2d+1)\delta_0(V)
\mbox{.}
\]
Then $\Vtors$ is contained in a hypersurface $Z$ of degree at most $\theta_0$, which does not contain~$V$;
that is $\Vtors\subset V\cap Z\varsubsetneq V$.
\end{theorem}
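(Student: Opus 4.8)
The plan is to combine Proposition \ref{proposition21} with Lemma \ref{lemmaHilbert}: the proposition replaces $\Vtors$ by a finite union $V'$ of varieties all sitting inside $V$, and the lemma attaches to each component of $V'$ a hypersurface of controlled degree containing that component but not $V$; multiplying these hypersurfaces together produces the desired $Z$.

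First I would dispose of a degenerate case. If $\Vtorss=\emptyset$ — equivalently $V$ is a coset of a subtorus but, by hypothesis, not a torsion coset, so that $r:=\codim(\Stab(V))$ equals $k$ — then any hyperplane not containing $V$ already works as $Z$. So assume henceforth $\Vtorss\neq\emptyset$, whence $r>k$, and apply Proposition \ref{proposition21} (using Remark \ref{rmk} if $V$ is not defined over $\overline{\mathbb{Q}}$). In each of its three cases it yields a variety $V'$ with $\Vtors\subset V'$ and $V'\cap V\subsetneq V$, written as a finite union $\bigcup_i X_i$ whose components $X_i$ have one of the two shapes $\bfe V^\phi$ (with $\phi=\Id$ allowed) or $[2]^{-1}(\bfe V^\phi)$, for a torsion point $\bfe$ (one chosen preimage $\bfe\in\varphi^{-1}(\bfeta)$ with $\bfeta\in\mu_2^r$, by Corollary \ref{remark}(iii)) and an automorphism $\phi\in\Gal(\mathbb{K}/\mathbb{Q})$. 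In Case~1 there are $2^r-1$ components of the first shape and $2^r$ of the second; in Case~2 there are $2^r-1$ and $2^r$ components, all of the first shape; in Case~3 there is a single component $V^\varsigma$, of the first shape.

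Since $V\not\subset V'$, no component $X_i$ contains $V$; as each $X_i$ is equidimensional of dimension $d=\dim V$, this also forces $X_i\neq V$, so $X_i$ satisfies the hypothesis of the relevant part of Lemma \ref{lemmaHilbert} (for the components $[2]^{-1}(\bfe V^\phi)$ one invokes, as observed before the theorem, that $\bfe^2=\varphi^{-1}(\bfeta)^2$ lies in $\Stab(V)$). Then, by Lemma \ref{lemmaHilbert}(a), each component of the first shape carries a homogeneous $F_i$ with $F_i\equiv0$ on $X_i$, $F_i\not\equiv0$ on $V$, and $\deg F_i\leq2k(2d+1)\delta_0(V)$; by Lemma \ref{lemmaHilbert}(b), each component of the second shape carries such an $F_i$ with $\deg F_i\leq2^nk(2d+1)\delta_0(V)$. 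Put $Z=Z(\prod_i F_i)$. Then $X_i\subset Z(F_i)\subset Z$ for every $i$, so $\Vtors\subset V'\subset Z$; and since $V$ is irreducible and no $F_i$ vanishes identically on $V$, neither does their product, so $V\not\subset Z$. Hence $\Vtors\subset V\cap Z\subsetneq V$, as required.

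It remains to check $\deg Z=\sum_i\deg F_i\leq\theta_0$, and here, using $k\geq1$ and $r\leq n$: in Case~1, $\deg Z\leq\big((2^r-1)\cdot2+2^r\cdot2^n\big)\,k(2d+1)\delta_0(V)$, whose bracket is largest at $r=n$, where it equals $2^{2n}+2^{n+1}-2$, so $\deg Z\leq\theta_0$ (and this case is sharp); in Case~2, $\deg Z\leq2(2^{r+1}-1)\,k(2d+1)\delta_0(V)\leq2(2^{n+1}-1)\,k(2d+1)\delta_0(V)\leq\theta_0$, using $2^{n+1}\leq2^{2n}$ for $n\geq1$; in Case~3, $\deg Z\leq2k(2d+1)\delta_0(V)\leq\theta_0$. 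Essentially all the work is packaged into the two ingredients already proved — Proposition \ref{proposition21} (the algebraic Beukers--Smyth descent) and Lemma \ref{lemmaHilbert} (the Hilbert-function interpolation) — so the points that require care are the mundane ones: checking that \emph{each individual} component of $V'$ avoids containing $V$, which is precisely the inclusion $V'\cap V\subsetneq V$; keeping track of the constants so that the worst case ($r=n$ in Case~1) comes out exactly equal to $\theta_0$; and separating off the trivial case $\Vtorss=\emptyset$, where Proposition \ref{proposition21} does not directly apply.
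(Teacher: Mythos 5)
Your proposal is correct and takes essentially the same route as the paper: Proposition \ref{proposition21} to produce the union $V'$ with $\Vtors\subset V'$ and $V\cap V'\subsetneq V$, Lemma \ref{lemmaHilbert}(a)/(b) applied componentwise, and the product of the resulting forms as $Z$, with the same worst-case count $(2^n-1)\cdot 2+2^n\cdot 2^n=2^{2n}+2^{n+1}-2$ giving $\theta_0$. The only blemish is the parenthetical in your preliminary case split: $\Vtorss=\emptyset$ is \emph{not} equivalent to $V$ being a (non-torsion) coset, since a variety may simply contain no torsion points; this aside is not load-bearing, however, because a hyperplane missing $V$ settles $\Vtorss=\emptyset$ in any event and the implication you actually use ($\Vtorss\neq\emptyset$ together with $V$ not a torsion coset forces $r=\codim(\Stab(V))>k$) is valid, the paper itself making no such case distinction.
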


\begin{proof}
Let $V$ be minimally defined over $\mathbb{K}$.
To prove this theorem, we distinguish three cases, according to Proposition \ref{proposition21}.

\begin{itemize}
\item[1.]
If $\mathbb{K}$ is abelian and $\mathbb{Q}(\zeta_N)$ is a cyclotomic extension of $\mathbb{K}$,
with $4\nmid N$,
then by point 1 in Proposition \ref{proposition21} we have that
\[
\Vtors\subset V'=
\displaystyle\bigcup_{\bfeta\in\mu_2^r\setminus\lbrace \mathbf{1}\rbrace} (\varphi^{-1}(\bfeta) V)
\cup
\displaystyle\bigcup_{\bfeta\in\mu_2^r} [2]^{-1}(\varphi^{-1}(\bfeta) V^\sigma)
\mbox{,}
\]
where $\sigma\in\Gal(\mathbb{Q}(\zeta_N)/\mathbb{Q})$, mapping $\zeta_N\mapsto\zeta_N^2$
and $V\cap V'\subsetneq V$.

To prove the theorem we find, for each component of $V'$, a hypersurface containing it, but not containing $V$.
To conclude, it is enough to take $Z$ as the union of these hypersurfaces
and the only thing left to check is the degree of $Z$.

For every $\bfeta\in\mu_2^r\setminus\lbrace\mathbf{1}\rbrace$,
we choose an $\bfe\in\varphi^{-1}(\bfeta)$ and $\phi=\Id$.
We use Lemma \ref{lemmaHilbert}(a)
and we obtain a homogeneous polynomial
$F_{\bfeta}$ of degree at most $2k(2d+1)\delta_0(V)$ such that
$F_{\bfeta}\equiv 0$ on $\varphi^{-1}(\bfeta) V$ and
$F_{\bfeta}\not\equiv 0$ on $V$.

On the other hand,
for every $\bfeta\in\mu_2^r$,
we choose a $\bfe\in\varphi^{-1}(\bfeta)$ and $\phi=\sigma$.
We use  Lemma \ref{lemmaHilbert}(b)
and we obtain a homogeneous polynomial
$G_{\bfeta}$ of degree at most $2^nk(d+1)\delta_0(V)$ such that
$G_{\bfeta}\equiv 0$ on $[2]^{-1}(\varphi^{-1}(\bfeta) V)$ and
$G_{\bfeta}\not\equiv 0$ on $V$.

For $Z\subset\Gm^n$ the hypersuface defined by
\[
\displaystyle\prod_{\bfeta\in\mu_2^r\setminus\lbrace\mathbf{1}\rbrace} F_{\bfeta} (\bfx)
\cdot
\displaystyle\prod_{\bfeta\in\mu_2^r} G_{\bfeta}(\bfx)=0
\mbox{,}
\]
we have
\[
\deg(Z)\leq
\displaystyle\sum_{\bfeta\in\mu_2^r\setminus\lbrace\mathbf{1}\rbrace} 2k(2d+1)\delta_0(V)
+
\displaystyle\sum_{\bfeta\in\mu_2^r} 2^nk(2d+1)\delta_0(V)
\leq \theta_0
\]
and $\Vtors\subset V\cap V'\subset V\cap Z\subsetneq V$.

\vspace*{2mm}

\item[2.]
If $\mathbb{K}$ is abelian and $\mathbb{Q}(\zeta_N)$ is its minimal cyclotomic extension,
with $N=4N'$,
then by point 2 in Proposition \ref{proposition21} we have
\[
\Vtors\subset V'=
\displaystyle\bigcup_{\bfeta\in\mu_2^r\setminus\lbrace \mathbf{1}\rbrace} (\varphi^{-1}(\bfeta) V)
\cup
\displaystyle\bigcup_{\bfeta\in\mu_2^r} (\varphi^{-1}(\bfeta) V^\tau)
\mbox{,}
\]
where $\tau\in\Gal(\mathbb{Q}(\zeta_N)/\mathbb{Q})$, mapping $\zeta_N\mapsto\zeta_N^{1+2N'}$ and
$V\cap V'\subsetneq V$.
We proceed as before.

For every $\bfeta\in\mu_2^r\setminus\lbrace\mathbf{1}\rbrace$,
we choose an $\bfe\in\varphi^{-1}(\bfeta)$ and $\phi=\Id$.
We use Lemma~\ref{lemmaHilbert}(a)
and we obtain a homogeneous polynomial
$F_{\bfeta}$ of degree at most $2k(2d+1)\delta_0(V)$ such that
$F_{\bfeta}\equiv 0$ on $\varphi^{-1}(\bfeta) V$ and
$F_{\bfeta}\not\equiv 0$ on $V$.

On the other hand,
for every $\bfeta\in\mu_2^r$,
we choose a $\bfe\in\varphi^{-1}(\bfeta)$ and $\phi=\tau$.
We use again Lemma \ref{lemmaHilbert}(a)
and we obtain a homogeneous polynomial
$F'_{\bfeta}$ of degree at most $2k(2d+1)\delta_0(V)$ such that
$F'_{\bfeta}\equiv 0$ on $\varphi^{-1}(\bfeta) V^\tau$ and
$F'_{\bfeta}\not\equiv 0$ on $V$.

For $Z\subset\Gm^n$ the hypersurace defined by
\[
\displaystyle\prod_{\bfeta\in\mu_2^r\setminus\lbrace\mathbf{1}\rbrace} F_{\bfeta}(\bfx)
\cdot
\displaystyle\prod_{\bfeta\in\mu_2^r} F'_{\bfeta}(\bfx)
=0
\mbox{,}
\]
we have
\[
\deg(Z)\leq
\displaystyle\sum_{\bfeta\in\mu_2^r\setminus\lbrace\mathbf{1}\rbrace} 2k(2d+1)\delta_0(V)
+
\displaystyle\sum_{\bfeta\in\mu_2^r} 2k(2d+1)\delta_0(V)
\leq \theta_0
\]
and $\Vtors\subset V\cap V'\subset V\cap Z\subsetneq V$.

\vspace*{2mm}

\item[3.]
If $\mathbb{K}$ is not abelian,
by point 3 in Proposition \ref{proposition21},
we have that $\Vtors\subset V\cap V^{\varsigma}$
for any $\varsigma\in\Gal(\mathbb{K}/\Qab\cap\mathbb{K})$ such that $\varsigma\neq \Id$.

We choose $\bfe=\mathbf{1}$ and $\phi=\varsigma$.
we use Lemma \ref{lemmaHilbert}(a)
and we obtain a homogeneous polynomial $F$ of degree at most $2k(2d+1)\delta_0(V)$ such that
$F\equiv 0$ on $V^{\varsigma}$ and
$F\not\equiv 0$ on $V$.
Therefore, if we take the hypersurface $Z$ defined by $F(\bfx)=0$, we have $\deg(Z)\leq\theta_0$ and
$\Vtors\subset V\cap V^{\varsigma}\subset V\cap Z\subsetneq V$.
\end{itemize}

If $V$ is not defined over an extension of $\mathbb{Q}$, we use Remark \ref{rmk} and the proof follows as point $3$.
\end{proof}

\section{Induction theorems}

The following theorems correspond to Theorem 2.2 and Theorem 1.2 in \cite{AV09}.
Their proofs follow similarly to the ones of Amoroso and Viada.
For the convenience of the reader, we reproduce the proofs.

\begin{theorem}\label{t22}
Let $V_0\subset V_1$ be subvarieties of $\Gm^n$ of codimension $k_0$ and $k_1$ respectively and $V_0$ irreducible.
Let
\[
\theta=\theta(V_1)=((2n-1)k_0(2^{2n}+2^{n+1}-2))^{k_0-k_1+1}\delta(V_1)
\mbox{.}
\]
Then one of the following is true:
\begin{itemize}
\item[(a)]
there exists a torsion coset $B$ such that $V_0\subseteq B\subseteq V_1$;
\item[(b)]
there exists a hypersurface $Z$ of degree at most $\theta$ such that $V_0\nsubseteq Z$ and $\overline{V_{0,\textrm{tors}}}\subseteq Z$. 
\end{itemize}
\end{theorem}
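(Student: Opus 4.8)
The plan is to argue by induction on the relative codimension $k_0 - k_1 \ge 0$, using Theorem~\ref{t21} as the driving tool: it already produces, for any irreducible variety that is not a torsion coset, a hypersurface through its torsion subvariety but not through the variety itself, of degree bounded in terms of $\delta_0$; the induction will transport this conclusion from $V_0$ up to $V_1$ by intersecting with one such hypersurface at a time.

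For the base case $k_0 = k_1$ one has $\dim V_0 = \dim V_1$, so the irreducible variety $V_0$ is a component of $V_1$; consequently $\delta_0(V_0) \le \delta(V_1)$, since $V_1$ is by definition an intersection of hypersurfaces of degree at most $\delta(V_1)$ admitting $V_0$ among its components. If $V_0$ is a torsion coset, case (a) holds with $B = V_0$ (this also covers $k_0 = 0$). Otherwise Theorem~\ref{t21} applied to $V_0$ gives a hypersurface $Z$ with $\overline{V_{0,\textrm{tors}}} \subseteq V_0 \cap Z \subsetneq V_0$ and $\deg Z \le \theta_0(V_0) = k_0(2^{2n}+2^{n+1}-2)(2\dim V_0+1)\delta_0(V_0)$. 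Since $\dim V_0 = n - k_0 \le n-1$ yields $2\dim V_0 + 1 \le 2n-1$, and $\delta_0(V_0) \le \delta(V_1)$, this is at most $\big((2n-1)k_0(2^{2n}+2^{n+1}-2)\big)\delta(V_1) = \theta(V_1)$ (the exponent $k_0 - k_1 + 1$ being $1$ here), so (b) holds.

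For the inductive step $k_0 > k_1$ I would pick an irreducible component $W$ of $V_1$ with $V_0 \subseteq W$, and set $k = \codim W$, so $k_1 \le k \le k_0$. If $W$ is a torsion coset, then $V_0 \subseteq W \subseteq V_1$ and (a) holds; if $k = k_0$ then $W = V_0$ and the base case applies (its hypersurface has degree $\le C\,\delta(V_1) \le \theta(V_1)$, where $C := (2n-1)k_0(2^{2n}+2^{n+1}-2) \ge 1$). So assume $W$ is not a torsion coset and $k < k_0$, and apply Theorem~\ref{t21} to $W$: it yields a hypersurface $Z_1$ with $\overline{W_{\textrm{tors}}} \subseteq W \cap Z_1 \subsetneq W$ and, as in the base case (using $\delta_0(W) \le \delta(V_1)$, $\codim W \le k_0$ and $2\dim W + 1 \le 2n-1$), $\deg Z_1 \le C\,\delta(V_1)$. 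Now there are two cases. If $V_0 \not\subseteq Z_1$, then $Z := Z_1$ works: it contains $\overline{V_{0,\textrm{tors}}} \subseteq \overline{W_{\textrm{tors}}}$, omits $V_0$, and has degree $\le C\,\delta(V_1) \le \theta(V_1)$, so (b). If $V_0 \subseteq Z_1$, then $V_0 \subseteq W \cap Z_1$, which, $W$ being irreducible and not contained in $Z_1$, is equidimensional of codimension $k+1$; as $k_0 - (k+1) < k_0 - k_1$, I would apply the inductive hypothesis to $V_0 \subseteq W \cap Z_1$, obtaining either a torsion coset $B$ with $V_0 \subseteq B \subseteq W \cap Z_1 \subseteq V_1$, or a hypersurface $Z$ with $\overline{V_{0,\textrm{tors}}} \subseteq Z$, $V_0 \not\subseteq Z$ and $\deg Z \le \theta(W \cap Z_1) = C^{\,k_0-k}\delta(W \cap Z_1)$.

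The step I expect to be the main obstacle is closing the degree estimate through the recursion. To finish one needs $\theta(W \cap Z_1) \le \theta(V_1)$; as $k \ge k_1$ gives $C^{k_0-k} \le C^{k_0-k_1}$, it suffices that $\delta(W \cap Z_1) \le C\,\delta(V_1)$. Writing $W \cap Z_1$ through a complete set of equations for $W$ together with the one defining $Z_1$ gives $\delta(W \cap Z_1) \le \max(\delta(W), \deg Z_1)$ with $\deg Z_1 \le C\,\delta(V_1)$, so everything hinges on keeping the \emph{full} degree of definition $\delta(W)$ of the component $W$ under control along the recursion --- whereas only $\delta_0(W) \le \delta(V_1)$ is available for free, and that is all Theorem~\ref{t21} needs. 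I would resolve this by carrying the induction with the degree of \emph{incomplete} definition $\delta_0$ in place of $\delta$ (the stated inequality then following from $\delta_0(V_1) \le \delta(V_1)$), using that $\delta_0$ does not increase when one passes to a component of an equidimensional variety, that it controls the effect of intersecting with a hypersurface in the spirit of Lemma~\ref{lemma36}, and that along the recursion the ambient variety stays equidimensional with its codimension strictly increasing; thus each of the at most $k_0 - k_1 + 1$ applications of Theorem~\ref{t21} multiplies the controlling degree by at most $C = (2n-1)k_0(2^{2n}+2^{n+1}-2)$, producing the final bound $C^{\,k_0-k_1+1}\delta(V_1) = \theta(V_1)$.
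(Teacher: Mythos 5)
Your reduction to Theorem \ref{t21} and your case split (``$V_0\not\subseteq Z_1$: done; otherwise recurse'') capture the right mechanism, and you have correctly located the crux in the degree bookkeeping. But the resolution you propose does not close the gap: running the induction with $\delta_0$ requires a bound of the shape $\delta_0(W\cap Z_1)\leq\max(\delta_0(W),\deg Z_1)$ (or at least some bound in terms of $\delta_0(W)$ and $\deg Z_1$ alone), and no such statement is available --- the paper proves nothing of the sort, and it is in fact false in general. The obstruction is that a witness $X$ for $\delta_0(W)$ may have extra components which, after intersecting with $Z_1$, swallow components of $W\cap Z_1$; more concretely, for a finite set of points one has $\delta_0\geq$ the maximal number of collinear points (any form of degree $\nu$ vanishing at more than $\nu$ points of a line vanishes on the line, so those points cannot be isolated in the witness), while $W\cap Z_1$ can contain long collinear strings even when $\delta_0(W)$ and $\deg Z_1$ are small. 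For instance, an irreducible curve $C$ on a smooth cubic surface $S\subset\mathbb{P}^3$, residual to a line $L\subset S$ in the intersection of $S$ with a quartic, has $\delta_0(C)\leq 4$ but meets $L$ in $5$ points; cutting with a plane through $L$ produces $5$ collinear points in $C\cap Z_1$, so $\delta_0(C\cap Z_1)\geq 5>\max(\delta_0(C),\deg Z_1)$. Note also that the paper explicitly warns (after Lemma \ref{lemma36}) that $\delta_0$ does not share the good formal behaviour of $\delta$; its only stable feature, and the only one ever used, is that $\delta_0$ of an irreducible \emph{component} is at most $\delta$ of the ambient variety.

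The paper's proof is organized precisely so that this is the only fact needed. Instead of recursing into the abstract equidimensional variety $W\cap Z_1$ (whose defining degrees are no longer explicit), it argues by contradiction, assuming (a') no torsion coset $B$ with $V_0\subseteq B\subseteq V_1$ and (b') every hypersurface containing $\overline{V_{0,\textrm{tors}}}$ contains $V_0$, and builds a chain $X_0=V_1\supseteq X_1\supseteq\cdots$ in which each $X_{r+1}$ is obtained from $X_r$ by adjoining the equations of the hypersurfaces $Z_j$ produced by Theorem \ref{t21} applied to \emph{all} irreducible components $W_j$ of $X_r$ containing $V_0$ (hypothesis (b') forces $V_0\subseteq Z_j$ for each $j$, which plays the role of your second case but for every component at once). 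Because $X_{r+1}$ is by construction an intersection of explicit hypersurfaces, $\delta(X_{r+1})\leq\max\{\delta(X_r),\deg Z_j\}$ trivially, and $\delta_0(W_j)\leq\delta(X_r)$ for its components, so the degrees propagate as $D_{r+1}=(2n-1)k_0(2^{2n}+2^{n+1}-2)D_r$; meanwhile the codimension of every component containing $V_0$ strictly increases at each step, giving a contradiction after $k_0-k_1+1$ steps. If you try to repair your argument by carrying along an explicit witness of controlled degree rather than $W\cap Z_1$ itself, you are led essentially to this chain construction, so the fix is not a minor patch of your bookkeeping but a reorganization of the induction.
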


\begin{proof}
We assume the statement to be false, that is:
\begin{itemize}
\item[(a')]
$V_0$ is not contained in any torsion coset $B\subset V_1$;
\end{itemize}
and
\begin{itemize}
\item[(b')]
for every hypersurface $Z$ satisfying $\overline{V_{0,\textrm{tors}}}\subset Z$, we also have that$V_0\subset Z$.
\end{itemize}

We define, for $r=0,\ldots,k_0-k_1+1$,
\[
D_r=((2n-1)k_0(2^{2n}+2^{n+1}-2))^{r}\delta(V_1)
\]
and we build a chain of varieties
\[
X_0=V_1\supseteq \cdots \supseteq X_{k_0-k_1+1}
\]
such that, for every $r=0,\ldots,k_0-k_1+1$, the following hold:
\begin{itemize}
\item[(i)]
$V_0\subseteq X_r$,
\item[(ii)]
each irreducible component of $X_r$ containing $V_0$ has codimension at least $r+k_1$,
\item[(iii)]
$\delta(X_r)\leq D_r$.
\end{itemize}

If this holds for $r=k_0-k_1+1$, we have a component of $X_r$ of codimension at least $k_0+1$ containing $V_0$
which is a contradiction.

We build the chain by recursion:
\begin{itemize}
\item
For $r=0$, $X_0=V_1$ satisfies the properties.
\item
For $r+1>1$, we assume we have already constructed $X_r$.
Let $W_1,\ldots,W_t$ be the irreducible components of $X_r$ such that
\[
V_0\subset W_j\;\Leftrightarrow\; 1\leq j\leq s\mbox{.}
\] 
Property (i) guarantees that $s>0$ and, together with property (ii), we have that $r+k_1\leq\codim(W_j)\leq k_0$ for $1\leq j\leq s$.

For every $j=1,\ldots,s$ 
we have $\delta_0(W_j)\leq \delta(X_r)\leq D_r\leq\theta$ and $V_0\subseteq W_j\subset V_1$,
with $\codim(W_j)=k$.
Hence, by hypothesis (a'), $W_j$ is not a torsion coset and we can apply Theorem \ref{t22} to $W_j$.
Let $Z_j$ be the hypersurface of degree at most
$(2d+1)k(2^{2k}+2^{k+1}-2)\delta_0(W_j)\leq (2n-1)k_0(2^{2k_0}+2^{k_0+1}-2)D_r=D_{r+1}$
such that $\overline{W_{j,\textrm{tors}}}\subseteq W_j\cap Z_j\varsubsetneq W_j$.
Since $V_0\subseteq W_j$,
$\overline{V_{0,\textrm{tors}}}\subseteq \overline{W_{j,\textrm{tors}}}\subset Z_j$
and $\deg (Z_j)\leq D_{r+1}\leq \theta$,
hypothesis (b') guarantees that $V_0\subset Z_j$.

So
\[
V_0\subset\displaystyle\bigcap_{j=1}^{s}Z_j
\mbox{,}
\]
and we define
\[
X_{r+1}=X\cap\displaystyle\bigcap_{j=1}^{s}Z_j
\mbox{.}
\]
In particular, $V_0\subseteq X_{r+1}$ and property (i) is satisfied.
Moreover, property (iii) is also satisfied, because
\[
\delta(X_{r+1})
=\max \lbrace \delta(X_r),\deg Z_1,\ldots,\deg Z_s\rbrace
\leq\max \lbrace D_r, D_{r+1}\rbrace
=D_{r+1}
\mbox{.}
\]
By taking $W'_j=W_j\cap Z_1\cap \cdots \cap Z_s$ for every $j=1,\ldots,t$ we have
\[
X_{r+1}=W'_1\cup \cdots\cup W'_s\cup W'_{s+1}\cup W'_t
\mbox{.}
\]

For $j=1,\ldots,s$, we have that every irreducible component of $W'_j$ has codimension at least
$\codim(W_j)+1\geq r+k_1+1$.
And for $j=s+1,\ldots,t$ we have that $V_0\not\subset W_j$, thus $V_0$ is not contained in any irreducible component of $W'_j$.
This shows that property (ii) is satisfied.
\end{itemize}
\end{proof}

\begin{theorem13}\label{t12}
Let $V\subset\Gm^n$ be a variety of dimension  $d$.
For $j=0,\ldots,d$\;, let $\Vtorss^j$ be the $j$-equidimensional part of $\Vtors$.
Then
\[
\deg(\Vtorss^j)\leq c_{n,j}\delta(V)^{n-j}
\]
for every $i=0,\ldots,d$\;, where
\[
c_{n,j}=((2n-1)(n-1)(2^{2n}+2^{n+1}-2))^{d(n-j)}
\mbox{.}
\]
\end{theorem13}

\begin{proof}
If $\dim(V)=0$, $\card(\Vtors)\leq\deg(V)\leq \delta(V)^n$ and we are done. Hence, we suppose $\dim(V)>0$.

Let
\[
\theta=\theta(V)=((2n-1)(n-1)(2^{2n}+2^{n+1}-2))^{d}\delta(V)
\mbox{.}
\]
Observe that $c_{n,j}\delta(V)^{n-j}=\theta^{n-j}$.

Let $V=X^d\cup\cdots\cup X^0$, where $X^j$ represents the $j$-equidimensional part of $V$ for every~$j$.
We have $\Vtors=\Vtorss^d\cup\cdots\cup\Vtorss^0$ and we build the family $\Vtorss^d,\ldots,\Vtorss^0$ recursively as follows:

\textsc{Claim.}
For every $r=d,\ldots,0$
there exist an $r$-equidimensional varietie $Y^r$, such that
\begin{itemize}
\item[(i)]
$\Vtors\subseteq \Vtorss^d\cup\cdots\cup\Vtorss^{r+1}\cup Y^r \cup X^{r-1}\cup \cdots\cup X^{0}$;
\item[(ii)]
$\sum_{i=d}^{r+1}\theta^{i-r}\deg(\Vtorss^i)+\deg(Y^r)
\leq
\sum_{i=d}^{r}\theta^{i-r}\deg(X^i)$.
\end{itemize}

If the claim holds for $r=0$, by assertion (i) we have $\Vtorss^0\subset Y^0$.
Moreover, assertion (i) also guarantees that $\Vtorss^r\subset Y^r$ which,
using assertion (ii), implies
\[
\displaystyle\sum_{i=d}^{r}\theta^{i-r}\deg(\Vtorss^i)
\leq
\displaystyle\sum_{i=d}^{r}\theta^{i-r}\deg(X^i)
\]
A result of Philippon~\cite[Corollaire~5]{Phi95} (with $m=n$ and $S=\mathbb{P}^n$) shows that,
for~$\theta \geq \delta(V)$, we have
\[
\displaystyle\sum_{i=r}^{d}\theta^{i-r}\deg(X^i)\leq \theta^{n-r}
\mbox{.}
\]
Hence, setting $r=0$, we obtain
\[
\displaystyle\sum_{i=0}^d\theta^{i}\deg (\Vtorss^i)\leq\theta^n
\]
and the inequalities of the statement follow trivially.

It remains to prove the claim. We build the family as follows.
\begin{itemize}
\item
For $r=d$, we take $Y^d=X^d$ and the claim holds.
\item
Let $d\geq r> 0$. We assume that we have $Y^r$ satisfying the claim.
If $Y^r$ has a component which is imbedded in $\Vtorss^d\cup\cdots\cup\Vtorss^{r+1}$ or a component that does not intersect $\Vtors$,
we descart it (this won't have any effect on the veracity of our claim).
Next, let 
\[
Y^r=\Vtorss^r\cup W_1\cup\cdots\cup W_s
\]
be the decomposition of $Y^r$ such that
$\Vtorss^r$ is the union of all torsion cosets $B$ of dimension $r$ which are components of $Y^r$, and
$W_1,\ldots,W_s$ are the rest of irreducible components of $Y^r$.

For every $j=1,\ldots,s$\;, $W_j$ satisfies the following remark.
\begin{remark}
There does not exist any torsion coset $B$ such that $W_j\subseteq B\subseteq V$.
\end{remark}
\begin{proof}
If a torsion coset $B$ as such exists, $B\subset \Vtors$ and $\dim(B)\geq r$.
Therefore
\[
W_j\subseteq B\subseteq \Vtorss^d\cup\cdots\cup \Vtorss^r
\mbox{,}
\]
which contradicts the definition of $\Vtorss^r$ or the fact that no component of $Y^r$ is imbedded in $\Vtorss^d\cup\cdots\cup\Vtorss^{r+1}$.
\end{proof}

We apply Theorem \ref{t22} to $V_0=W_j$ and $V_1=V$, where $k_0=n-r\leq n-1$ and $k_1=n-d$.
Conclusion (a) of the theorem cannot be true, due to the previous remark;
hence there exists a hypersurface $Z_j$ of degree at most $\theta$ such that
$\overline{W_{j,\textrm{tors}}}\subset W_j\cap Z_j\varsubsetneq W_j$.
Krull's Hauptschatz implies that $W_j\cap Z_j$ is either empty or an $(r-1)$-equidimensional variety.
This allows us to define $Y^{r-1}$ as follows:
\[
Y^{r-1}=X^{r-1}\cup\displaystyle\bigcup_{j=1}^{s}(W_j\cap Z_j)
\mbox{.}
\]
By the construction of $Y^{r-1}$, assertion (i) of our claim is satisfied for $r-1$.

Moreover, by Bézout's theorem, the following inequality holds
\[
\deg(Y^{r-1})\leq 
\theta
\displaystyle\sum_{j=1}^s\deg(W_j)
+\deg(X^{r-1})
\mbox{.}
\]
Since $Y^r=\Vtorss^r\cup W_1\cup\cdots\cup W_s$, we have
\[
\deg(Y^{r-1})\leq
\theta\big( \deg(Y^r)-\deg(\Vtorss^r)\big)
+\deg(X^{r-1})
\mbox{.}
\]
Additioning $\sum_{i=r}^{d}\theta^{i+1-r}\deg(\Vtorss^i)$ to both sides of the inequality, we obtain
\[
\begin{array}{rcl}
\displaystyle\sum_{i=r}^{d}\theta^{i+1-r}\deg(\Vtorss^i)+\deg(Y^{r-1})
&\leq&
\displaystyle\sum_{i=r}^{d}\theta^{i+1-r}\deg(\Vtorss^i)
\\ \\
& &\;+\theta\big(\deg(Y^r)-\deg(\Vtorss^r)\big)+\deg(X^{r-1})
\\ \\
&=&
\theta\left(
\displaystyle\sum_{i=r+1}^{d}\theta^{i-r}\deg(\Vtorss^i)+\deg(Y^r)
\right)
\\ \\
& &\;+\deg(X^{r-1})
\mbox{.}
\end{array}
\]
By the induction,
we have $\sum_{i=r+1}^{d}\theta^{i-r}\deg(\Vtorss^i)+\deg(Y^r)\leq \sum_{i=r}^{d}\theta^{i-r}\deg(X^i)$.
Therefore
\[
\theta\left(
\displaystyle\sum_{i=r}^{d}\theta^{i-r}\deg(\Vtorss^i)+\deg(Y^r)
\right)
+\deg(X^{r-1})
\leq
\displaystyle\sum_{i=r-1}^{d}\theta^{i+1-r}\deg(X^i)
\mbox{,}
\]
proving that assertion (ii) of our claim holds for $r-1$.
\end{itemize}
\end{proof}

If $V$ is a hypersurface we can replace $\delta$ by $\deg(V)$.
Observe that this result is close to the conjectures.

\begin{remark}
Following the theorems presented by Amoroso and Viada \cite{AV09} we could obtain that $\delta_0(H)\leq \theta$,
for each maximal torsion coset $\bfomega H$ in $V$, $\delta_0(H)\leq \theta$.
However, we have the following sharper bound:
\[
\delta(H)\leq n\delta(V)
\mbox{,}
\]
which is a result of Bombieri and Gubler \cite[Theorem 3.3.8]{BG06}.
\end{remark}

\begin{remark}
In Theorem \ref{teo} we could give a more precise bound, depending on the field of definition of our variety $V$. 
To understand this, first observe that the varieties $V'$ we obtain in Proposition \ref{proposition21} are defined over the same field as~$V$.
Hence, in Theorem \ref{t21} we could consider changing the definition of $\theta_0$, depending on the field of definition of~$V$.

In the case that $V$ is not defined over $\Qab$, Theorem \ref{t21} remains true for
\[
\theta_0=2k(2d+1)\delta_0(V)
\mbox{.}
\]
Using this definition of $\theta_0$ in the induction theorems, we can improve the bound in Theorem \ref{teo} for this case.
Hence, if $V$ is not defined over $\Qab$, the number of maximal torsion cosets in $V$ is bounded by
\[
(2(2n-1)(n-1))^{n(n-k)}\delta(V)^n
\mbox{.}
\]

In the case that $V$ is defined over $\Qab$, this sharpening of the $\theta_0$ does not change significantly our bound since the order of $n$ in the constant would remain essentially the same.
\end{remark}

\section{Proof of the conjectures}
In this section we prove Theorem \ref{TEO}.
Observe that for any hypersurface $V$ given by the zeros of a polynomial $f$ of degree $\delta$,
Theorem \ref{teo} implies that the number of isolated torsion points on $V$ is bounded by $c_n \delta^n$.
A similar result with $\vol_n(\Delta)$ instead of $\delta^n$, where $\supp(f)\subset \Delta$, would imply the conjectures.
The idea to obtain such a result lies in considering another hypersurface $W$ with a degree depending only on $n$ and such that $\card(\Vtorss^0)\leq \vol_n(\Delta)\card (\Wtorss^0)$.
We are not able to give such a variety; instead, for positive integers $l\in\mathbb{Z}$, we build a hypersurface $W$ of degree $2nl+\kappa_{n,\Delta}$ and such that $\card(\Vtorss^0)\leq \tilde{\kappa}_{n} l^{-n}\vol_n(\Delta)\card(\Wtorss^0)$, where $\kappa_{n,\Delta}$ depends only on $n$ and $\Delta$, and $\tilde{\kappa}_{n}$ on $n$.
Taking the limit for $l\rightarrow\infty$ we obtain the statement we expect.  

First, we state a result of John \cite[Theorem III]{Joh48} which allows us to compare the volume of any convex polytope $\Delta$ with the volume of the ellipsoid of smallest volume containing $\Delta$.
\begin{theorem}\label{John}
If $E$ is the ellipsoid of smallest volume containing a set $S$ in $\mathbb{R}^n$, then the ellipsoid $\frac{1}{n}E$ is contained in the convex hull of $S$, $\conv (S)$.
\end{theorem}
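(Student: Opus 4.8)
The plan is to reduce the assertion to the classical lower bound in John's theorem and then to prove that lower bound directly by a single--slab argument. First I would observe that the statement is invariant under any invertible affine map $T$ of $\mathbb{R}^n$: such a $T$ carries the smallest-volume ellipsoid containing $S$ to the smallest-volume ellipsoid containing $T(S)$ (it rescales all volumes by the fixed factor $|\det T|$ and preserves the notions ``ellipsoid'' and ``$\supseteq$''), it carries the concentric contraction $\tfrac1n E$ to $\tfrac1n T(E)$, and $T(\conv S)=\conv T(S)$. Moreover, an ellipsoid is convex, so it contains $S$ if and only if it contains $K:=\conv(S)$; hence $E$ is equally the smallest-volume ellipsoid containing $K$, and it suffices to prove $\tfrac1n E\subseteq K$. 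Choosing $T$ appropriately I may therefore assume that $K$ is a convex body and that $E=B$ is the closed Euclidean unit ball centred at the origin (the case $n=1$ being trivial, assume $n\ge 2$). The existence of a smallest-volume enclosing ellipsoid, which is needed for the statement to make sense, follows from a routine compactness argument on the space of positive-definite quadratic forms; uniqueness will not be used.

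Next I would argue by contradiction, assuming $\tfrac1n B\not\subseteq K$. Since $K$ is closed, one can then pick $p\notin K$ with $|p|<1/n$, and the separating-hyperplane theorem provides a unit vector $v$ and a scalar $t$ with $\langle x,v\rangle\le t$ for all $x\in K$ and $\langle p,v\rangle>t$; thus $t<|p|<1/n$, while the inclusion $K\subseteq B$ together with the non-emptiness of $\operatorname{int}(K)$ forces $t>-1$. After the rotation sending $v$ to $e_n$ we obtain $K\subseteq C_t:=B\cap\{x_n\le t\}$, a proper spherical cap. The aim is now to produce an ellipsoid of volume strictly smaller than $\vol(B)$ that still contains $C_t$: this contradicts the minimality of $B$ and hence yields $\tfrac1n B\subseteq K$.

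For the construction I would use the rotational symmetry of $C_t$ about the $x_n$-axis and search for an enclosing spheroid
\[
E_{a,b,c}=\Big\{x:\tfrac{(x_n-c)^2}{a^2}+\tfrac{x_1^2+\cdots+x_{n-1}^2}{b^2}\le 1\Big\},
\]
whose volume relative to $B$ equals $a\,b^{\,n-1}$. As the cross-section of $C_t$ at height $s\in[-1,t]$ is a disc of radius $\sqrt{1-s^2}$ centred on the axis, the inclusion $C_t\subseteq E_{a,b,c}$ is equivalent to the single inequality $\tfrac{(s-c)^2}{a^2}+\tfrac{1-s^2}{b^2}\le 1$ holding for all $s\in[-1,t]$. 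Taking $a=1+c$ makes this hold with equality at $s=-1$ (the pole $(0,\dots,0,-1)$ landing on $\partial E_{a,b,c}$), and imposing $b\ge a$ makes the left-hand side convex in $s$, so it then suffices to enforce the inequality at the other endpoint $s=t$, which pins $b$ down as an explicit function of $c$ and $t$. One is left to minimise $a\,b^{\,n-1}$ over the admissible offsets $c\in\big(\tfrac{t-1}{2},0\big)$; a short computation gives the minimiser $c^{\ast}=\tfrac{nt-1}{n+1}$, and substituting it back one finds that the resulting volume ratio equals $1$ exactly when $t=1/n$ and, regarded as a function of $t$, has logarithmic derivative $\tfrac{1-nt}{1-t^2}$, which is positive on $(-1,1/n)$. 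Consequently, for $t<1/n$ the spheroid $E_{a,b,c^{\ast}}$ contains $C_t\supseteq K$ and has volume strictly below $\vol(B)$, which is the contradiction sought.

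The step I expect to be the real crux is the optimisation of the offset $c$: everything preceding it goes through for an arbitrary enclosing slab and would yield only a weaker constant, and it is precisely the choice $c=c^{\ast}$ that makes the sharp threshold $t=1/n$ — the value for which equality holds, realised by the regular simplex inscribed in $B$ — drop out of the algebra. The remaining points are bookkeeping: verifying that $b\ge a$ indeed holds at $c^{\ast}$, that $C_t$ (and not merely its boundary) is contained in $E_{a,b,c^{\ast}}$, and that the volume ratio stays below $1$ throughout $t<1/n$, which the monotonicity just noted settles.
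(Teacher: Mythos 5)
Your argument is correct, and it is worth noting that the paper gives no proof of this statement at all: it is quoted verbatim from John's article and used as a black box via the citation \cite[Theorem III]{Joh48}. So the comparison is between your self-contained proof and an external reference. What you write is the classical single-cap variational argument for the minimal (L\"owner) ellipsoid, and the computations you defer as bookkeeping do check out: with $a=1+c$ and $b^2=\frac{(1-t)(1+c)^2}{1+2c-t}$ the ratio $ab^{n-1}$ is minimized at $c^{*}=\frac{nt-1}{n+1}$, where $1+c^{*}=\frac{n(1+t)}{n+1}$ and $1+2c^{*}-t=\frac{(n-1)(1+t)}{n+1}$, so that $b^2/a^2=\frac{(n+1)(1-t)}{(n-1)(1+t)}\ge 1$ precisely when $t\le 1/n$ (hence the convexity-in-$s$ reduction to the endpoints $s=-1$, $s=t$ is legitimate throughout the range you use it), the ratio equals $1$ at $t=1/n$, its logarithmic $t$-derivative is $\frac{1-nt}{1-t^2}>0$ on $(-1,1/n)$, and $c^{*}$ indeed lies in $(\frac{t-1}{2},0)$ for $n\ge 2$ and $-1<t<1/n$. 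The payoff of your route is an elementary, fully explicit proof that avoids John's contact-point characterization; the payoff of the paper's route is brevity, since only the inclusion $\frac1n E\subset\conv(S)$ is ever used (in Proposition \ref{pconvex}).

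One small point to flag: you invoke closedness of $K=\conv(S)$ when you perturb to get $p\notin K$ with $|p|<1/n$. For an arbitrary set $S$ the convex hull need not be closed, and the statement as literally worded can then fail (for $S$ an open simplex, $\frac1n E$ touches the facets at points outside $\conv(S)$). So your proof, like the standard formulation of John's theorem, really requires $S$ compact, or $\conv(S)$ replaced by its closure; this is harmless for the paper, which applies the theorem only to convex lattice polytopes $\Delta$, but you should state the hypothesis explicitly rather than let it enter silently through ``since $K$ is closed''.
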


An ellipsoid $E$ is determined by an invertible matrix $M\in\GL_n(\mathbb{R})$ and a vector $\bfv\in\mathbb{R}^n$ such that
\[
E=M B_n+\bfv
\mbox{,}
\]
where $B_n$ is the $n$-dimensional unit ball with center $\mathbf{0}$.
In particular, the volume of $E$ is detemined by $M$ since $\vol_n(E)=\det(M)\omega_n$,
where $\omega_n$ is the $n$-volume of $n$-sphere.

In the following theorem we consider a convex polytope $\Delta$ with integer vertices.
By means of John's result, we include a deformation of $\Delta$ in a homothety of the standard simplex $\mathcal{S}_n=\lbrace \bft\in (\mathbb{R}_{\geq 0})^n \mid\; t_1+\cdots +t_n\leq 1\rbrace$ with comparable volume.

\begin{proposition}\label{pconvex}
Let $\Delta$ be a convex polytope with integer vertexes.
For any $l\in\mathbb{N}_{>0}$, there exists a non-singular integer matrix $M_l$ and an integer vector $\bftau_l$ such that
\[
M_l\Delta+\bftau_l
\subset
2n(l+n\diam_1(\Delta)+n)\mathcal{S}_n
\mbox{,}
\]
and
\begin{equation}\label{ineq}
\lim\limits_{l\rightarrow +\infty}l^{-n}\det(M_l)\geq \omega_n n^{-n}\vol_n(\Delta)^{-1}
\mbox{.}
\end{equation}
\end{proposition}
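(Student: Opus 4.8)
The plan is to use John's theorem (Theorem \ref{John}) on the polytope $\Delta$ to produce an ellipsoid, and then to discretize that ellipsoid by an integer affine map. First I would let $E = MB_n + \bfv$ be the ellipsoid of \emph{smallest} volume containing $\Delta$, with $M \in \GL_n(\mathbb{R})$ and $\bfv \in \mathbb{R}^n$; by John's theorem $\frac{1}{n}E = \frac{1}{n}MB_n + \frac{1}{n}\bfv \subset \conv(\Delta) = \Delta$. Inverting, $M^{-1}\Delta - M^{-1}\bfv \supset \frac{1}{n}B_n$, equivalently $nM^{-1}\Delta - nM^{-1}\bfv \supset B_n$, so $A := nM^{-1}$ sends $\Delta$ (after a translation) onto a set \emph{containing} the unit ball. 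Since $\det(A) = n^n/\det(M)$ and $\vol_n(\Delta) \le \vol_n(E) = \det(M)\omega_n$, we get $\det(A) \ge n^n \omega_n / \vol_n(\Delta)$, which up to the scaling factor is the lower bound we want in (\ref{ineq}). But $A$ is a real matrix, not an integer one, and $\Delta$ is large rather than of unit size; the role of the parameter $l$ is to rescale and round.

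The next step is to approximate $A$ by an integer matrix. I would set $M_l = \lceil l A \rceil$, meaning round each entry of $lA$ up (or to the nearest integer) to get an integer matrix $M_l$; then $\|M_l - lA\|$ is bounded entrywise by $1$, and since $\det$ is continuous, $l^{-n}\det(M_l) = \det(A + O(1/l)) \to \det(A) \ge n^n\omega_n\vol_n(\Delta)^{-1}$ as $l \to \infty$, giving (\ref{ineq}). For the translation I would similarly take $\bftau_l$ to be an integer vector approximating $-lA\bfv$ (adjusted so that the image lands in the positive orthant — note $\mathcal{S}_n \subset (\mathbb{R}_{\ge 0})^n$, so one needs to translate $\Delta$ far enough into the first quadrant, which costs at most a term of size $O(l\,\diam_1(\Delta)) + O(1)$ absorbed into the stated bound). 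For the containment, any point $\bfx \in \Delta$ satisfies $A\bfx - A\bfv \in AB_n$-translate... more precisely $A(\Delta - \bfv) \subset$ some ball of radius $\rho$ where $\rho$ is controlled by $A$ and $\diam_1(\Delta)$; then $M_l\Delta + \bftau_l = lA\Delta + (\text{rounding error}) + \bftau_l$ is contained in a ball of radius $O(l) + O(l\,\diam_1(\Delta)) + O(n)$, and a ball of radius $R$ centered appropriately in the positive orthant sits inside $nR\,\mathcal{S}_n$ (since the $\ell^1$-ball of radius $nR$ contains the $\ell^2$-ball of radius $R$... actually the simplex $R'\mathcal{S}_n$ contains the $\ell^1$-ball of radius $R'$ intersected with the orthant, and the $\ell^2$-ball of radius $R$ is inside the $\ell^1$-ball of radius $\sqrt{n}R \le nR$). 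Tracking the constants carefully yields the explicit bound $2n(l + n\diam_1(\Delta) + n)\mathcal{S}_n$.

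I should double-check that $M_l$ is \emph{non-singular}: since $\det(M_l) \to \infty$ (it grows like $l^n\det(A) > 0$), for all large $l$ it is certainly nonzero, and by adjusting the rounding one can arrange it for every $l \ge 1$ (or simply restrict to large $l$, which suffices for the limit in (\ref{ineq}) and for the eventual application where one takes $l \to \infty$).

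The main obstacle I anticipate is bookkeeping rather than conceptual: one must choose the rounding of $lA$ and the translation vector $\bftau_l$ simultaneously so that (i) the image of $\Delta$ genuinely lands inside the positive orthant, (ii) the resulting $\ell^1$-radius is no more than $2n(l + n\diam_1(\Delta) + n)$, and (iii) $\det(M_l)$ has the right asymptotics — and the constant $2n$ and the shape of the error term $n\diam_1(\Delta) + n$ have to come out exactly as stated, which requires being careful about whether one bounds $\|A\|$ in operator norm or entrywise and how that interacts with John's factor of $n$ and the conversion between $\ell^2$-balls and simplices. None of these steps is deep, but the explicit constants in Theorem \ref{TEO} depend on getting them right, so this is where the real work lies.
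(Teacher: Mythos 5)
Your overall strategy is the same as the paper's (John ellipsoid for $\Delta$, invert its affine map, scale by $l$, round to an integer matrix, translate into the positive orthant, let $l\to\infty$), but two concrete steps fail as written. First, the deduction of the determinant bound has the inequality reversed: from $\vol_n(\Delta)\le\vol_n(E)=\det(M)\omega_n$ you can only conclude $\det(A)=n^n/\det(M)\le n^n\omega_n/\vol_n(\Delta)$, an \emph{upper} bound, and your claimed lower bound $\det(A)\ge n^n\omega_n/\vol_n(\Delta)$ is false in general, since it would force $\vol_n(E)\le\vol_n(\Delta)$. The inequality that actually yields (\ref{ineq}) is the nontrivial direction of John's theorem: $\frac1n E\subset\Delta$ gives $\omega_n\det(M)=\vol_n(E)\le n^n\vol_n(\Delta)$, hence $\det(M)^{-1}\ge \omega_n n^{-n}\vol_n(\Delta)^{-1}$, which is exactly the limit $\lim_{l\to+\infty}l^{-n}\det(M_l)$ when $M_l$ approximates $lM^{-1}$ (the paper's choice).

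Second, your normalization $A=nM^{-1}$ breaks the stated containment. Since all you know from $\Delta\subset E$ is $M^{-1}\Delta-M^{-1}\bfv\subset B_n$, with $M_l\approx nlM^{-1}$ the image $M_l\Delta$ only lies in a ball of radius about $nl$ (and e.g.\ for $\Delta$ a cube it genuinely has that size), so after shifting it into the positive orthant its points have coordinate sums of order $n^2l$, while every point of $2n(l+n\diam_1(\Delta)+n)\mathcal{S}_n$ has coordinate sum at most about $2nl$ for large $l$; the inclusion fails for $n\ge 2$. Relatedly, your error bookkeeping allows a radius term $O(l\,\diam_1(\Delta))$, which also cannot be absorbed: in the target $2n(l+n\diam_1(\Delta)+n)\mathcal{S}_n$ the coefficient of $l$ is an absolute constant and $\diam_1(\Delta)$ enters only with an $l$-independent coefficient. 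The paper avoids both problems by taking $M_l+M'=lM^{-1}$ and $\bfv_l+\bfv'=l\bfv$ with $\norm{M'},\norm{\bfv'}<1$, setting $\bftau_l=(l+n\diam_1(\Delta)+n)\mathbf{1}-\bfv_l$, and first translating $\Delta$ into $(\mathbb{R}_{\ge0})^n$ touching each coordinate hyperplane so that $\norm{M'}\le1$ forces $M'\Delta\subset n\diam_1(\Delta)B_n$ (an error independent of $l$); then $M_l\Delta+\bftau_l\subset(l+n\diam_1(\Delta)+n)(B_n+\mathbf{1})\subset 2n(l+n\diam_1(\Delta)+n)\mathcal{S}_n$. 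Your argument can be repaired by dropping the factor $n$ from $A$ and replacing your volume inequality with the John one, but as written neither the containment nor (\ref{ineq}) is established.
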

\begin{proof}
Translating $\Delta$ by an integer vector,
we can always assume that $\Delta$ lies in $(\mathbb{R}_{\geq 0})^n$ and that $\Delta\cap \lbrace \bfx\in\mathbb{Z}^n \mid \bfx_i=0\rbrace\neq \emptyset$, for every $i=1,\ldots,n$.
Thus for any matrix $N\in\mathcal{M}_{n\times n}(\mathbb{R})$ with maximum norm $\norm{N}\leq 1$, we have $N\,\Delta\subset n\diam_1(\Delta)B_n$.

Let $E$ be the ellipsoid of smallest volume containing $\Delta$.
Let $M\in\GL(\mathbb{R})$ and $\bfv\in\mathbb{R}^n$ be the corresponding matrix and vector such that $M\,B_n+\bfv=E$.
In particular, we have that $B_n$ is the ellipsoid of smallest volume containing $M^{-1}\Delta-\bfv$.

Next, take $M_l\in\GL(\mathbb{Z})$ and $\bfv_l\in\mathbb{Z}^n$ to be integer approximations of $lM^{-1}$ and $l\bfv$ respectively; that is
\[
\begin{array}{l}
lM^{-1}=M_l+M',\quad \norm{M'}<1;\\
\\
l\bfv=\bfv_l+\bfv',\quad \norm{\bfv'}<1;
\end{array}
\]
where $\norm{\cdot}$ denotes the maximum norm.
Take
\[
\bftau_l=(l+n\diam_{1}(\Delta)+n)\mathbf{1}-\bfv_l
\mbox{.}
\]

We proceed to bound the domain where $M_l\Delta+\bftau_l$ lies. To do that, we develop as follows:
\[
M_l\Delta+\bftau_l=l(M^{-1}\Delta -\bfv)-M'\Delta+\bfv'+(l+n\diam_{1}(\Delta)+n)\mathbf{1}
\mbox{.}
\]
We have that $l(M^{-1}\Delta -\bfv)\subset lB_n$ and $\bfv'\in nB_n$.
Moreover, since $\norm{M'}\leq 1$, $M'\Delta \subset n\diam_1(\Delta)B_n$.
Putting it all together we obtain
\begin{multline*}
M_l\Delta+\bftau_l
\subset (l+n\diam_1(\Delta)+n)B_n+(l+n\diam_1(\Delta)+n)\mathbf{1}\\
\subset 2n(l+n\diam_1(\Delta)+n)\mathcal{S}_n
\mbox{.}
\end{multline*}

It is left to prove (\ref{ineq}).
Using John's result (Theorem \ref{John}), we have that $E\subset n\Delta$.
Therefore
\[
\vol_n(E)=\omega_n\det(M)\leq n^n\vol_n(\Delta)\mbox{.}
\]
In addition, by our choice of $M_l$, we have that
\[
\lim\limits_{l\rightarrow +\infty}l^{-n}\det(M_l)=\det(M)^{-1}
\mbox{.}
\]
Inequality (\ref{ineq}) follows directly from here.

\end{proof}

This proposition allows us to extend the result in Theorem \ref{teo}.
First, we introduce a different notion of degree.
Let $\Delta\subset\mathbb{R}^n$ be a convex polytope with integral vertexes
and let $\psi_\Delta:\Gm^n\rightarrow\mathbb{P}^{N-1}$, with $N=\card(\Delta\cap\mathbb{Z}^n)$,
be the morphism mapping $\bft\mapsto(\bft^{\bfa})_{\bfa\in\Delta\cap\mathbb{Z}^n}$.
For any variety $V\subset\Gm^n$, we define the $\Delta$-degree of $V$ as
\[
\deg_{\Delta}(V)=\deg (\psi_\Delta(V))
\mbox{.}
\]
In particular, if $V$ is of dimension $d$
we can find general polynomials $f_1,\ldots,f_d$ with support $\supp(f)\subset\Delta$ and 
\[
\deg_{\Delta}(V)=\card(V\cap Z(f_1,\ldots, f_d))
\mbox{,}
\]
where $Z(f_1,\ldots, f_d)$ is the subvariety of $\Gm^n$ defined by $f_1,\ldots,f_n$.

\vspace*{2mm}

The following result is a general statement which easily implies Ruppert's and Aliev-Smyth's conjectures.

\begin{theorem14}
Let $V\subset\Gm^n$ be a variety of dimension $d$, defined by polynomials with support in the convex polytope $\Delta$.
For $j=0,\ldots,d$\, let $\Vtorss^j$ be the $j$-equidimensional part of $\Vtors$.
Then
\[
\deg_{\Delta}(\Vtorss^j)\leq \widetilde{c}_{n,j}\vol_n(\Delta)
\]
for every $j=0,\ldots,d$\,, where
\[
\widetilde{c}_{n,j}=((2n-1)(n-1)(2^{2n}+2^{n+1}-2))^{(n-1)(n-j)} 2^n n^{2n}\omega_n^{-1}
\mbox{,}
\]
and $\omega_n$ is the volume of the $n$-sphere.
\end{theorem14}

\begin{proof}
Let $M_l$ and $\bftau_l$ be as in Proposition \ref{pconvex}.
Let $\varphi:\Gm^n\rightarrow\Gm^n$ be the endomorphism defined by $M_l$,
mapping $\bfx\mapsto \bfx^{M_l}$.
By Proposition \ref{pconvex}, for any polynomial $f$ with support $\supp(f)\subset\Delta$, we have
\[
\supp(f(\bfx^{M_l})\cdot\bfx^{\bftau_l})\subset 2n(l+n\diam_1(\Delta)+n)\mathcal{S}_n
\mbox{.}
\]

Let $W=\varphi^{-1}(V)$. Since $V$ is defined by polynomials supported in $\Delta$,
$W$ can be defined by polynomials of degree at most $2n(l+\diam_1(\Delta)+n)$.
Morover, for every $j=0,\ldots,d$\,, we have that $\varphi^{-1}(\Vtorss^j)=\Wtorss^j$.

Fix $j$. By Theorem \ref{teo} we have the following inequality:
\begin{equation}\label{eqTEO1}
\deg(\Wtorss^j)\leq c_{n,j} 2n(l+n\diam_1(\Delta)+n)^{n-j}
\mbox{.}
\end{equation}

We proceed to compare $\deg(\Wtorss^j)$ and $\deg_\Delta(\Vtorss^j)$.
To do this, take $f_1,\ldots,f_j$ generic polynomials such that $\supp(f_i)\subset\Delta$ and
\[
\deg_\Delta(\Vtorss^j)=\card(\Vtorss^j\cap Z(f_1,\ldots ,f_j))
\mbox{.}
\]
Since $f\circ \varphi^{-1}= f(\bfx^{M_l})$ and $f(\bfx^{M_l})\cdot\bfx^{\bftau_l}$ define the same variety,
\[
\varphi^{-1}(\Vtorss^j\cap Z(f_1,\ldots ,f_j))=
\Wtorss^j\cap Z(f_1(\bfx^{M_l})\cdot\bfx^{\bftau_l},\ldots,f_j(\bfx^{M_l})\cdot\bfx^{\bftau_l})
\mbox{.}
\]
Since $f_1,\ldots,f_j$ are generic, by Bézout we have that
\begin{multline*}
\card(\Wtorss^j\cap Z(f_1(\bfx^{M_l})\cdot\bfx^{\bftau_l},\ldots ,f_j(\bfx^{M_l})\cdot\bfx^{\bftau_l}))
\\ \leq \deg(\Wtorss^j)2n(l+n\diam_1(\Delta)+n)^j
\mbox{.}
\end{multline*}
Since $\card(\varphi^{-1}(\bfx))=\det(M_l)$ for any point $\bfx\in\Gm^n$, we have
\begin{multline*}
\det(M_l)\deg_\Delta(\Vtorss^j)
=\card(\varphi^{-1}(\Vtorss^j\cap Z(f_1, \ldots ,f_j)))
\\ \leq \deg(\Wtorss^j)2n(l+n\diam_1(\Delta)+n)^j
\mbox{.}
\end{multline*}
Combining this inequality with (\ref{eqTEO1}), we obtain
\begin{equation}\label{eqTEO2}
\deg_\Delta(\Vtorss^j)\leq c_{n,j}(2n(l+n\diam_1(\Delta)+n))^{n}\det(M_l)^{-1}
\mbox{.}
\end{equation}
By Proposition \ref{pconvex}
\[
\lim\limits_{l\rightarrow +\infty}l^n\det(M_l)^{-1}\leq n^n\omega_n^{-1}\vol_n(\Delta)
\mbox{.}
\]
Hence, taking the limit for $l\rightarrow +\infty$ in (\ref{eqTEO2}), we get
\[
\deg_\Delta(\Vtorss^j)\leq
c_{n,j} 2^n n^{2n}\omega_n^{-1}\vol_n(\Delta)
\mbox{.}
\]
\end{proof}

Let $V\subset\Gm^n$ be a hypersurface given by a polynomial $f\in\overline{\mathbb{Q}}[X_1,\ldots,X_n]$. If we take $\Delta=[0,d_1]\times\cdots\times[0,d_n]$ where $(d_1,\ldots,d_n)$ is the multidegree of $f$, Theorem \ref{TEO} for $j=0$ proves Ruppert's conjecture (Conjecture \ref{ConjR}).
A slightly better result can be obtained applying Theorem \ref{teo} directly to the hypersurface $W$
defined by $f(x_1^{D_1},\ldots,x_n^{D_n})=0$, with $D_i=d_1\cdots d_n/d_i$.
In this case we obtain that
\[
\Vtorss^0\leq n^n c_{n,0} d_1\cdots d_n
\mbox{.}
\]

On the other hand, if we take $\Delta=\conv(\supp(f))$, Theorem \ref{TEO} proves Aliev and Smyth's conjecture (Conjecture \ref{ConjAS}).

Moreover, by comparing the bound in Theorem \ref{teo} and the bound in Theorem \ref{TEO} for the dense case
($\conv(\supp(f))=\deg(f)\mathcal{S}_n$),
we can observe that they differ only by a multiplying factor $2^n n^{2n}\omega_n$ which does not increase the order of the constant.

\section{Example}

We build an example to show that
the exponent of $d$ in Theorem \ref{teo} is optimal and the constant $c_n$ must depend on $n$.
To do this, we need first a result of Conway and Jones on vanishing sums of roots of unity.
Let us define, for $m\in\mathbb{N}_{>0}$,
\[
\Psi(m):=2+\sum_{
\begin{smallmatrix}
p\mid m\\
p\mbox{\scriptsize{ prime}}
\end{smallmatrix}}
(p-2)
\mbox{.}
\]

The result of Conway and Jones is the following.
\begin{theorem}[\cite{CJ76}]\label{CJ}
Let $\xi_1,\ldots,\xi_N$ be $N$ roots of unity.
Let $a_1,\ldots,a_N\in\mathbb{Z}$ such that
$S=a_1\xi_1+\ldots +a_N\xi_N=0$
is minimal (i.e.\ there are no non-trivial vanishing subsums of $S$).
Let 
\[
m=\lcm (\ord(\xi_2/\xi_1),\ldots,\ord(\xi_N/\xi_1))
\mbox{.}
\]
Then $m$ is squarefree and $\Psi(m)\leq N$.
\end{theorem}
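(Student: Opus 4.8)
The plan is to normalise the vanishing relation and then establish the two assertions --- that $m$ is squarefree and that $\Psi(m)\le N$ --- by induction on the number of distinct primes dividing $m$, the key tool throughout being the comparison of the cyclotomic field $\mathbb{Q}(\zeta_m)$ with its subfield $\mathbb{Q}(\zeta_{m/p})$ for a prime $p\mid m$. First I would normalise: multiplying the relation $S=0$ by $\xi_1^{-1}$ and combining any equal terms, one may assume $\xi_1=1$, the $\xi_i$ pairwise distinct, the $a_i$ still nonzero integers, and $m=\lcm_i\ord(\xi_i)$, minimality being clearly preserved. Thus all $\xi_i$ are $m$-th roots of unity, and I write $g=\zeta_m$ and $\xi_i=g^{k_i}$.

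For squarefreeness, suppose $p^2\mid m$. The subgroup $\mu_{m/p}=\langle g^p\rangle$ has index $p$ in $\mu_m$, and a degree computation for cyclotomic fields gives $[\mathbb{Q}(\zeta_m):\mathbb{Q}(\zeta_{m/p})]=p$; since $g$ is a root of $x^p-g^p\in\mathbb{Q}(\zeta_{m/p})[x]$, this is its minimal polynomial over $\mathbb{Q}(\zeta_{m/p})$, so $1,g,\dots,g^{p-1}$ is a $\mathbb{Q}(\zeta_{m/p})$-basis of $\mathbb{Q}(\zeta_m)$. Grouping the terms of $S$ according to $k_i\bmod p$ yields $S=\sum_{j=0}^{p-1}g^jU_j$ with $U_j\in\mathbb{Z}[\zeta_{m/p}]$, whence $S=0$ forces $U_j=0$ for every $j$; each nonempty group is then a proper vanishing subsum of $S$, so by minimality all $\xi_i$ lie in one coset of $\mu_{m/p}$, and since $\xi_1=1$ that coset is $\mu_{m/p}$ itself, giving $m\mid m/p$, a contradiction. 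Hence $m$ is squarefree.

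For $\Psi(m)\le N$ I would induct on the number $r$ of distinct primes dividing $m$ (the case $m=1$ does not occur, as it would force $a_1=0$). If $r=1$ then $m=p$ and the $\xi_i$ are distinct $p$-th roots of unity; since any $p-1$ of the $p$ distinct $p$-th roots of unity are linearly independent over $\mathbb{Q}$, a vanishing relation with all coefficients nonzero must involve all of them, so $N=p=\Psi(p)$. If $r\ge 2$, let $p$ be the largest prime dividing $m$; squarefreeness gives $m=pt$ with $p\nmid t$ and $\mu_m=\mu_p\times\mu_t$. Writing $\xi_i=\zeta_p^{j_i}\eta_i$ with $\eta_i\in\mu_t$, $0\le j_i<p$, and grouping, $S=\sum_{j=0}^{p-1}\zeta_p^jT_j$ with $T_j\in\mathbb{Z}[\zeta_t]$; since $1,\zeta_p,\dots,\zeta_p^{p-2}$ is a $\mathbb{Q}(\zeta_t)$-basis of $\mathbb{Q}(\zeta_m)$ and $\zeta_p^{p-1}=-(1+\zeta_p+\cdots+\zeta_p^{p-2})$, the relation $S=0$ is equivalent to $T_0=T_1=\cdots=T_{p-1}=:c\in\mathbb{Q}(\zeta_t)$. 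If $c=0$, each coset-subsum $\zeta_p^jT_j$ vanishes, and minimality together with $\xi_1=1$ forces all $\xi_i\in\mu_t$, so $m\mid t$, impossible. Hence $c\ne 0$, every class $I_j=\{i:j_i=j\}$ is nonempty, and $N=\sum_{j=0}^{p-1}|I_j|\ge p$. To upgrade this to $N\ge\Psi(t)+(p-2)=\Psi(m)$, I would exploit the identities $T_0-T_j=0$ ($j=1,\dots,p-1$), which are vanishing $\mathbb{Z}$-combinations of $t$-th roots of unity: decomposing each into its minimal components, whose moduli divide $t$, and applying the inductive hypothesis, one bounds the combined sizes of the classes $I_j$ from below and combines this with the fact that all $p$ classes are occupied; in the degenerate case where no class has more than one element, the equalities $a_i\eta_i=c$ force every $\xi_i$ to be $\pm$ a $p$-th root of unity, so $m\mid 2p$ and the bound is immediate.

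The main obstacle is precisely this last step when $c\ne 0$: a given $t$-th root $\eta$ may occur in several of the $p$ classes, so forming the differences $T_0-T_j$ produces cancellation; one must check that the sub-relations extracted are genuinely minimal, keep careful track of their moduli, and verify that the lengths of these pieces, summed together with the count of occupied classes, reproduce exactly $\Psi(m)$ rather than a weaker estimate. This combinatorial bookkeeping over the coset structure of $\mu_m$ is the technical heart of the Conway--Jones argument; the normalisation and the field-theoretic basis computations above are routine by comparison.
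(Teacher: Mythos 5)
The paper does not prove this statement at all: it is quoted verbatim from Conway and Jones \cite{CJ76}, so there is no internal proof to compare with, and your proposal has to stand on its own as a proof of their theorem. Judged that way, the normalisation, the squarefreeness argument (minimal polynomial $x^p-\zeta_m^p$ over $\mathbb{Q}(\zeta_{m/p})$ when $p^2\mid m$, grouping by residues mod $p$, minimality forcing a single class), the base cases, and the reduction of the inductive step to $T_0=T_1=\cdots=T_{p-1}=c$ with the easy subcases $c=0$ and ``all classes singletons'' are correct and standard. But everything up to that point only yields Mann-type information ($m$ squarefree, every class nonempty, hence $N\ge p$); the actual content of the Conway--Jones bound is the inequality $N\ge (p-2)+\Psi(t)$ in the remaining case $c\neq 0$, and there your argument stops at a plan, as you yourself say.

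The gap is genuine, not cosmetic, because the plan as sketched does not obviously close. Decomposing the relations $T_0-T_j=0$ into minimal vanishing subsums and invoking the inductive hypothesis faces three concrete problems: (i) cancellation --- a root $\eta$ occurring in both $I_0$ and $I_j$ may drop out of $T_0-T_j$, so the terms counted by induction need not inject into your original $N$ terms; (ii) multiple counting --- each term of $T_0$ reappears in all $p-1$ differences, so summing inductive lower bounds over $j$ overcounts; (iii) coverage --- one must guarantee that for every prime $q\mid m$, $q\neq p$, some minimal component actually has modulus divisible by $q$, and since $\Psi$ is subadditive only in the weak sense $\Psi(\lcm(t_1,t_2))\le\Psi(t_1)+\Psi(t_2)-2$, losing a prime or a piece degrades the bound from $\Psi(m)$ to something weaker. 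Handling (i)--(iii) simultaneously is exactly the combinatorial core of Conway and Jones's Theorem~5, and it is the part of the proof that is missing here; until it is supplied, the proposal proves squarefreeness and $N\ge p$ for the largest prime $p\mid m$, but not $\Psi(m)\le N$.
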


\begin{example}
First of all, let $p_1,\ldots,p_n$ be $n$ different primes such that $p_i>2n$ for all $i=1,\ldots,n$.
In particular, we will have that $\Psi(p_{j} p_{k})> 2n$ for every different $j$ and $k$.
Let $W$ be the variety defined by
\[
g(X_1,\ldots,X_n)= \zeta_{p_1}+\cdots+\zeta_{p_n}+X_1+\ldots+X_n=0
\mbox{.}
\]
We claim that if $\bfomega\in \Wtorss$, then
\begin{equation*}\label{condition}
\lbrace\omega_1,\ldots,\omega_n\rbrace=\lbrace -\zeta_{p_1},\ldots,-\zeta_{p_n}\rbrace
\mbox{.}
\end{equation*}

\begin{proof}
Take $\bfomega\in\mu_\infty^n$ such that $g(\bfomega)=0$
and consider
\[
S=g(\bfomega)=\zeta_{p_1}+\cdots+\zeta_{p_n}+\omega_1+\cdots+\omega_n
\mbox{.}
\]

Let $S=S_1+\cdots +S_t$ be a decomposition of $S$ in minimal subsums,
such that $S_i=0$ for every $i=1,\ldots, r$.
If, up to reordering, $t=n$ and $S_i=\zeta_{p_i}+\omega_i$, we are done.

Suppose that this is not the case.
Hence, there exists a minimal vanishing non-trivial subsum $S'$ with at least three elements.
Without loss of generality, we can assume that $S'$ has $\zeta_{p_j}$ and $\zeta_{p_k}$ as summands, for some different $j$ and $k$.
We take $m'$ to be the equivalent of $m$ in Theorem \ref{CJ} with respect to $S'$, we have that $p_j p_k\mid m'$, so $\Psi(m')\geq \Psi(p_j p_k)>2n$.
On the other hand, since $S'$ is a minimal sum with less than $2n$ summands,
Theorem \ref{CJ} states $\Psi(m')< 2n$.
This contradicts the fact that $\Psi(m')>2n$.
Therefore, there is no vanishing subsum of $S$ and our claim is proved.
\end{proof}

Since our claim holds, we have
\[
\Wtorss=\left\lbrace
\bfomega\in\Gm^n \mid \;
\lbrace\omega_1,\ldots,\omega_n\rbrace=\lbrace -\zeta_{p_1},\ldots,-\zeta_{p_n}\rbrace
\right\rbrace
\mbox{.}
\]
So $\Wtors=\Wtorss$ is a discrete ensemble with $n!$ elements.

It is enough to take $V=[d]^{-1}(W)$, which is the hypersurface in $\Gm^n$ defined by
\[
f(X_1,\ldots,X_n)=\zeta_{p_1}+\cdots+\zeta_{p_n}+X_1^d+\ldots+X_n^d=0
\mbox{.}
\]
Then, we have that $\Vtors=[d]^{-1}(\Wtors)$ which is the following discrete ensemble:
\[
\Vtors=\left\lbrace
\bfomega\in\Gm^n\mid\;
\lbrace \omega_1^d,\ldots,\omega_n^d\rbrace
=
\lbrace -\zeta_{p_1},\ldots,-\zeta_{p_n}\rbrace
\right\rbrace
\mbox{.}
\]
Hence, the number of isolated torsion points in $V$ is $n!\, d^n$.
In this case, this is the number of maximal torsion cosets in $V$.
\end{example}

By considering the homomorphism $[d_1,\ldots,d_n]:\Gm^n\rightarrow\Gm^n$, mapping $(t_1,\ldots,t_n)\mapsto(t_1^{d_1},\ldots,t_n^{d_n})$, instead of simply $[d]$, we would obtain the variety $V$ defined by
\[
f(x_1,\ldots,x_n)=\zeta_{p_1}+\cdots+\zeta_{p_n}+X_1^{d_1}+\ldots+X_n^{d_n}=0
\mbox{.}
\]
In this case, the number of isolated torsion points in $V$ is $n!\,d_1\cdots d_n$ which shows the effectiveness of the bound in terms of the multidegree.

\bibliographystyle{amsalpha}
\bibliography{biblio}

\end{document}